\documentclass[reqno,10pt]{amsart}
\usepackage{svg}
\usepackage{amsmath}
\usepackage{amssymb}
\usepackage{amsfonts}
\usepackage{mathtools}
\usepackage{comment}
\usepackage{graphicx}
\usepackage[abs]{overpic}
\usepackage{caption}
\usepackage{subcaption}
\usepackage{wrapfig}
\usepackage{float}
\usepackage{graphicx}
\usepackage{physics}
\usepackage{esint} % ('\fint' command)
\usepackage{braket} % ('\set' command)
\usepackage[colorinlistoftodos]{todonotes}
\usepackage{enumitem}

\newcommand{\mres}{\mathbin{\vrule height 1.6ex depth 0pt width 0.13ex\vrule height 0.13ex depth 0pt width 0.8ex}}
\newcommand{\Wtrunc}{V}
\newcommand{\WtruncLI}{V_0}
\newcommand{\Wtruncc}{\sqrt{\mathfrak{W}}}
\newcommand{\Wtrunccc}{\mathfrak{W}}

\usepackage{tikz}

\definecolor{blue_links}{RGB}{13,0,180} 

\usepackage{hyperref}
\hypersetup{
    colorlinks=true, %set true if you want colored links
    linktoc=all,     %set to all if you want both sections and subsections linked
    linkcolor=blue_links,  %choose some color if you want links to stand out
    citecolor=blue_links,
    urlcolor=blue_links,
}
\usepackage{mathtools}
\usepackage{xcolor}
\usepackage{natbib}

\newtheorem{theorem}{Theorem}[section]
\newtheorem{lemma}[theorem]{Lemma}
\newtheorem{proposition}[theorem]{Proposition}

\newtheorem{definition}[theorem]{Definition}

\newtheorem{remark}[theorem]{Remark}

\newtheorem*{theorem*}{Theorem}

\newcommand{\N}{\mathbb{N}}

\newcommand{\R}{\mathbb{R}}

\newcommand\ep{\varepsilon}

\def\eps{\varepsilon}

\def\weaklystar{\stackrel{*}{\rightharpoonup}}
\def\dist{\operatorname{dist}}

\def\XXint#1#2#3{{\setbox0=\hbox{$#1{#2#3}{\int}$}
\vcenter{\hbox{$#2#3$}}\kern-.5\wd0}}

\usepackage{color}

\numberwithin{equation}{section}

\newcommand{\Landau}{\mathcal{O}}
\renewcommand{\H}{\mathcal{H}}

\renewcommand{\epsilon}{\varepsilon}
\begin{document} 

\title[Solid-solid phase transitions  with   space-dependent wells]{Solid-solid phase transitions  with  space-dependent wells}
\author[E. Davoli]{Elisa Davoli} 
\address[Elisa Davoli]{Institute of Analysis and Scientific Computing, TU Wien, Wiedner Hauptstra\ss e 8-10, 1040 Vienna, Austria}
\email{elisa.davoli@asc.tuwien.ac.at}
\author[J. Deutsch]{Jakob Deutsch} 
\address[Jakob Deutsch]{Institute of Analysis and Scientific Computing, TU Wien, Wiedner Hauptstra\ss e 8-10, 1040 Vienna, Austria}
\email{jakob.deutsch@asc.tuwien.ac.at}
\author[M. Friedrich]{Manuel Friedrich} 
\address[Manuel Friedrich]{Institute of Analysis, Johannes Kepler University, Altenberger Stra\ss e 69, 4040 Linz, Austria.}
\email{manuel.friedrich@jku.at}
\author[K. Stinson] {Kerrek Stinson} 
\address[Kerrek Stinson]{Department of Mathematics, University of Utah, 155 South 1400 East, JWB 233
Salt Lake City, UT 84112, USA.}
\email{kerrek.stinson@math.utah.edu}

\subjclass[2010]{49J45, 70G75, 74B20, 74N15 , 74G65}
\keywords{Solid-solid phase transitions, space-dependent wells, sharp-interface limit, Gamma-convergence}

 %\addbibresource{references.bib}

\begin{abstract}  
We investigate a  second-order  Modica-Mortola functional  of the form  
\[
    E_\eps[u] := \int_\Omega \frac{1}{\eps} W(x, \nabla u) + \eps|\nabla^2 u|^2 \, dx,
\]
which models solid--solid phase transitions in heterogeneous media. We neglect the assumption of frame indifference  in the elastic energy density $W$   while allowing for   space-dependent, pointwise-compatible wells. Under suitable assumptions on  $W$ and regularity conditions on the associated rank-one connection, we prove that $E_\varepsilon$ $\Gamma$-converges to a local interfacial energy  defined for suitable laminate-type configurations.   \\
\end{abstract}

\maketitle
%\tableofcontents

\section{Introduction}\label{intro}
The theory of solid-solid phase transitions is a  broad field of research, explaining a wide range of  phenomena
 in areas such as
shape-memory alloys,  laminated structures in applied chemistry, or  lithium-ion evolution in batteries.    The   past few years have witnessed the emergence of new fields of applications such as the modeling of compositionally graded materials, or the manufacturing of programmable metamaterials (see, e.g., \cite{Zhang,Meng}), in which elastic phases vary with position.    While   the variational analysis of solid-solid phase transitions with \emph{fixed wells} has been by now extensively developed,  settings with \emph{space-dependent wells} remain largely unexplored.  

A particularly natural instance of a phenomenon  involving space-dependent wells   arises in the study of prestrained materials coupled with solid-solid phase transitions (see, e.g., \cite{neukamm} and the references therein for an overview of recent mathematical developments on prestrained elasticity). Prestrain may originate from a variety of physical mechanisms and often gives rise to novel functionalities \cite{Ge}, allowing, for example, controlled shape transformations. From a mathematical perspective, if the admissible elastic phases are represented by two fixed wells $E_1$ and $E_2$, then, in the absence of prestrain,  ground-state  deformations would satisfy the differential inclusion
\begin{equation*}
    \nabla y(x)\in\{E_1,E_2\}.
\end{equation*}
If the material is prestrained by a  matrix   field $U$, however, its elastic response is measured  relative  to the local prestrain, and the deformation $y$ must  satisfy  
\begin{equation*}
    \nabla y(x)\in\{E_1U(x),E_2U(x)\}.
\end{equation*}
In other words, even when the underlying phases are fixed, prestrain naturally transforms a classical solid-solid phase  transition problem into one involving \emph{space-dependent wells}.

The purpose of this paper is to take a first step in the rigorous variational analysis of solid-solid phase transitions with space-dependent wells.  More  precisely, for a bounded domain $\Omega\subset \R^d$ and  deformations $u \in H^2(\Omega;\R^d)$, we consider a free energy of the form
\begin{equation}
\label{eq:prototype}
 \frac{1}{\eps}\int_{\Omega}W(x,\nabla u)\, dx + \eps \int_{\Omega}|\nabla^2 u|^2 \, dx,
\end{equation}
where $W(x,\cdot)$ vanishes only if $\nabla u(x) \in \{ A(x),B(x)\}$, $A$ and $B$ are sufficiently regular functions  with values in $\R^{d \times d}$ representing the \emph{wells}, and $\eps$ denotes the length scale of the transition layer between the phases. From a modeling point of view, $A$ and $B$ can be seen as locally stress-free configurations of a crystalline body that is free to switch between different target strains. In \eqref{eq:prototype}, the first term corresponds to   a nonlinear elastic contribution, neglecting frame indifference  for  simplicity. Roughly speaking, it  forces the gradient of energetically favorable deformations to lie  pointwise close to one of the prescribed wells $A$ and $B$.   By itself,  this elastic term would lead to unphysical effects, for highly oscillatory sequences might exhibit very low energy. The role of the second contribution in \eqref{eq:prototype} is to prevent this phenomenon, by introducing a competing effect  through  a higher-order surface penalization that  excludes too many phase changes. Whereas the setting of variable wells in models for phase transitions has attained considerable attention in the liquid-liquid case starting from \cite{bouchitte, sternberg}, this paper constitutes a first step towards a theory in the setting of solid-solid phase transformations.  Our  main results    are a description  of the asymptotic behavior of  \eqref{eq:prototype} in the sense of $\Gamma$-convergence, as $\eps \to 0$, along with a characterization of the  limit   energy's  domain consisting of suitable laminate-type configurations.  

 Before we  describe the contents   of the paper, let us briefly review the literature on variational theories for solid-solid phase transitions with fixed wells. In  that  setting, stress-free solutions for energy functionals with multiwell nonconvex densities were first characterized in the seminal work of Ball and James \cite{ball-james1987} and, accounting for frame indifference, by Dolzmann and M\"uller \cite{dolzmann-mueller95} (see also \cite{kirchheim98}). 
   Sharp-interface limits of solid-solid phase field models were first studied by Conti, Fonseca, and Leoni in \cite{contifonsecaleoni2002}, in the absence of frame indifference. The aforementioned result was recently extended in \cite{deutsch25}, where the limiting interfacial energy was characterized in the planar setting via periodic recovery sequences and  without  symmetry assumptions on the potential. In two-dimensions, Conti and Schweizer tackled the frame-indifferent  case  in the series of papers \cite{contischweizerLin06,contischweizer06,contischweizerImp06}. Later,  \cite{contischweizer06} was extended to higher dimensions by including an anisotropic penalization \cite{davolifriedrich20}. Also the passage from nonlinear-to-linearized models has been analyzed in this  setting \cite{davolifriedrich25}. An extension to the modeling of surfactants was carried out in \cite{cicaleseheilmann25}, and a nonlocal variant was analyzed in \cite{dalmaso18}.
A  related   solid-solid phase transition model coupling chemistry and elastic misfit in lithium-ion batteries was studied in \cite{stinson21}. For some related results on microscopic modeling of martensitic transformations we refer to \cite{kitavsev.luckhaus.rueland15, kitavtsev.luckhaus.rueland17}.

In the setting of space-dependent wells, lower-order  liquid-liquid  models   
were first studied  in the scalar case in \cite{bouchitte, sternberg, ZunigaSternberg2016},  and recently in the vectorial setting in \cite{CristoferiGravina2021} (see also \cite{davolitasso25} for a nonlocal counterpart, as well as \cite{CrisFonGa23, CrisFonGa25, CrisFon19, CrisFonGa22} for a homogenization analysis). 
Here, the energies are of Modica--Mortola-type for a vectorial parameter $c\colon \Omega\to \R^d$ (possibly describing a multi-species concentration)  and take the form    
\begin{equation}\label{eqn:heteroMM}
\frac{1}{\eps} \int_\Omega W(x,c)\,dx + \eps \int_\Omega|\nabla c|^2\,dx, 
\end{equation}
where $W(x,\cdot)$ vanishes only if $c(x) \in \{a_1(x), \ldots, a_n(x)\}$, for suitable vector-valued functions $a_1,\dots,a_n$.

The results in \cite{CristoferiGravina2021} form the starting point of our analysis. Let us  however  stress that the setting of solid-solid phase transformations differs substantially from the study of classical first-order Modica--Mortola functionals, as in the present setting  all  constructions must   be \emph{curl-free}. In the vectorial context of \eqref{eqn:heteroMM}, this amounts to the requirement that $c$ satisfies ${\rm curl}\, c = 0$, i.e., that $c = \nabla u $ for some function $u$. In the limit $\eps \to 0$, this forces finite-energy sequences to converge to laminate-type sets of finite perimeter.
Furthermore, in the  liquid-liquid  setting of \cite{CristoferiGravina2021}, to construct recovery sequences, one may use the fact that the energetically optimal phase transitions are  \emph{one-dimensional geodesics}. In contrast, for  solid-solid phase transitions,  cell energies can be  genuinely  multidimensional, cf.\ \cite[Section 8]{contifonsecaleoni2002}. To render   the   analysis of the heterogeneous curl-free setting tractable, we will impose structural assumptions on the density $W$, locally enforcing one-dimensionality of the optimal profiles.  
 Analogous  to the work \cite{contifonsecaleoni2002}, we will  neglect frame indifference for simplicity.

We now describe our results in more detail.   
Under standard quadratic growth and well-separatedness assumptions (see Subsection \ref{subs:ass}), we consider the second-order heterogeneous Modica-Mortola functionals in \eqref{eq:prototype}. We assume the two wells $A$ and $B$ to be the gradients of   corresponding $C^2$-functions and to be  pointwise  rank-one connected. From a mathematical point of view, this   ensures   both admissibility of pure phases at the diffuse-interface level, and the formation of phase transitions in the sharp-interface limit. Our  results are  threefold:

First,  in   Theorem \ref{mainthm:compactness} we prove that sequences with equibounded energies \eqref{eq:prototype} are precompact in $H^1$, and that any  limiting  function has a gradient with bounded variation  which  locally  takes values in the    two phases.  To be precise, we show that limiting deformations belong to the space \begin{equation*}
    X(\Omega) : = \big\{u\in H^1(\Omega;\R^d) \colon \,  \nabla u \in BV(\Omega;\{A(\cdot),B(\cdot)\})\big\},
\end{equation*}
where 
\begin{equation*}
    BV(\Omega;\{A(\cdot),B(\cdot)\}) : = \big\{M \in BV(\Omega;\R^{d\times d}) \colon \, M(x) \in \{ A(x),B(x)\} \text{ for a.e.\ }x\in \Omega\big\}.
\end{equation*}
The proof of this result relies on the combination of a classical Young-measure approach with an estimate on the gradient of suitable truncations of a path-dependent geodesic distance.  The  latter quantity measures, roughly speaking, how much energy is spent moving between any two matrices along a prescribed path in $\R^{d\times d}$.

Our second contribution is in the spirit of the seminal rigidity analysis in \cite{ball-james1987}. After showing in Lemma~\ref{lem:rep} that the assumptions on the two wells yield a local representation of $A-B$ as   the outer product of two vectors, one of which is the gradient of a   suitable  $C^2$-function    $\gamma$, we  connect the geometry of the jump set of the limiting gradient  to the level sets of $\gamma$.  Indeed, by  a careful application of the Constant Rank Theorem, we show in Theorem \ref{thm:characterisation-of-piecewise-BV-space} that the set $\{x \in \Omega : \nabla u(x) = A(x)\}$ has finite perimeter, that the normal to the jump set of the limiting gradient is determined by the rank-one connection,  and that the jump set can be locally characterized as the union of level sets of $\gamma$.

Our third and last main result, see Theorem \ref{thm:main}, is a $\Gamma$-convergence analysis for the energies in \eqref{eq:prototype} in the sharp-interface limit, as the parameter $\eps$ tends to zero. First, a generic lower bound for the asymptotic behavior of the functionals in \eqref{eq:prototype} is identified by a classical blow-up  approach \cite{FonsecaMueller1992}.    A priori, the  resulting    cell energy (see Definition \ref{def:nonlocal-cell}) is multidimensional.  Then, by  employing slicing techniques combined with suitable Lipschitz estimates  
 for the space-dependent 
geodesic distance (see Section \ref{sec:geodesics-disc}), we can always recover a (possibly suboptimal) lower bound in terms of a geodesic cell energy. To recover the geodesic lower bound in the proof of the $\Gamma$-$\limsup$ inequality, we  impose an assumption on the potential $W$ guaranteeing  one-dimensional optimal profiles  (see \eqref{eqn:W1Dstructure}),   and we  postulate the existence of a globally defined function $\gamma$ which describes the rank-one connection and has level sets that  can be flattened   via a $C^2$-diffeomorphism. 
This diffeomorphism can be extended to the ambient Euclidean space and allows us to reduce the construction of the recovery sequence to  the case of   hyperplanes. The construction of the recovery sequence itself is  an inherently local problem, in the sense that we glue together a selection of one-dimensional optimal profiles identified first on cylinders contained in the diffeomorphed domain. The construction for the original domain is then achieved by relying on a series of auxiliary lemmas (cf.\ Subsection \ref{sec71}) allowing to `pull back' the `flattened' recovery sequence to the reference configuration $\Omega$.

 There are two primary difficulties in our analysis. The first is localizing the cell-energy coming from the $\liminf$ argument. For this, we generalize the arguments of \cite{CristoferiGravina2021, CrisFonGa23,ZunigaSternberg2016} to show that there exist geodesics with uniformly bounded path length. In particular, our analysis applies to path-dependent geodesic distances that are not invariant under reparametrization.
The second difficulty is constructing the locally optimal recovery sequence. To construct a recovery sequence for a given interface in $\Omega$, existing approaches \cite{contischweizer06,davolifriedrich20} embed the flat interface into a larger auxiliary rectangular interface and extract an optimal sequence from  the cell-energy formula for this simpler geometry. Using the spatial-homogeneity of the potential $W$ in these settings, standard arguments show that this sequence on the auxiliary rectangle is optimal on all subdomains, and in particular it defines an appropriate recovery sequence when restricted to the interface contained in $\Omega$. Conti, Fonseca, and Leoni \cite{contifonsecaleoni2002} adopt a similar approach, but show how these ideas can be extended to more general geometries. In contrast, our approach uses the cell-energy in a much more local fashion to construct energetically optimal sequences. 
With this strategy, we can directly treat complex geometries for $\Omega$ (cf. \cite{contifonsecaleoni2002}).
Moreover, we believe that this localization argument for recovery sequences may be extended beyond the setting of one-dimensional optimal profiles, but, due to technical obstacles coming from the interaction of symmetry conditions and diffeomorphisms, we do not pursue this here.

The structure of the paper is the following. Our main results and our precise assumptions are detailed in Section \ref{sec:main}. Limit laminate-type configurations are characterized in Section \ref{sec:limit-laminates}. In Section \ref{sec:compactness}, we carry out our compactness analysis. The $\Gamma$-convergence result, cf.\ Theorem \ref{thm:main}, is proven in Sections \ref{sec:liminf-interm} and \ref{sec:limsup}. Finally, Section \ref{sec:geodesics-disc} collects a few technical results about geodesic curves.

We close the introduction with some notation that will be used throughout the paper. We let $\Omega\subset \R^d$ be an open,  bounded set with Lipschitz boundary.
We use the notation  $(x',x_d)$ with $x' \in \R^{d-1}$ and $x_d \in \R$ for elements $x \in \R^d$. Given a function $u\colon \Omega \to \R^d$, we write $\nabla'$ for the derivative with respect to the first $(d-1)$ variables $x'$. The second derivative corresponds to a third-order tensor, and we use the notation $[\nabla^2 u]_{ijk} = \partial_j \partial_k u_i$ to refer to its components.

For a set $\mathcal{O}\subset \Omega$, we denote by $\partial^*\mathcal{O}$   the reduced boundary. We let $\nu_\mathcal{O}$ be the measure theoretic inner normal on $\partial^*\mathcal{O}$. Moreover, $\chi_A$ is the characteristic function of a set $A$, meaning that
$$\chi_A=\begin{cases}1&\text{if }x\in A\\
0&\text{otherwise}.\end{cases}$$
We will adopt classical notation for Sobolev spaces, bounded Radon measures, and functions of bounded variation (see \cite{ambrosio2000fbv}). For $u\in BV(\Omega; \R^m )$, we define one-sided limits on a rectifiable surface $M$ with orientation at a point $x\in M$ specified by the normal $\nu_M(x)$ as $u^+(x)$ and $u^-(x)$; here, $u^+$ is the limit from the direction $\nu_M(x)$ and $u^-$ the limit from the direction $-\nu_M(x).$ These limits are well defined at $\mathcal{H}^{d-1}$-a.e.\ point $x\in M$.  
We  write $\eps\to 0$ in place of a generic sequence $\eps_n\to 0+$. Throughout the work, $C>0$  denotes a generic constant whose value may change from line to line within the same equation. 

For every $x_0\in \R^d$ and $\nu\in \mathbb{S}^{d-1}$, let $Q_\nu(x_0,r)$ be the cube centered at $x_0$ with sides of length $r$, and such that two faces are orthogonal to $\nu$. We also define the half cubes 
$Q^\pm_{\nu(x_0)}(x_0, r ) = Q_{\nu(x_0)}(x_0, r)\cap \{x\colon \pm (x-x_0)\cdot \nu > 0\}$, where $\pm$ stands as placeholder for $+$ and $-$.

\section{Setting and main results}
\label{sec:main}
In this section, we present the model and our main results.
\subsection{Assumptions}
\label{subs:ass}
Let $\Omega\subset \R^d$ be an open, bounded set with Lipschitz boundary. For every $U \subset \Omega$ and every $\eps>0$, we define the energy
\begin{equation}\label{eqn:energy}
E_\eps [u, U]: = \int_{U} \left(\frac{1}{\eps}W(x,\nabla u) + \eps |\nabla^2 u |^2 \right) dx \quad \text{for } u\in H^2(\Omega;\R^d),
\end{equation}
where $W \colon \overline{\Omega} \times \R^{d \times d} \to \R$ is a nonnegative, continuous potential. For brevity, we write $E_\eps [u]$ instead of $E_\eps [u, \Omega]$. We assume that for any $x \in \overline \Omega$ we have
\[
    \big\{ M \in \R^{d \times d} : W(x,M) = 0 \big\} = \{ A(x), B(x) \}
\]
for two functions $A\colon \overline \Omega \to \R^{d \times d}$ and $B \colon \overline \Omega \to \R^{d \times d}$ which are called the \emph{wells} of the potential. 
We require the following assumptions on the potential $W$ and its wells $A, B$: 
\begin{enumerate}[label=(H\arabic*), ref={\rm (H\arabic*)}]
    \item \label{H1}\textit{(Structure of wells)} There exist $\alpha,\beta \in C^2(\overline{\Omega};\R^d)$ such that 
    $$
        A  = \nabla \alpha \qquad \text{ and } \qquad B = \nabla \beta.
    $$
    The wells are rank-one connected, i.e.,  
    \[
        {\rm rank}\, (A(x) - B(x)) = 1 \qquad  \text{for all $x \in \overline{\Omega}$,} 
    \]
    and well-separated in $\overline \Omega$, meaning
    \begin{equation}
        \label{eq:delta-well}
        \min_{x\in \overline{\Omega}}|A(x)- B(x)| > 0. 
    \end{equation}

    \item \label{H2} \textit{(Quadratic growth)} We assume that there exists $C_* > 0$ such that
    \begin{equation*}
    %\label{eqn:all W conditions}
        \frac{1}{C_*}\min\big\{|M-A(x)|^2,|M-B(x)|^2\big\}\leq W(x,M) \leq C_*\min\big\{|M-A(x)|^2,|M-B(x)|^2\big\}
    \end{equation*}
    for all $x \in \overline \Omega$ and $M\in \R^{d\times d}$. In particular,  for every $x\in \overline{\Omega}$ it holds that $W(x,M) = 0$ if and only if $M = A(x)$ or $M = B(x)$. 
    \item \label{H3} \textit{(Regularity of $\sqrt W$)} For every $R>0$ there exists $C_{\rm Lip}(R)>0$ such that 
    $$|\sqrt{W(x,M)}-\sqrt{W(y,M)}|\leq C_{\rm Lip}(R)|x-y|$$ for every $M\in B_R(0)$ and every $x,y\in \Omega$.
\end{enumerate}
Condition \ref{H1} is a natural assumption: The wells being gradients allows pure phases to be admissible in the model. The rank-one connectedness of the gradients allows for phase transitions in the sharp-interface limit. The well-separability condition guarantees that the wells do not coincide, and that the gradients of functions in the limit space are of $BV$-regularity (cf.\ \cite{CristoferiGravina2021} for an analogous condition). 
Since $A,B \in L^\infty(\Omega; \R^{d \times d})$, we infer that condition \ref{H2} is equivalent to $W$ being comparable to $\min\{|M-A(x)|^2,|M-B(x)|^2\}$ in a small neighborhood of the wells and having growth comparable to $|M|^2$ as $|M|\to \infty.$
Assumption \ref{H3} is satisfied by the prototypical double-well function $\min\{|M-A(x)|^2,|M-B(x)|^2\}$, as well as by maps fulfilling the differential constraint
\begin{equation}\label{eqn:dampened osc}
|\nabla_x W(x,M)|\leq C\sqrt{W(x,M)} \quad \text{for some }C>0,\,\text{for every }x\in\Omega\text{ and }M\in \R^{d\times d}, 
\end{equation}
as can be directly checked via the chain rule. Similar but stronger differential assumptions have been used to study parabolic-phase field models with moving wells \cite{GanediMarveggioStinson}.

While assumptions in the spirit of \ref{H1}--\ref{H3}   are standard in the theory of phase transitions and are sufficient for compactness and the lower bound of the $\Gamma$-limit, we impose a stronger condition on $W$ for the construction of  recovery sequences. To motivate this additional hypothesis, we show in Lemma~\ref{lem:rep}  below that the assumption on rank-one connectedness, along with the fact that the wells $A$ and $B$ are gradient fields, i.e., \ref{H1}, yields that $A-B$ can be locally represented by a   outer product of vectors involving a gradient field. For the construction of recovery sequences, we will strengthen this to a \emph{global} assumption. Moreover, we also require a by now standard condition on $W$ (cf.\ \cite{contifonsecaleoni2002}) ensuring that the optimal profile problem reduces to a one-dimensional geodesic problem. Further technical geometric assumptions are made to simplify the construction of   recovery sequences. Explicitly, we require: 
\begin{enumerate}[resume, label=(H\arabic*), ref={\rm (H\arabic*)}]
    \item \label{H4} There exist $\kappa \in C^1(\R; \R^d \setminus \lbrace 0 \rbrace )$ and  $\gamma \in C^3(\R^d)$ with $\nabla \gamma \neq 0$ satisfying the following properties: \\[-0.5em]
    \begin{enumerate}
        \item[(i)] We have
            \begin{align}\label{eqn:global_well_structure}
                A = -B = (\kappa \circ \gamma) \otimes \nabla \gamma.
            \end{align}
        \item[(ii)] For $\nu := \nabla \gamma/|\nabla \gamma|$ we have
            \begin{equation}\label{eqn:W1Dstructure}
                W(x,M)\geq W\big(x,M\nu(x) \otimes \nu(x)\big) \qquad \text{for all $x\in \Omega$}.
            \end{equation}
        \item[(iii)]
            For any $t \in \gamma(\R^d)$ there exists a $C^2$-diffeomorphism $G\colon \{\gamma = t\} \to \R^{d-1}$ and the relative boundary of $\Omega \cap \{\gamma = t\}$ in $\{\gamma = t\}$ has zero $\H^{d-1}$-measure.
    \end{enumerate}
\end{enumerate}

We proceed with thorough comments on the specific properties in  Remarks  \ref{remark:reduction_of_wells}--\ref{remark:diffeomorphism_construction}.  The reader is invited to skip   these remarks on first reading as they become only relevant in   Section \ref{sec:limsup} on the construction of recovery sequences. 
 
\begin{remark}[Comments on \ref{H4}{\rm (i)}]\label{remark:reduction_of_wells}
  (a)  Without restriction, we can always reduce to the case of $A = -B$. Indeed, suppose that a potential $\Wtrunc$ with wells $\tilde A,\tilde B$  and corresponding functions  $\tilde{\alpha}$, $\tilde{\beta}$ are given as in {\rm \ref{H1}}. Then, by defining 
    \[
        W(x, M) := \Wtrunc\left(x, M + \frac{1}{2}(\tilde A(x) + \tilde B(x)) \right),
    \]
    we obtain a potential with wells given by $A = - B  :=   \frac{1}{2} (\tilde A - \tilde B)$. More precisely, considering the energy 
    \[
        \tilde E_\eps(u) = \int_{\Omega} \left(\frac{1}{\eps} \Wtrunc(x,\nabla u) + \eps |\nabla^2 u |^2 \right) dx
    \]
    for $u \in H^2(\Omega; \R^d)$, we can reformulate the energy in terms of $W$ by setting     \begin{align}\label{eqn:definition_shift}
        v = u + \frac{1}{2} (\tilde{\alpha} + \tilde{\beta}), \qquad   
        E_\eps(v) := \int_{\Omega} \left(\frac{1}{\eps}  W(x,\nabla v) + \eps |\nabla^2 v |^2 \right) dx
    \end{align} 
    to derive
    \[
        \tilde E_\eps(u) = E_\eps(v) + F_\eps(v)
    \]
    with 
    \[
        F_\eps(v) =  - \eps \int_\Omega  \frac{1}{4}|\nabla  \tilde{A} + \nabla \tilde{B} |^2  +(\nabla^2 v) :  (\nabla \tilde{A} + \nabla \tilde{B})  \, dx. 
    \]
    By an application of Young's inequality, we have that, for any sequence $\lbrace u_\eps\rbrace_\eps \subset  H^2(\Omega; \R^d)$, $\tilde E_\eps(u_\eps) $ is uniformly bounded in $\eps$ if and only if $E_\eps(v_\eps) $ is uniformly bounded in $\eps$, where $v_\eps$ is defined as in \eqref{eqn:definition_shift}. In particular, we have 
    \[
        \sup_{\eps > 0} E_\eps(v_\eps) < \infty \qquad \implies \qquad F_\eps(v_\eps) \xrightarrow{\eps \to 0} 0.
    \]
    As a consequence, $E_\eps$ and $\tilde E_\eps$ induce the same $\Gamma$-limit, up to shifting the functions in the sense of \eqref{eqn:definition_shift}.

    (b) For a local version of \eqref{eqn:global_well_structure} we refer to Lemma \ref{lem:rep} below. The global version used here, paired with the additional regularity on $\gamma$  ($C^3$ instead of $C^2$) has a purely constructive purpose allowing us to foliate the ambient Euclidean space by level sets with normals compatible with the rank-one connection. More precisely, let $f_x$ be the solution of the nonlinear ODE system 
     \[
        \begin{cases}
            f_x' = \frac{\nabla \gamma(f_x)}{|\nabla \gamma(f_x)|^2}  \\ 
            f_x(0) = x \in \{ \gamma = t \}
        \end{cases}
     \]
     for any (fixed) $t \in \gamma(\R^d)$. Since $\nabla \gamma \neq 0$, the ODE has a global solution. Furthermore, the induced flow 
   \begin{align}\label{barF}
        \begin{cases}
            \bar F\colon \{ \gamma = t \} \times \R \to \R^d \\
            \bar F(x, s ) := f_x(s)
        \end{cases}
    \end{align}
    has $C^{2}$-regularity since $\gamma \in C^{3}(\R^d)$.  Since 
    \[
        (\gamma(f_x))' = 1,
    \]
    $\bar F$ maps $\{ \gamma = t \} \times \{s\}$ diffeomorphically onto $\{ \gamma = t+s \}$.
    In particular, the level sets foliate the ambient Euclidean space.  
\end{remark}

\begin{remark}[Comments on \ref{H4}{\rm (ii)}]\label{remark:diffeomorphism_construction_2} 
The assumption \eqref{eqn:W1Dstructure}  implies the reduction of the local cell formulas to a one-dimensional geodesic problem. Similar assumptions have been used in the spatially homogeneous case, see e.g.\ \cite{contifonsecaleoni2002, davolifriedrich25, GalvaosousaMillot}. Without such an assumption,  the cell-energy density that appears in the $\Gamma$-limit  is expected to be nonlocal (cf.\ also \eqref{eqn:cellEnergyNonlocal}) and  `multidimensional'.  Indeed, already in the homogeneous case it is not known if one can reduce to a one-dimensional cell formula   (cf.\ \cite[Section 8]{contifonsecaleoni2002}).
\end{remark}

\begin{remark}[Comments on \ref{H4}{\rm (iii)}]\label{remark:diffeomorphism_construction}  
    (a)  The diffeomorphism $G$ along with \eqref{eqn:global_well_structure}  implies the existence of a $C^2$-diffeomorphism $F \colon \R^d \to \R^d$ which straightens the level sets of $\gamma$ globally. In fact, defining  $F \colon \R^d \to \R^d$ as the inverse of
    \[
        x \mapsto \bar F\big(G^{-1}(x'), x_d - t \big)
    \]
    with $\bar{F}$ given in \eqref{barF} (for a fixed $t \in \R^d$) yields a $C^2$-diffeomorphism such that $F(\R^{d-1} \times \{s\}) = \{\gamma = s\}$ for every $s \in \R$. As a consequence, the pushed-forward  rank-one connection
    \[
        ((\kappa \circ \gamma \circ F^{-1}) \otimes (\nabla \gamma \circ F^{-1})) \nabla F^{-1} 
    \]
    is of the form 
    \[
         \tilde \kappa(x_d) \otimes e_d
    \]
    for a suitable function $\tilde\kappa \in C^1(\R; \R^d \setminus \lbrace 0 \rbrace)$. This will simplify our construction of the recovery sequence for the $\Gamma$-limit later on in Section \ref{sec:limsup}. We also refer to Section \ref{sec:limit-laminates} for more details on the structure of the rank-one connection.  

    (b) The existence of $G$  is guaranteed for $d = 2,3$ if \ref{H4}{\rm (i)} holds since the line and the plane are the only non-compact, contractible manifolds modulo diffeomorphism in one and two dimensions, respectively. Indeed, by Remark \ref{remark:reduction_of_wells}{(ii)}, we have that $\{ \gamma = t \} \times \R$ is diffeomorphic to $\R^d$ for any $t \in \gamma(\R^d)$ and hence contractible. Since $\R$ is contractible, we immediately derive that any level set $\{\gamma = t\}$ must be contractible as well. Clearly, any level set of $\gamma$ is also non-compact since $\gamma$ is regular, which implies that any level set is diffeomorphic to the line (for $d = 2$), or the plane (for $d = 3$). Indeed, for $ d=2$ this follows from the fact that any connected 1-manifold without boundary is diffeomorphic to an open interval, while for $d=3$ this observation follows from the classification theorems for surfaces (cf.\ \cite[Theorem 3]{Richards1963}). \\[0.5em] 
    In higher dimensions, for instance, a condition of the type $\partial_d \gamma \neq 0$  would ensure that the level sets can be globally written as a graph. Lastly, we remark that a spatially independent version of this assumption appears in \cite{contifonsecaleoni2002}: in this `simplest' case, one has a foliation by hyperplanes. 

    (c) Requiring the relative boundary of $\{\gamma = t\} \cap \Omega$ in $\{\gamma = t\}$ to have zero $(d-1)$-Hausdorff measure prevents accumulation of fat Cantor-type sets in the boundary of $\Omega$ along the level sets. For instance, this condition can also be enforced by requiring the level sets of $\gamma$ to be transversal to $\partial \Omega$. Such an assumption has been present implicitly already in \cite{contifonsecaleoni2002}.  As an alternative, in the literature  of homogeneous solid-solid phase transitions, i.e., the case that $\gamma$ is affine, the domain has been required to be convex or strictly star-shaped for the construction of the recovery sequence, or that 
    \[
        t \mapsto \H^{d-1}(\Omega \cap \{\gamma = t\})
    \]
    is continuous, and the level sets $\Omega \cap \{\gamma = t\}$ are connected, see e.g.\ \cite{contifonsecaleoni2002, contischweizer06, GalvaosousaMillot}.
 
\end{remark}

\subsection{Main results} 

We now state our main results on $\Gamma$-convergence of the energies $E_\eps$ given in \eqref{eqn:energy} to a limit energy $E_0$. The limit energy $E_0$ is only  finite on a subset of  $H^2(\Omega; \R^d)$ consisting of suitably rigid functions.  We denote this collection of functions by 
\begin{equation}\label{eq:X}
    X(\Omega) : = \big\{u\in H^1(\Omega;\R^d) : \nabla u \in BV(\Omega;\{A(\cdot),B(\cdot)\})\big\},
\end{equation}
where we use the shorthand notation 
\begin{equation*}
%\label{eq:BV-AB}
    BV(\Omega;\{A(\cdot),B(\cdot)\}) : = \big\{M \in BV(\Omega;\R^{d\times d})\colon \,  M(x) \in \{ A(x),B(x)\} \text{ for a.e.\ }x\in \Omega\big\}.
\end{equation*}
  The functions in $X(\Omega)$ are   \emph{rigid} in the sense that they show laminate-type behavior with discontinuities in the gradient occurring along regular surfaces. This is in the spirit of the seminal work of Ball and James \cite{ball-james1987}, where discontinuities of the gradient occur along hyperplanes. In particular, we show that, under a natural requirement on the rank-one connection between $A$ and $B$, the set $J_{\nabla u}$ is the union of level sets of a suitable $C^2$-function related to the two wells. Our first result reads as follows.

\begin{theorem}[Local structure of jump set]\label{thm:characterisation-of-piecewise-BV-space}
    Let $A$ and $B$ be as in \ref{H1}. Assume that for an open set $U \subset \Omega$ we have
    \begin{equation}\label{eqn:rank1Structure}
    A(x) - B(x) = \kappa(\gamma(x)) \otimes \nabla \gamma(x)
    \end{equation} 
    for every $x\in U$, where $\kappa \in C^1(\R;\R^d\setminus\{0\})$ and $\gamma \in C^2(U)$ with $\nabla \gamma \neq 0$ in $U$. Let $u \in X(\Omega)$ and, for every $z \in J_{\nabla u}$, denote by $K_z$ the connected component of $\{x\in U \colon \,  \gamma(x) = \gamma(z) \}$ containing $z$. Then, it holds that
    \[
       \bigcup_{z\in J_{\nabla u} \cap U} K_z = J_{\nabla u}. 
    \]
\end{theorem}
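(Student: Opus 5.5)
The plan is to turn the curl-free constraint on $\nabla u$ into a first-order PDE for the characteristic function of the $A$-phase, straighten the level sets of $\gamma$ locally, and then propagate the local picture along connected components of level sets. Throughout, $J_{\nabla u}$ is understood inside $U$.

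\emph{Step 1: the phase indicator is $BV$ and satisfies a constraint.} Let $E:=\{x\in\Omega\colon \nabla u(x)=A(x)\}$, so that $\nabla u = B+\chi_E(A-B)$ a.e. By \eqref{eq:delta-well},
\[
\chi_E=\frac{(\nabla u-B):(A-B)}{|A-B|^2}.
\]
This is a product of a $BV$ function with a $C^1$ function, because $A,B\in C^1$ by \ref{H1} and $|A-B|$ is bounded away from zero. Hence $E$ has finite perimeter, and $J_{\nabla u}\cap U=\partial^*E\cap U$ up to the usual identifications. Next choose $\mathcal K\in C^2(\R;\R^d)$ with $\mathcal K'=\kappa$. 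Then in $U$ we have $(\kappa\circ\gamma)\otimes\nabla\gamma=\nabla(\mathcal K\circ\gamma)$, which is curl-free. The field $\nabla u-\nabla\beta=\chi_E\,\nabla(\mathcal K\circ\gamma)$ is also curl-free. Computing the distributional curl row by row, the smooth part cancels and we obtain, as measures in $U$,
\[
\kappa_i(\gamma)\big(\partial_k\gamma\, D_j\chi_E-\partial_j\gamma\, D_k\chi_E\big)=0\quad\text{for all } i,j,k.
\]
Since $\kappa\neq0$, this gives $D\chi_E\wedge\nabla\gamma=0$. In other words $D\chi_E=\nu_E|D\chi_E|$ with $\nu_E=\pm\nabla\gamma/|\nabla\gamma|$ at $\mathcal H^{d-1}$-a.e.\ point of $\partial^*E\cap U$.

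\emph{Step 2: local straightening.} Fix $x_0\in U$. Since $\nabla\gamma(x_0)\neq0$, the Constant Rank (or Implicit Function) Theorem gives a $C^2$-diffeomorphism $\Phi$ from a neighbourhood $V$ of $x_0$ onto a box $Q'\times I$ with $\gamma\circ\Phi^{-1}(y)=y_d$. Set $w:=\chi_E\circ\Phi^{-1}$. The chain rule for $BV$ under diffeomorphisms shows that $Dw$ is parallel to $e_d$, so $D_{y'}w=0$ on the box. Because $Q'$ is connected, $w(y)=g(y_d)$ for some $g\in BV(I;\{0,1\})$. Such a $g$ has finitely many jump points $t_1,\dots,t_m$ in $I$, and $w$ is locally constant away from them. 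Consequently
\[
J_{\nabla u}\cap V=\bigcup_{l=1}^m\{\gamma=t_l\}\cap V,
\]
and every point of a slice $\{\gamma=t_l\}\cap V$ is a genuine jump point. At such a point the traces $A$ and $B$ are distinct, and the blow-up is a half-space limit in the $y_d$-direction pulled back by $\Phi$.

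\emph{Step 3: globalization along components.} The inclusion $J_{\nabla u}\cap U\subset\bigcup_z K_z$ is trivial, since $z\in K_z$. For the reverse inclusion, fix $z\in J_{\nabla u}\cap U$ and let $S:=K_z\cap J_{\nabla u}$. By Step 2 around any point $x_0\in K_z$, the set $K_z\cap V=\{\gamma=\gamma(z)\}\cap V$ is connected, being a graph slice in the box. This slice is either entirely contained in $J_{\nabla u}$ (if $\gamma(z)$ is a jump of $g$) or entirely disjoint from it. Hence $S$ is both relatively open and relatively closed in $K_z$. It is nonempty, so by connectedness $S=K_z$.

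I expect the main obstacle to be making the pointwise notion of $J_{\nabla u}$ compatible with the dichotomy used in Step 3. One must check that a jump point of the one-dimensional profile $g$ yields an approximate jump of $\nabla u$ at \emph{every} point of the slice, not just at $\mathcal H^{d-1}$-a.e.\ point. Conversely, a non-jump value of $g$ must exclude all points of the slice. I would handle both directions by working with the precise representative of $g$ and transporting approximate limits through the $C^2$ map $\Phi$, using that $A$ and $B$ are continuous.
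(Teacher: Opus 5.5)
Your proposal is correct and follows essentially the paper's route: a curl/rank-one constraint forces the normal of the $A$-phase to be $\pm\nabla\gamma/|\nabla\gamma|$; Constant Rank straightening then makes the pulled-back phase indicator depend on one variable with finitely many jumps (as in Proposition \ref{prop:local-characterisation}); and a clopen argument on $K_z$ concludes. Your deviations are valid and slightly cleaner: you read off $D\chi_E\wedge\nabla\gamma=0$ directly from the curl of $\chi_E\nabla(\mathcal K\circ\gamma)$ instead of via Lemma \ref{lemma-rank-1-necessity}, you get closedness by applying the local dichotomy at the limit point itself instead of through a uniform radius $\eta(K)$, and you address the pointwise (not merely $\mathcal H^{d-1}$-a.e.) identification of $J_{\nabla u}$, which the paper passes over.
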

\begin{figure}
    \centering
    \includesvg[width=0.5\linewidth]{./images/local_interface_structure}
    \caption{The prescribed rank-one connection in Theorem \ref{thm:characterisation-of-piecewise-BV-space} gives a local structure of the jump set.} \label{figure1} 
\end{figure}
We remark  that \eqref{eqn:rank1Structure} is always fulfilled locally if \ref{H1} holds, see Lemma \ref{lem:rep} below. When the stronger global assumption \ref{H4}(i) holds, Theorem \ref{thm:characterisation-of-piecewise-BV-space} gives a complete characterization of the interfaces in terms of the connected components of the level sets of $\gamma$, see also Figure~\ref{figure1}. 
The proof of Theorem \ref{thm:characterisation-of-piecewise-BV-space} will be given in Section \ref{sec:limit-laminates}. To write the limit energy functional, we need to introduce the notion of \emph{geodesic cell-energy density}.

\begin{definition}\label{def:geo-cell-en}
    Let $A$ and $B$ be as in \ref{H1}. For every $x_0\in \Omega$, we define
    \begin{equation}\label{eqn:geoCellEnergy}
        \begin{aligned}
            \sigma_{\rm geo}(x_0) : = \inf \left\{ \int_{0}^1 2\sqrt{W(x_0,\Phi)}| \Phi'| \, dt \colon \, \Phi\in W^{1,1}((-1,1);\R^{d\times d})\text{ with }\Phi(-1) = B(x_0),\, \Phi(1) = A(x_0)  \right\}.
        \end{aligned}
    \end{equation}
\end{definition}
\noindent With this, we introduce the limit energy functional as
\begin{equation}\label{eqn:limitEnergy}
    E_0(u) : = \int_{J_{\nabla u}} \sigma_{\rm geo}  \, d\mathcal{H}^{d-1} \quad \quad \text{ for }u\in X(\Omega),
\end{equation}
where $X(\Omega)$ is defined in \eqref{eq:X}. Our main result is the following.
\begin{theorem}[Compactness and $\Gamma$-convergence]
\label{thm:main}
 Let $\Omega$ be an open, bounded Lipschitz set, and let  $W$ satisfy assumptions {\rm \ref{H1}}--{\rm \ref{H3}}.  Let $E_\eps$ and $E_0$ be as in \eqref{eqn:energy} and \eqref{eqn:limitEnergy}, respectively. Then, the following holds.
\begin{itemize}
\item {(Compactness and $\liminf$-inequality).} 
    Let  $\{u_{\ep}\}_\ep \subset H^2(\Omega, \R^d)$ be such that 
    \begin{align}\nonumber %\label{bounded-energy}
        \sup_{\ep>0} E_{\ep}  (u_\ep)   <+\infty. 
    \end{align}
    Then, there exists $u \in X(\Omega)$ such that, up to a (not relabeled) subsequence, it holds that
    \[
        u_{\ep} - \frac{1}{|\Omega|}\int_\Omega u_{\ep}  \, dx  \to u \quad \text{ strongly in } H^1(\Omega;\R^d).
    \]
    Furthermore,
    \begin{equation*}
      E_{0}(u) \leq   \liminf_{\eps \to 0} E_{\eps}(u_\eps).
    \end{equation*}
    \item {($\limsup$-inequality).} 
    Assume further that \ref{H4} holds, and that $\Omega$ is simply connected.   Then, for every $\bar{u}\in X(\Omega)$ there exists a sequence $\{\bar{u}_{\ep}\}_\ep \subset H^2(\Omega, \R^d)$ such that  \[
        \bar{u}_{\ep} \to \bar{u} \text{ strongly in } H^1(\Omega;\R^d),
    \]
    and
    \begin{equation*}
        \limsup_{\eps \to 0} E_{\eps}(\bar{u}_\eps)\leq E(\bar u) .
    \end{equation*}
\end{itemize}
\end{theorem}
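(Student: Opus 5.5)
The statement to prove is Theorem~\ref{thm:main}. I would split it into three parts: compactness, the lower bound, and the upper bound.

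\textbf{Compactness.} Fix $\{u_\eps\}_\eps$ with $\sup_\eps E_\eps(u_\eps)<\infty$.
By \ref{H2}, $\int_\Omega \min\{|\nabla u_\eps - A|^2,|\nabla u_\eps-B|^2\}\,dx \le C\eps$. Hence $\nabla u_\eps$ is bounded in $L^2$ and is $2$-equiintegrable, since $A,B$ are bounded.
Next I introduce a truncated space-dependent geodesic distance $\phi(x,M) := \min\{d_x(M,B(x)),\,\sigma_0\}$, where $d_x$ is the degenerate metric with weight $2\sqrt{W(x,\cdot)}$ and $\sigma_0$ is smaller than half the distance between the wells. This is the Modica--Mortola/Fonseca--Tartar trick, as in \cite{CristoferiGravina2021}. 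Using \ref{H3}, the map $x\mapsto d_x(M,N)$ is Lipschitz for $M,N$ in a ball, so
\[
|\nabla[\phi(x,\nabla u_\eps)]| \le 2\sqrt{W(x,\nabla u_\eps)}\,|\nabla^2 u_\eps| + C.
\]
Young's inequality then bounds $\phi(\cdot,\nabla u_\eps)$ in $BV$.
The Young measure generated by $\nabla u_\eps$ is supported on $\{A(x),B(x)\}$, because $\int W(x,\nabla u_\eps)\to0$. Strong $L^1$ convergence of $\phi(\cdot,\nabla u_\eps)$, together with well-separation \eqref{eq:delta-well}, forces the Young measure to be a Dirac mass. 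Therefore $\nabla u_\eps\to\nabla u$ in measure, and by equiintegrability strongly in $L^2$.
Poincaré's inequality gives the $H^1$ convergence of $u_\eps-\fint_\Omega u_\eps\,dx$. The limit satisfies $\nabla u=\chi_E A+(1-\chi_E)B$, where $\chi_E$ is an $L^1$-limit of functions bounded in $BV$. Since $A,B\in C^1$, we get $\nabla u\in BV$, so $u\in X(\Omega)$.

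\textbf{Liminf.} I would use the blow-up method of \cite{FonsecaMueller1992}. Set $\mu_\eps := (\frac1\eps W(x,\nabla u_\eps)+\eps|\nabla^2u_\eps|^2)\mathcal L^d$ with weak-$*$ limit $\mu$. It suffices to show that
\[
\frac{d\mu}{d\mathcal H^{d-1}\mres J_{\nabla u}}(x_0)\ge\sigma_{\rm geo}(x_0)
\]
for $\mathcal H^{d-1}$-a.e.\ $x_0\in J_{\nabla u}$.
I rescale on cubes $Q_{\nu}(x_0,r)$, with $\nu$ the jump normal. The continuity of $W$ and \ref{H3} allow freezing $x=x_0$ up to an error $Cr$. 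This yields a bound below by a homogeneous nonlocal cell energy, in the sense of Definition~\ref{def:nonlocal-cell}.
To compare this with $\sigma_{\rm geo}(x_0)$, I slice in direction $\nu$. On a.e.\ line, $ab\le \frac12(a^2+b^2)$ gives
\[
\frac1\eps W+\eps|\partial_\nu\nabla u|^2\ge 2\sqrt W|\partial_\nu \nabla u|.
\]
Each line connects (approximately) $B(x_0)$ to $A(x_0)$, so the slice integral is at least $\sigma_{\rm geo}(x_0)$ minus a small error.
The delicate point is that boundary values are only approximate. I would handle this with the Lipschitz estimates of Section~\ref{sec:geodesics-disc}: the geodesic distance is continuous near the wells by \ref{H2}. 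I also need geodesics of bounded length, so that the cost of small endpoint perturbations and of the frozen coefficient is controlled.

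\textbf{Limsup.} Assume \ref{H4}, reduce to $A=-B$ as in Remark~\ref{remark:reduction_of_wells}, and use Theorem~\ref{thm:characterisation-of-piecewise-BV-space}. Then $J_{\nabla\bar u}$ is a union of level sets $\{\gamma=t_i\}\cap\Omega$. Finite energy gives finitely many such sets, since the $\mathcal H^{d-1}$-measures are bounded below thanks to \ref{H4}(iii).
Next I straighten with the diffeomorphism $F$ of Remark~\ref{remark:diffeomorphism_construction}. In straightened coordinates, interfaces become hyperplanes $\{x_d=t_i\}$ and the rank-one connection is $\tilde\kappa(x_d)\otimes e_d$.
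By \eqref{eqn:W1Dstructure}, the optimal profile is one-dimensional: $\Phi(s)=\varphi(s)\otimes\nu$, reached by $u_\eps$ depending on $\gamma$ only near the interface. Concretely, near $\{\gamma=t_i\}$ I set
\[
u_\eps=\alpha+\text{(1D correction)}\Big(\frac{\gamma-t_i}{\eps}\Big),
\]
built from a near-optimal geodesic for $\sigma_{\rm geo}$ at nearby points, localized on small cylinders and glued with cutoffs.

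\textbf{Main obstacle.} The hardest step is this recovery construction. The geodesic for $\sigma_{\rm geo}(x)$ varies with $x$ along the interface, and the curl-free constraint must be preserved while gluing. The space-dependence of the wells also produces error terms from $\nabla^2\gamma$ and $\nabla\tilde\kappa$, of order $\eps\cdot(1/\eps)\cdot\eps$. I would first attempt a piecewise-constant-in-$x'$ choice of profiles on cylinders of width $\delta$, glued with $O(\eps)$ transition regions, and then let $\eps\to0$ followed by $\delta\to0$, using the continuity of $\sigma_{\rm geo}$ provided by \ref{H3}.
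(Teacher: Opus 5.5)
Your overall architecture is the paper's: Young measures plus a truncated geodesic distance for compactness, blow-up plus slicing for the lower bound, and flattening by $F$ with glued one-dimensional profiles for the upper bound. However, one claim in the upper bound is false and one step in the lower bound is done in the wrong order.

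\textbf{The limsup: infinitely many interfaces.} You assert that $J_{\nabla\bar u}$ lies on finitely many level sets because their $\mathcal{H}^{d-1}$-measures are bounded below ``thanks to \ref{H4}(iii)''. That hypothesis only says that the relative boundary of $\Omega\cap\{\gamma=t\}$ is $\mathcal{H}^{d-1}$-null. It gives no lower bound, and components of $\{\gamma=t\}\cap\Omega$ can be arbitrarily small near $\partial\Omega$. A counterexample:
\begin{itemize}
\item take $d=2$, $\Omega$ a triangle whose top vertex is at height $t^*$, $\gamma(x)=x_2$ and $\kappa$ constant;
\item let $\nabla\bar u=\pm\kappa\otimes e_2$ switch sign across the lines $x_2=t^*-2^{-k}$, $k\in\N$;
\item the interface lengths are of order $2^{-k}$, hence summable, so $\bar u\in X(\Omega)$;
\item yet $J_{\nabla\bar u}$ has infinitely many components.
\end{itemize}
A construction on finitely many layers therefore does not reach all of $X(\Omega)$. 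You need the density step in Step~1 of Theorem~\ref{thm:general-limsup}. There the target is first approximated by functions whose jump set lies on finitely many level sets, with $E_0$ converging (following \cite[Theorem 5.10, Step 4]{contifonsecaleoni2002}), and one then diagonalizes. In the paper, \ref{H4}(iii) plays a different role: it ensures that the total area of the cubes meeting the relative boundary of the interfaces vanishes as the mesh is refined.

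\textbf{The liminf: order of freezing.} You freeze $x=x_0$ in the cell energy ``up to an error $Cr$'' and only then slice. As stated, this is not valid.
\begin{itemize}
\item In the pure-phase halves of the cube, a recovery-type sequence has $\nabla u_\eps\approx A(x)$ or $B(x)$, while $W(x_0,A(x))\sim|x-x_0|^2$.
\item After dividing by $r^{d-1}$, the frozen energy therefore exceeds the true one by about $r^3/\eps$.
\item The blow-up only guarantees $\eps/\rho_\eps\to0$, not $\rho_\eps^3/\eps\to0$.
\end{itemize}
Freezing after Young's inequality, in $2\sqrt W|\nabla^2u_\eps|$, also fails. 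The error is then $r\int_{Q_r}|\nabla^2u_\eps|$, which the energy only bounds by $r^{d-1}(r^3/\eps)^{1/2}$, because the actual slices $s\mapsto\nabla u_\eps(\rho_\eps y',\rho_\eps s)$ have no length control.

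The paper reverses the order in Proposition~\ref{thm:cellEnergyGeoBound}:
\begin{itemize}
\item it keeps $W(x,\cdot)$ and slices first;
\item it bounds each slice from below by the path-dependent distance $d_{V_0}(\psi^{y',\rho_\eps},\cdot,\cdot)$, with the truncated potential $V_0=\min\{W,C_0\}$;
\item only then does it freeze, through Proposition~\ref{comparision-geodesic-distances}.
\end{itemize}
That comparison uses near-minimizers of $d_{V_0}(\psi^{y',\rho_\eps},M,N)$ with Euclidean length at most $C(1+|M|+|N|)$, given by Theorem~\ref{existence-of-geodesics-with-bounded-path-length} with $\beta_*=0$. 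The resulting error $C\rho_\eps\big(1+|\Phi_\eps^{y',\rho_\eps}(\tau)|+|\Phi_\eps^{y',\rho_\eps}(-\tau)|\big)$ integrates to $o(1)$. Your final sentence points at this mechanism, but it only works in this order.

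Similarly, in your compactness step the $x$-Lipschitz bound on $d_x(M,B(x))$ does not follow from \ref{H3} alone. It needs the same bounded-length near-geodesics, as in Lemma~\ref{geodesic-distance-gradient-estimates}.
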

The proof will be given in Sections \ref{sec:compactness}--\ref{recoverysequnecesection} and is organized as follows: The compactness of sequences with equibounded energy is carried out in Section~\ref{sec:compactness}. The proof of the $\Gamma$-liminf inequality is tackled in Section~\ref{sec:liminf-interm}: we first provide an intermediate lower bound for the asymptotic behavior of the energy functionals in Proposition \ref{thm:liminf-inter}, by means of a multidimensional cell-energy density, cf.\ Definition \ref{def:nonlocal-cell}.  In Proposition \ref{thm:cellEnergyGeoBound}, we prove that this cell-energy density is bounded from below by the geodesic cell-energy density \eqref{eqn:geoCellEnergy}. We emphasize that assumption \ref{H4} is not used in the proof of the compactness or in the identification of the lower bound.
The construction of  recovery sequences is given in Section \ref{sec:limsup} under assumption \ref{H4}. In particular, we will use the diffeomorphism $F$  provided by Remark \ref{remark:diffeomorphism_construction}  to flatten out the jump set and to first construct recovery sequences in the diffeomorphed domain $F(\Omega)$. This   construction  is first provided for simple geometries of $F(\Omega)$ in Theorem \ref{thm:first-limsup}, and the extension to general geometries is then subject of Theorem \ref{thm:general-limsup}.

\section{Characterization of limit configurations}
\label{sec:limit-laminates}

This section is devoted to the proof of Theorem \ref{thm:characterisation-of-piecewise-BV-space}, which in turn is based on several intermediate results. We begin by showing that, under mild regularity assumptions on the maps $A$ and $B$  (and no gradient structure), the set where a map in $BV(\Omega;\{A(\cdot),B(\cdot)\})$ coincides with one of the two wells has finite perimeter.

\begin{lemma}[Finite perimeter of   phases]\label{lem:Oset}
    Let $A,B\in W^{1,\infty}(\Omega;\R^{d\times d})$ satisfy  $
        \min_{x\in \overline{\Omega}}|A(x)- B(x)| > 0$ 
    and let $M \in BV(\Omega, \{A(\cdot), B(\cdot)\})$. Then the set $\Landau := \{x\in\Omega \colon \, M(x) = A(x) \}$ has finite perimeter with
    \[
        \H^{d-1}\big((\partial^*\Landau \cap \Omega) \triangle J_M\big) = 0.
    \]
\end{lemma}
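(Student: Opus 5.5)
The plan is to express the characteristic function of $\Landau$ as a Lipschitz-type function of $M$ and then invoke the BV chain rule. Since $M(x)\in\{A(x),B(x)\}$ a.e., we have
\[
\chi_\Landau(x) = \frac{(M(x)-B(x)):(A(x)-B(x))}{|A(x)-B(x)|^2} \quad\text{for a.e. } x\in\Omega.
\]
Indeed, the right-hand side equals $1$ when $M=A$ and $0$ when $M=B$.

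Set $D:=A-B$ and $g:=D/|D|^2$. By the well-separation assumption, $|D|\geq \delta>0$ on $\overline\Omega$, and $D\in W^{1,\infty}$, so $g\in W^{1,\infty}(\Omega;\R^{d\times d})$. Also $M-B\in BV\cap L^\infty$, because $A,B$ are bounded and $M$ takes only the values $A$ and $B$. The product of a bounded BV function with a $W^{1,\infty}$ function is BV, with
\[
|D(fg)|\le \|g\|_\infty|Df| + \|f\|_\infty|\nabla g|\,\mathcal L^d .
\]
This follows by mollifying $g$, or from the Leibniz rule in \cite{ambrosio2000fbv}. Hence $\chi_\Landau\in BV(\Omega)$, so $\Landau$ has finite perimeter in $\Omega$.

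For the jump sets, work at $\H^{d-1}$-a.e.\ point. Since $g$ and $B$ are continuous (Lipschitz), their approximate limits coincide everywhere with the pointwise values. Hence $x$ is an approximate jump point of $M$ if and only if it is one of $M-B$. The same holds for $\chi_\Landau=(M-B):g$, with one direction requiring care.

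First, let $x\in J_M$ with one-sided traces $M^\pm(x)$. We need the traces to lie in $\{A(x),B(x)\}$. This holds because $M^\pm$ are $L^1$-limits on half balls of functions with values in $\{A(y),B(y)\}$, and $A,B$ are continuous at $x$. Since $M^+\neq M^-$, one trace is $A(x)$ and the other is $B(x)$. Then the traces of $\chi_\Landau$ are $1$ and $0$, so $x\in J_{\chi_\Landau}$.

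Conversely, if $x\in J_{\chi_\Landau}$, then $\chi_\Landau$ has one-sided traces $1$ and $0$ along some normal. Writing $M=B+\chi_\Landau D$ and using continuity of $B$ and $D$, the function $M$ has one-sided traces $A(x)$ and $B(x)$ at $x$. These differ, so $x\in J_M$.

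Points where approximate limits fail to exist form the set $S\setminus J$, which is $\H^{d-1}$-null for BV functions (Federer–Vol'pert). Finally, by De Giorgi's structure theorem, $J_{\chi_\Landau}$ coincides with $\partial^*\Landau\cap\Omega$ up to an $\H^{d-1}$-null set. Together these give $\H^{d-1}((\partial^*\Landau\cap\Omega)\triangle J_M)=0$.

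The main obstacle is the trace step: showing that the one-sided traces of $M$ lie in $\{A(x),B(x)\}$. I plan to handle it by observing that on a half ball $B_r^\pm(x)$,
\[
\min\{|M^\pm(x)-A(x)|,\,|M^\pm(x)-B(x)|\}\le |M(y)-M^\pm(x)| + \operatorname{Lip}(A,B)\,r .
\]
Averaging this inequality over the half ball and letting $r\to0$ gives the claim.
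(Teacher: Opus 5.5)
Your argument is correct, but it follows a different route from the paper.

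The paper's proof goes as follows. It sets $V:=M-A$ and notes that $|V|$ only takes the values $0$ and $|B-A|\ge\delta:=\min_{\overline\Omega}|A-B|$. Hence $\mathcal{O}=\{|V|<\eta\}$ for every $\eta\in(0,\delta)$. The coarea formula applied to $|V|\in BV(\Omega)$ gives finite perimeter for a.e.\ such $\eta$, and therefore for all of them, since the set does not depend on $\eta$. The $\mathcal{H}^{d-1}$-equivalence of $\partial^*\mathcal{O}\cap\Omega$ and $J_M$ is then simply attributed to blow-up properties of jump points.

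You instead write $\chi_{\mathcal{O}}=(M-B):g$ with the Lipschitz coefficient $g=(A-B)/|A-B|^2$ and invoke the Leibniz rule. This also gives the explicit bound $|D\chi_{\mathcal{O}}|(\Omega)\le\|g\|_\infty|D(M-B)|(\Omega)+\|M-B\|_\infty\|\nabla g\|_{L^1}$. You then actually carry out the jump-set comparison and obtain $J_M=J_{\chi_{\mathcal{O}}}$ as sets. Your trace estimate showing $M^\pm(x)\in\{A(x),B(x)\}$ is exactly what makes the delicate direction work: it forces $M^+-M^-=\pm(A-B)(x)$, and $(A-B)(x):g(x)=1\neq 0$.

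As a result, the only measure-theoretic input you still need is the De Giorgi--Federer identification of $J_{\chi_{\mathcal{O}}}$ with $\partial^*\mathcal{O}\cap\Omega$ up to $\mathcal{H}^{d-1}$-null sets. Your appeal to Federer--Vol'pert for the non-jump points of the approximate discontinuity set is superfluous, since you proved both inclusions at every point.

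The coarea route is shorter and avoids dividing by $|A-B|^2$. Yours makes rigorous the jump-set step that the paper dispatches in one sentence.
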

\begin{proof} Defining the function $V : = M-A\in BV(\Omega, \{0, B(\cdot)-A(\cdot)\})$, we find that $\mathcal{O} = \{|V|<\eta\}$ for all $\eta\in (0,\delta)$, where $\delta :=\min_{x\in \overline{\Omega}}|A(x)- B(x)|$. Since $|V|\in BV(\Omega)$, by the coarea formula, $\{|V |<\eta\}$ is a set of finite perimeter for almost every (and thereby every) $\eta\in (0,\delta)$. The $\H^{d-1}$-equivalence of $\partial^*\mathcal{O}$ and $J_M$ follows directly from blow-up properties of the jump set, cf.\ \cite[Definition 3.67]{ambrosio2000fbv}.
\end{proof}

We show that, along the jump set $J_{\nabla u}$ of any map $u\in X(\Omega)$ (see \eqref{eq:X}), the wells $A$ and $B$ must have a rank-one connection.  

\begin{lemma}[Necessity of a rank-one connection]\label{lemma-rank-1-necessity}
    Let $A,B\in C^1(\Omega;\R^{d\times d})$ be such that $ A  =\nabla \alpha $ and $B  =\nabla \beta$ for some  $\alpha,\beta \in C^2(\overline{\Omega};\R^d)$ and  
        $\min_{x\in \overline{\Omega}}|A(x)- B(x)| > 0$. Let $u\in X(\Omega)$. Then for $\H^{d-1}$-a.e.\ $x \in J_{\nabla u}$, it holds that
    $$A(x) - B(x)  = \xi (x)\otimes \nu_{\nabla u}(x),$$
    where $\xi(x)\in \R^{d}\setminus\{0\}$ and $\nu_{\nabla u}$ is a normal to $J_{\nabla u}.$ 
%    Furthermore, if
%    \[
%        A(x) - B(x) = k(x) \otimes \nu(x)
%    \]
%    holds for measurable functions $k: \Omega \to \R^n \setminus \{0\}$ and $\nu: \Omega \to \S^{n-1}$ then we have $\nu(x) = \pm \nu_{J_{\nabla u}}(x)$ for $\H^{n-1}$-a.e. $x \in J_{\nabla u}$ where the sign is up to choice of normal along the jump set of $\nabla u$.
\end{lemma}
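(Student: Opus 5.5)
The idea is to use that $\nabla u$ is itself a curl-free $BV$ field. By Lemma~\ref{lem:Oset}, the set $\Landau = \{\nabla u = A\}$ has finite perimeter and $J_{\nabla u}$ coincides with $\partial^*\Landau\cap\Omega$ up to an $\H^{d-1}$-null set. We can therefore write
\[
\nabla u = B + \chi_\Landau (A-B) \qquad \text{a.e. in } \Omega.
\]
Now set $w := u - \beta \in H^1(\Omega;\R^d)$, so that $\nabla w = \chi_\Landau (A-B) = \chi_\Landau \nabla(\alpha-\beta)$. Since $A - B \in C^1$ and $\chi_\Landau \in BV$, the product rule for $BV$ functions multiplied by $C^1$ functions gives $\nabla w \in BV$, with
\[
D(\nabla w) = \chi_\Landau \nabla(A-B)\,\mathcal{L}^d + (A-B)\otimes D\chi_\Landau .
\]
The jump part is $(A-B)\otimes \nu_\Landau\,\H^{d-1}\mres\partial^*\Landau$, where the third tensor index carries the normal.

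The key step is the symmetry of second distributional derivatives. The tensor $D(\nabla w)$, with entries $D_k \partial_j w_i$, is symmetric in $(j,k)$ as a measure. Its absolutely continuous part and its jump part are mutually singular, so the jump part must be symmetric on its own. Restricting to $\partial^*\Landau$, we obtain for $\H^{d-1}$-a.e.\ $x$ the identity
\[
(A-B)_{ij}(x)\,\nu_k(x) = (A-B)_{ik}(x)\,\nu_j(x) \qquad \text{for all } i,j,k.
\]
For fixed $i$, this says the row vector $r_i := ((A-B)_{ij})_j$ satisfies $r_i\otimes\nu = \nu\otimes r_i$. Contracting with $\nu$ gives $r_i = (r_i\cdot\nu)\,\nu$. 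Setting $\xi_i := r_i\cdot\nu$ yields
\[
A - B = \xi\otimes\nu .
\]
Finally, $\xi \neq 0$ because $|A-B| \ge \delta > 0$. Replacing $\nu_\Landau$ by $\nu_{\nabla u}$ only changes the sign of $\xi$, and the $\H^{d-1}$-equivalence $\partial^*\Landau\triangle J_{\nabla u}$ transfers the statement to $\H^{d-1}$-a.e.\ $x\in J_{\nabla u}$.

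The main obstacle is justifying that the symmetry of $D^2 w$ passes to the jump part. I would argue this cleanly via the Radon–Nikodym derivative with respect to $\H^{d-1}\mres\partial^*\Landau$: for any Borel set $E$, the measure $D(\nabla w)\mres E$ inherits the symmetry, and the absolutely continuous part vanishes on $\H^{d-1}$-$\sigma$-finite Lebesgue-null sets. Alternatively, one can blow up at points of $\partial^*\Landau$. The rescaled $w$ converges to a function whose gradient is $\chi_{\{x\cdot\nu>0\}}(A-B)(x)$, i.e., constant matrices on two half-spaces. The classical Ball–James compatibility then forces $A-B$ to be rank one with normal $\nu$, since a curl-free field taking two constant values across a hyperplane can jump only by a rank-one matrix with that hyperplane's normal.
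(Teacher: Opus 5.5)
Your proof is correct and follows essentially the same route as the paper: both rest on the symmetry of the distributional Hessian of $u$, which forces the jump part $(A-B)\otimes\nu\,\H^{d-1}\mres J_{\nabla u}$ to be symmetric in its last two indices, so every row of $A-B$ is parallel to $\nu$. The differences are cosmetic. You identify the jump part explicitly through $\nabla u - B = \chi_\Landau(A-B)$, Lemma~\ref{lem:Oset} and the $BV$ product rule, whereas the paper uses the general structure theorem with traces $(\nabla u)^\pm = A, B$. You also contract with $\nu$ where the paper divides by a nonzero component $\nu_{i_0}$.
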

\begin{proof}
    By the decomposition of distributional derivatives for $BV$-functions, see \cite[Section 3.9]{ambrosio2000fbv}, we have
    \[
        \partial_i \partial_j u_k \, \mres_{J_{\nabla u}} = ((\partial_j u_k)^+ - (\partial_j u_k)^-)(\nu_{J_{\nabla u}})_i \H^{d-1}\mres_{J_{\nabla u}}
    \]
    for every $i,j,k\in \{1,\dots,d\}$.
    In particular, by exchanging partial derivatives in the sense of distributions we find
    \[
        ((\partial_j u_k)^+ - (\partial_j u_k)^-)(\nu_{J_{\nabla u}})_i \, \H^{d-1}\mres_{J_{\nabla u}} = ((\partial_i u_k)^+ - (\partial_i u_k)^-)(\nu_{J_{\nabla u}})_j \, \H^{d-1}\mres_{J_{\nabla u}}
    \] 
    for every $i,j,k\in \{1,\dots,d\}$, which, in turn, yields 
    \begin{equation}
    \label{eq:distr-id}
        ((\partial_j u_k)^+ - (\partial_j u_k)^-)(\nu_{\nabla u})_i = ((\partial_i u_k)^+ - (\partial_i u_k)^-)(\nu_{\nabla u})_j
    \end{equation}
    $\H^{d-1}$-almost everywhere on $J_{\nabla u}$. 
    Now, let $x_0\in J_{\nabla u}$ be such that \eqref{eq:distr-id} holds. Up to possibly reversing the orientation of $\nu_{\nabla u}(x_0)$, we may assume that $\nabla u^+(x_0)=A(x_0)$ and $\nabla u^-(x_0)=B(x_0)$. Choosing $i_0:=i(x_0)\in \{1,\dots,d\}$ such that $(\nu_{\nabla u})_{i_0}(x_0)\neq 0$, condition \eqref{eq:distr-id} with $i = i_0$ becomes  
    $$ A(x_0)e_j - B(x_0)e_j = \xi (x_0)(\nu_{\nabla u})_j(x_0),$$
    for every $j\in\{1,\dots,d\}$, with $\xi(x_0) := \frac{A(x_0)e_{i_0} - B(x_0)e_{i_0}}{(\nu_{\nabla u})_{i_0}(x_0)}$. The lemma follows by observing that the above construction holds for $\H^{d-1}$-almost every $x_0\in J_{\nabla u}$.
\end{proof}

Next, we show that the rank-one connection between $A$ and $B$ can be described locally by a gradient field if $A$ and $B$ also are, i.e., as in \ref{H1}. This leads to a local version of \ref{H4}(i). 

\begin{lemma}[Local representation of the rank-one connection]
\label{lem:rep}
    Let $A$ and $B$ as in \ref{H1}. Then, for every $y \in \Omega$ there exists an open set $U \subset \Omega$ with $y\in U$, as well as two maps $\kappa \in C^1(\R;\R^d \setminus \lbrace 0 \rbrace)$  and $\gamma \in C^2(U)$     such that   
    \begin{equation}\nonumber
    %\label{eq:here-local}
        A(x) - B(x)  = \kappa(\gamma(x)) \otimes \nabla \gamma(x)
         \end{equation}
    for every $x\in U$.  
\end{lemma}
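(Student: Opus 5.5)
The plan is to integrate the rank-one connection directly. Set $\varphi := \alpha - \beta \in C^2(\overline\Omega;\R^d)$, so that $D := A - B = \nabla\varphi$. The $j$-th row of $D$ is then $\nabla\varphi_j$. Fix $y\in\Omega$. Since $D(y)\neq 0$ by \eqref{eq:delta-well}, I pick indices $(k,i)$ with $D_{ki}(y)\neq 0$, and by continuity $D_{ki}\neq 0$ on a neighborhood of $y$. Because ${\rm rank}\,D = 1$ there, every row is a multiple of the nonzero row $k$:
\[
\nabla\varphi_j = \frac{D_{ji}}{D_{ki}}\,\nabla\varphi_k \qquad \text{for all } j.
\]
I then choose $\gamma := \varphi_k$, which is $C^2$ with $\nabla\gamma\neq 0$ near $y$.

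Next I show that each $\varphi_j$ is a function of $\gamma$. Since $\nabla\gamma(y)\neq 0$, the inverse function theorem gives a $C^2$-diffeomorphism $\psi$ from a neighborhood of $y$ onto a box $(t_0-\delta,t_0+\delta)\times Q'$, with $\psi_1 = \gamma$. I take $U := \psi^{-1}\big((t_0-\delta,t_0+\delta)\times Q'\big)$. By construction $\gamma(U)$ is an interval and the level sets of $\gamma$ in $U$ are connected. The function $g_j := \varphi_j\circ\psi^{-1}$ is $C^2$, and by the chain rule its gradient is parallel to $e_1$, because $\nabla\varphi_j$ is parallel to $\nabla\gamma$. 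Hence $g_j$ depends only on the first variable on the box. So $\varphi_j = f_j(\gamma)$ on $U$ for some $f_j\in C^2$ of the interval $\gamma(U)$, and in particular $f_k(t) = t$.

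The chain rule then gives $\nabla\varphi_j = f_j'(\gamma)\nabla\gamma$, that is,
\[
A - B = f'(\gamma)\otimes\nabla\gamma \qquad \text{on } U.
\]
I set $\kappa := f'$ on $\gamma(U)$. It is $C^1$, and it is nowhere zero because its $k$-th component equals $1$. To get $\kappa\in C^1(\R;\R^d\setminus\{0\})$, I extend the other components to $C^1$ functions on $\R$ (e.g.\ after shrinking the interval slightly) and keep $\kappa_k\equiv 1$, which preserves nonvanishing.

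The step that needs the most care is the functional dependence $\varphi_j = f_j(\gamma)$ with $f_j\in C^2$. Parallel gradients alone only make $\varphi_j$ constant on connected components of level sets. This is exactly why $U$ is chosen through the straightening chart $\psi$: in that chart the dependence on $\gamma$ becomes explicit, and the regularity of $f_j$ follows from that of $\varphi_j\circ\psi^{-1}$.
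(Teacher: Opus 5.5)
Your proof is correct, but at the key step it takes a different and more elementary route than the paper.

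The two arguments share a common skeleton. Both take $\gamma$ to be a component $\alpha_k-\beta_k$ whose gradient is nonzero at $y$. Both use $\mathrm{rank}(A-B)=1$ to make every row $\nabla(\alpha_j-\beta_j)$ parallel to $\nabla\gamma$, and both straighten $\gamma$ with a local chart.

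The paper then writes $A-B=\zeta\otimes\nabla\gamma$ and shows that the \emph{coefficient} $\zeta$ is a function of $\gamma$. To do this, it inserts the representation into the symmetry of the second derivatives of $\alpha_j-\beta_j$. This makes $\nabla\zeta_j\otimes\nabla\gamma$ symmetric, hence $\nabla\zeta_j=\lambda\nabla\gamma$. Only then does it straighten $\gamma$ to conclude $\zeta_j=\kappa_j(\gamma)$.

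You apply the same functional-dependence argument one derivative lower, directly to $\varphi_j=\alpha_j-\beta_j$. This gives $\alpha-\beta=f(\gamma)$ on $U$, and you read off $\kappa=f'$ from the chain rule. Your route has several advantages:
\begin{itemize}
\item It avoids the second-order compatibility computation and the auxiliary multiplier $\lambda$ altogether.
\item It gives slightly stronger information: $\alpha-\beta$ itself is locally a function of $\gamma$.
\item Through the normalization $\kappa_k\equiv 1$ and the shrinking of the interval, it explicitly produces a nonvanishing $C^1$ extension of $\kappa$ to all of $\R$. The paper's proof leaves this point implicit.
\end{itemize}
Your choice of $U$ as the preimage of a box is exactly what guarantees connected level sets. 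The paper's initial choice $U=\{\nabla(\alpha_i-\beta_i)\neq 0\}$ has to be shrunk for the same reason.

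What the paper's version offers in return is that it is phrased in terms of the matrix field $\zeta\otimes\nabla\gamma$ and its compatibility condition. That is the form of the computation it later refers back to for the pushed-forward wells. For the lemma as stated, your argument is complete and cleaner.
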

\begin{proof}
    Fix $y\in U$. Let $\alpha, \beta$ be as in \ref{H1}. Then, there exists an index $i\in\{ 1, \ldots, d\}$ such that 
    \[
        \nabla(\alpha_i - \beta_i)(y) \neq 0.
    \]
    Set $$U := \{ x \in \Omega \colon \,  \nabla(\alpha_i - \beta_i)(x) \neq 0 \},$$
    as well as,  for every $x \in U$,
    \begin{gather*}
         \quad \gamma(x) := \alpha_i(x) - \beta_i(x), \quad \nu(x) := \frac{\nabla (\alpha_i(x) - \beta_i(x))}{|\nabla (\alpha_i(x) - \beta_i(x))|}, \text{ and } \\
        \quad \xi_j(x) := \nabla (\alpha_j(x) - \beta_j(x)) \cdot \nu (x) \quad \text{ for all }j = 1, \ldots,  d.
    \end{gather*}   
    Now, by definition, $\nu(x)$ is a positive multiple of the $i$-th row of $A(x)-B(x)$. Since ${\rm rank}\, (A(x) - B(x)) = 1$, every row of the matrix $A(x) - B(x)$ is a scalar multiple of $\nu(x)$ for every $x\in U$. In other words,
    $$\nabla (\alpha_j(x)-\beta_j(x))=\big(\nabla (\alpha_j(x)-\beta_j(x))\cdot \nu (x) \big) \,\nu(x)=\xi_j(x)\nu(x)$$
    for every  $j = 1,\ldots, d$ and $x\in U$. In particular,  for $\zeta = |\nabla \gamma|^{-1} \xi$ we have shown
    \begin{align}\label{ABC}
        A - B = \zeta \otimes \nabla \gamma.
    \end{align}
    Inserting \eqref{ABC} into $\partial_i \partial_k (\alpha_j - \beta_j) = \partial_k \partial_i (\alpha_j - \beta_j)$, direct computation yields 
    \[
        \nabla \zeta_j \otimes \nabla \gamma = (\nabla \zeta_j \otimes \nabla \gamma)^T
    \]
    for each component $j = 1, \ldots, d$.
    This, in turn, implies $ 
        \nabla \zeta_j = \lambda \nabla \gamma$ 
    for a suitable function $\lambda \in C^1(U)$. By using the Constant Rank Theorem for manifolds \cite[Theorem 4.12]{LeeSmoothManifolds} applied to $\gamma$, we restrict our neighborhood $U$ (without renaming) and derive the existence of inner and outer diffeomorphisms $f\colon\R^d \to \R^d$ and $g\colon \R \to \R$ such that
    \[
        \tilde \gamma(x) := (g \circ \gamma \circ f)|_{B_\eps(0)}(x) = x_1
    \]
    for some small $\eps > 0$. Denoting $\tilde \zeta_j := g \circ \zeta_j \circ f$ and $\tilde \lambda := (\lambda \circ f) \frac{g'\circ \zeta_j \circ f}{g'\circ \gamma \circ f}$,  by using $ 
        \nabla \zeta_j = \lambda \nabla \gamma$ 
    we derive on $B_\eps(0)$
    \[
        \nabla \tilde \zeta_j = \tilde \lambda e_1.
    \]
    We infer for $x \in B_\eps(0)$ that
    \[
        \tilde \zeta_j(x) = F_j(x_1) = F_j(\tilde \gamma(x))
    \]
    for some   $F_j \in C^1(\R)$. In particular, 
    \[
        \zeta_j = g^{-1} \circ \tilde \zeta_j \circ  f^{-1} =  g^{-1} \circ F_j \circ g \circ \gamma.
    \]
    Now, setting 
    \[
        \kappa_j := g^{-1} \circ F_j \circ g, 
    \]
    we conclude by \eqref{ABC}.     
 \end{proof}
   
As preparation for the proof of Theorem \ref{thm:characterisation-of-piecewise-BV-space}, we show that, if a point $x_0$ belongs to the jump set $J_{\nabla u}$, then the level set $\{x \colon \,  \gamma(x)  = \gamma(x_0)\}$ must be contained in the $J_{\nabla u}$, at least locally. 

%we further specify the \RRR laminate-like  structure of $J_{\nabla u}$. 
\begin{proposition}\label{prop:local-characterisation}
Let $A$ and $B$ be as in \ref{H1}. Assume that for an open set $U \subset \R^d$ we have
$$
    A(x) - B(x) = \kappa(\gamma(x)) \otimes \nabla \gamma(x)
$$ 
for every $x\in U$, where 
$\kappa \in C^1(\R;\R^d\setminus\{0\})$ and $\gamma \in C^2(U)$ with $\nabla \gamma \neq 0$ in  $U$.  Let $u \in X(\Omega)$. Then, for every compact  set  $K \subset U$ there exists $\eta = \eta(K)>0$ such that for every $z\in J_{\nabla u} \cap K$ the following holds: there exist $t_1,\ldots, t_N\in \R$ such that $t_i < t_{i+1}$ for every $i=1,\dots,N-1$, and
\begin{equation}\label{eqn:subsetInclusion}
J_{\nabla u} \cap B(z,\eta) = \bigcup_{i=1}^N \{x  \in U\colon \,   \gamma(x) = t_i\} \cap B(z,\eta).
\end{equation}
Additionally, up to an $\mathcal{L}^d$-null  set, either $\{\nabla u = A\} \cap B(z,\eta)$ or $\{\nabla u = B\} \cap B(z,\eta)$  coincides with $$ \bigcup_{i = 0}^{\lceil {(N-1)/2}\rceil } \{x\colon\,  \gamma(x)\in (t_{2i},t_{2i+1})  \}\cap B(z,\eta),$$
where for definiteness $t_0 = - \infty$ and $t_{N+1} = +\infty$.
\end{proposition}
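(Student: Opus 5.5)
Our approach is to straighten the level sets of $\gamma$ and then exploit the curl-free structure of $\nabla u$ to show that the phase indicator depends only on $\gamma$. Fix a compact $K\subset U$. Since $\gamma\in C^2(U)$ and $\nabla\gamma\neq0$, the Constant Rank Theorem (as in the proof of Lemma~\ref{lem:rep}) yields, around every point of $K$, a $C^2$-diffeomorphism $f$ from a box $(-r,r)^d$ onto a neighborhood of that point with $\gamma\circ f(y)=y_1+c$. By compactness of $K$ we extract finitely many such charts and choose $\eta>0$ smaller than a Lebesgue number. Then every ball $B(z,\eta)$ with $z\in K$ lies inside a single chart image, and its preimage is a convex-in-$y_1$ region, or can be shrunk to such a one. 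It therefore suffices to work in one chart $V=f((-r,r)^d)$.

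Set $\chi:=\chi_{\mathcal O}$ with $\mathcal O=\{\nabla u=A\}$. By Lemma~\ref{lem:Oset}, $\chi\in BV$. We write $\nabla u = B+\chi\,(A-B)=\nabla\beta+\chi\,(\kappa\circ\gamma)\otimes\nabla\gamma$. Taking the distributional curl row by row, and using that $\nabla u$ and $\nabla\beta$ are curl-free, gives
\[
\partial_i\big(\chi\,\kappa_k(\gamma)\partial_j\gamma\big)=\partial_j\big(\chi\,\kappa_k(\gamma)\partial_i\gamma\big)\quad\text{for all } i,j,k .
\]
Since $\kappa_k(\gamma)\partial_j\gamma=\partial_j(K_k\circ\gamma)$, where $K_k'=\kappa_k$ and the right-hand side is $C^1$, expanding yields $\kappa_k(\gamma)\,(D\chi\otimes\nabla\gamma-\nabla\gamma\otimes D\chi)=0$ as measures. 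Choosing $k$ with $\kappa_k(\gamma)\neq0$ locally, which is possible because $\kappa\neq0$, we obtain $D\chi\wedge\nabla\gamma=0$: the measure $D\chi$ is parallel to $\nabla\gamma$. Pulling back by $f$, the function $\tilde\chi=\chi\circ f$ satisfies $D\tilde\chi\cdot(Df)^{-1}\wedge e_1\,\text{-type}=0$, that is, the derivatives of $\tilde\chi$ along the directions spanning $\{y_1=\text{const}\}$ vanish. To avoid a computation with measures, we may instead test directly: for tangential vector fields $\tau=Df\,e_l$ with $l\ge2$, which satisfy $\tau\cdot\nabla\gamma=0$, we get $\tau\cdot D\chi=0$. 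On the box, $\partial_{y_l}\tilde\chi=0$ for $l\ge2$ in the distributional sense, so $\tilde\chi(y)=g(y_1)$ for some $g\in BV((-r,r);\{0,1\})$.

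A one-dimensional $BV$ function with values in $\{0,1\}$ has finitely many jumps $s_1<\dots<s_N$ and alternates between $0$ and $1$. Translating back through $\gamma\circ f=y_1+c$ with $t_i=s_i+c$, we see that $\mathcal O\cap B(z,\eta)$ coincides, up to a null set, with the union of alternating strips $\{\gamma\in(t_{2i},t_{2i+1})\}$, or else its complement does; this is the second claim. The jump set satisfies $J_{\nabla u}=\partial^*\mathcal O$ up to $\H^{d-1}$-null sets by Lemma~\ref{lem:Oset}. Moreover, since $\tilde\chi$ has jumps exactly on $\{y_1=s_i\}$, every point of these level sets is a genuine jump point, because the one-sided traces are $A\neq B$. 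This gives \eqref{eqn:subsetInclusion} with equality. Finally, we discard the $t_i$ whose level set misses $B(z,\eta)$.

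\textbf{Main obstacle.} The delicate points are the uniformity of $\eta$ over $K$ and the geometry: $B(z,\eta)$ need not map to a box, and a level set $\{\gamma=t\}\cap B(z,\eta)$ might be disconnected. To handle this, we will choose $\eta$ so that $B(z,2\eta)$ lies inside a chart whose preimage contains a box $Q$ with $f^{-1}(B(z,\eta))\subset Q$. We then argue on $Q$, where every slice $\{y_1=s\}\cap Q$ is connected, and only afterwards intersect with the ball. The finiteness of $N$, with a bound independent of $z$, follows from $|D\chi|(V)<\infty$ together with the fact that each jump strip contributes at least a fixed amount of $\H^{d-1}$-measure inside $Q$.
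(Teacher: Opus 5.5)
Your proposal is correct and follows essentially the paper's route. Both arguments use Constant Rank charts plus compactness to get a uniform $\eta$, then the fact that $D\chi_{\mathcal O}$ is parallel to $\nabla\gamma$, and finally one-dimensional dependence of the pulled-back indicator with finitely many jumps. The one difference is that you obtain the parallelism directly from the distributional curl of $\nabla\beta+\chi\,(\kappa\circ\gamma)\otimes\nabla\gamma$, whereas the paper combines Lemma~\ref{lemma-rank-1-necessity}, Lemma~\ref{lem:Oset} and Maggi's change-of-variables formula for reduced boundaries.

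Taking the chart domain to be a box is a small advantage over the paper's ball. Every slice then carries the same $\H^{d-1}$-measure, so finite perimeter bounds the number of jumps directly, with no possibility of jumps accumulating near the boundary of the chart.
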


\begin{proof}
By possibly reducing to a smaller set, we can assume that  $U$ has Lipschitz boundary. Since we assume that $\nabla \gamma \neq 0$, we can apply the Constant Rank Theorem \cite[Theorem 4.12]{LeeSmoothManifolds} to $\gamma$ near $z \in K$ to find $r_z > 0$ and diffeomorphisms $\psi_{z}\colon\R\to \R$ and $\Phi_z \colon {B_z} \to \Phi_z({B_z}) \subset U$ for $B_z := B(0, r_z)$ such that 
$${ \psi_z \circ \gamma \circ \Phi_z(x) = x_1 \quad \quad \text{ and }\quad \quad \Phi_z(0) = z, \quad \psi_z(\gamma(z)) = 0 }.$$
This means that the function $\Phi_z^{-1}$ pulls the level sets of $\gamma$ back to hyperplanes locally. This implies that for all $x \in B_z$ we have 
\begin{equation}\label{eqn:orthogonalRelation}
(\nabla \gamma\circ \Phi_z)(x) \nabla \Phi_z(x) = (\psi_z^{-1})'(x_1)e_1.
\end{equation} 
Now, let $\tilde r_z > 0$  be   such that $B(z, 2\tilde r_z) \subset \Phi_z(B_z)$. Since $K$ is compact, we find finitely many $   \lbrace z_k \rbrace_{k=1}^m  \subset K$  such that 
\[
   K \subset \bigcup_{k = 1}^m B(z_k,  \tilde r_{z_k}) \subset \bigcup_{k = 1}^m  \Phi_{z_k}(B_{z_k}).
\]
Set $\eta = \min_{k \in \{1, \ldots,  m\}} \tilde r_{z_k}$. We fix   $z \in K$ and notice that $B(z, \eta) \subset B(z_{k_0}, 2\tilde r_{z_{k_0}}) \subset \Phi_{z_{k_0}}(B_{z_{k_0}})$ for some $k_0 \in \{1,  \ldots, m\}$. In particular, for $\Phi := \Phi_{z_{k_0}}$ and $\psi := \psi_{z_{k_0}}$ on $ B := B_{z_{k_0}}$, we have \eqref{eqn:orthogonalRelation}. Further, recalling Lemma \ref{lem:Oset} with $\mathcal{O}: = \{x\in \Omega\colon \,  \nabla u(x) = A(x)\}$, we now apply \cite[Proposition 17.1]{maggi_2012} 
to conclude that $\mathcal{U}:= {\Phi}^{-1} (\mathcal{O})\cap B $ is a set of finite perimeter and satisfies the relation
\begin{equation}\label{eqn:integralMaggi}
\int_{\partial^* \mathcal{U}} \phi \, \nu_{\mathcal{U}} \, d\H^{d-1} = \int_{\partial^* \mathcal{O}} \phi\circ \Phi^{-1} (J\Phi^{-1}) (\nabla \Phi \circ \Phi^{-1})^T \nu_{\mathcal{O}} \, d\H^{d-1}
\end{equation}
for all $\phi \in C_c( B )$ where $J\Phi^{-1}$ is the Jacobian of $\Phi^{-1}$.
Since by Lemma \ref{lem:rep} it holds that $\nu_{\mathcal{O}} =\pm \frac{\nabla \gamma^T}{|\nabla \gamma|}$, property 
\eqref{eqn:orthogonalRelation} (up to transposes) entails that
$$ (\nabla \Phi \circ \Phi^{-1})^T \nu_{\mathcal{O}} = \beta e_1 \quad  \text{ for some } \beta = \beta(x)\in \R.$$
It follows that the only nonzero component of the vector in \eqref{eqn:integralMaggi} is the first component, and a localization argument gives that $\nu_{\mathcal{U}} = \pm e_1$ $\H^{d-1}$-a.e.\ on $\partial^*\mathcal{U}.$ A mollification argument (cf.\ \cite[Proposition 15.15]{maggi_2012}) can then be used to show that $\chi_{\mathcal{U}}$ depends only on $x_1$. Since $\mathcal{U}$ has finite perimeter (in $\Omega$), there exist increasing $s_1,\ldots, s_N\in \R$ (with $s_0 = -\infty$ and $s_{N+1} = +\infty$)  such that either $\mathcal{U}$ or $ B \setminus \mathcal{U}$ is given by 
$${\bigcup_{i = 0}^{ \lceil {(N-1)/2}\rceil } \big\{x = (x_1,\ldots,x_d)\colon \,  x_1\in (s_{2i},s_{2i+1}) \big\}\cap B.}$$
Carrying this information to $\mathcal{O} \cap B(z, \eta)$ using the diffeomorphism $\Phi$, we obtain the thesis for the ball $B(z,\eta)$ with $t_i = \psi^{-1} (s_i)$ or $t_i = \psi^{-1} (  s_{N-i + 1})$ (depending if $\psi$ is increasing or decreasing). 
\end{proof}

We summarize the above findings. By Lemma \ref{lemma-rank-1-necessity}, if there are nontrivial elements in  $X(\Omega)$, then $A$ and $B$ must be rank-one connected along the jump set. If we want the jump set to include some surface, this forces the rank-one condition to hold pointwise a.e.\ along the surface, thereby justifying the need for \ref{H1}. Moreover, Lemma \ref{lem:rep} characterizes the relation between the rank-one connection and a local gradient field: there is a function $\gamma$ such that the rows of $A-B$ are orthogonal to its level sets. This points towards an assumption like \ref{H4}. 
%The norm of the rows are a function of $\gamma$. 
When \ref{H1} holds,   Lemma \ref{lem:rep} together with Proposition \ref{prop:local-characterisation} for $u\in X(\Omega)$ shows that the jump set $J_{\nabla u}$ locally consists of finitely many connected surfaces, and on each of  these surfaces the rank-1-connection is of the form $F \otimes \nabla \gamma(x)$ for a constant $F \in \R^d$.

We are now in a position to prove the main result of this section, which effectively shows that the conclusion of the previous paragraph holds in an open set $U$, provided that one prescribes the rank-one connection.  

\begin{proof}[Proof of Theorem \ref{thm:characterisation-of-piecewise-BV-space}] 
Fix $z \in J_{\nabla u} \cap U$. We show that $\emptyset \neq J_{\nabla u}\cap K_z$ is relatively open and closed in $K_z$, which implies it must be equal to $K_z$. Indeed, by Proposition \ref{prop:local-characterisation}, the set $J_{\nabla u}\cap K_z$ is relatively open in $K_z$. To see that it is closed, consider a sequence $\{y_i\}_{i\in \N} \subset J_{\nabla u}\cap K_z$ converging to a point $y \in K_z$. Let $\overline{B(y, \eps)} \subset U$ for some $\eps > 0$. By Proposition \ref{prop:local-characterisation} we obtain the existence of $\eta_\eps = \eta(\overline{B(y, \eps)})$  such that for each $y_i \in \overline{B(y, \eps)}$ (which holds for $i$ sufficiently large) we get that inside  $B(y_i, \eta_\eps)$ the jump $J_{\nabla u}$ has the structure as given in  \eqref{eqn:subsetInclusion}.  Now, we choose $i_0  \in \N$ sufficiently large such that 
\[
    y_i \in B(y,\eta_\eps/4) \subset B(y_{i_0}, \eta_\eps/2)  \qquad \text{ for all } i \geq i_0
\]
holds. Since $\overline{B(y_{i_0}, \eta_\eps/2)} \cap J_{\nabla u}$ is a closed subset of $\R^d$ (by \eqref{prop:local-characterisation})  and since $y_i$ converges to $y$  we infer $y \in J_{\nabla u}$. This concludes the proof.  \end{proof}

\section{Compactness}
\label{sec:compactness}
In this section, we show that sequences of deformations with equibounded energy converge (up to subsequences) strongly in $H^1(\Omega; \R^d )$ to maps $u\in X(\Omega)$, see \eqref{eq:X}. 
Before stating our compactness result, we introduce a geodesic distance which generalizes the geodesic cell-energy density  \eqref{eqn:geoCellEnergy} by measuring how much energy must be expended to move between any two points $N$ and $M$ along a path in $\R^{d\times d}$:
For every $ x_0 \in \Omega$, and every $M, N  \in \R^{d\times d}$, we set
$$
\begin{aligned}
&d_W(x_0, M, N) : =  \inf \left\{\int_{-1}^1 2\sqrt{W(x_0,\Phi)}|\Phi'|\, dt \colon \,  \Phi \in W^{1,1}((-1,1);\R^{d\times d}) \text{ with }\Phi(-1)=M, \ \Phi(1) = N\right\}.
\end{aligned} $$
In the sequel, it will be convenient to work with a truncated geodesic distance. To this end, given $C_0>0$, we introduce the truncated potential
\begin{align}\label{Wtrunc}
    \Wtrunc(x,M):=c\min \{|M - A(x)|^2, |M - B(x)|^2, 1 \} \quad \text{for every $x\in \overline{\Omega} $ and $M\in \R^{d\times d}$}
\end{align} 
where $c > 0$ is chosen such that  
\begin{equation}\label{eqn:WtoWtilde}
    W \geq \Wtrunc,
\end{equation}
which is possible due  to \ref{H2}.
 We state an estimate on the gradient of the truncated geodesic distance.  
\begin{lemma}[Lipschitz properties]\label{geodesic-distance-gradient-estimates-new}
   Let $W$ satisfy {\rm \ref{H1}--\ref{H3}} and define $ \Wtrunc$ as in \eqref{Wtrunc}.  
    Then, there exists $C>0$ such that the function $f_\Wtrunc(x,M) := d_\Wtrunc(x,A(x),M)$ satisfies      \begin{equation}\label{eqn:gradMest}
        |\nabla_M f_{\Wtrunc}(x,M)| \leq C\sqrt{\Wtrunc(x, M)}
    \end{equation}
    and 
    \[
        |\nabla_x f_{\Wtrunc}(x,M)| \leq C(1 + |M|)
    \]
    for almost  every $x\in \Omega$ and $M\in \R^{d\times d}$.
\end{lemma}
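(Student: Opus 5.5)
We plan to show that $f_\Wtrunc(x,\cdot)$ and $f_\Wtrunc(\cdot,M)$ are locally Lipschitz with the stated bounds, and then invoke Rademacher's theorem so that the pointwise gradient estimates hold a.e. Throughout we use that $\Wtrunc\le c$, that $\sqrt{\Wtrunc(x,M)}=\sqrt c\min\{|M-A(x)|,|M-B(x)|,1\}$ is $\sqrt c$-Lipschitz in $M$, and that it is Lipschitz in $x$ with constant $\sqrt c\,(\|\nabla A\|_\infty+\|\nabla B\|_\infty)$, uniformly in $M$. This is where the truncation helps: we get uniform constants without appealing to \ref{H3} on unbounded sets, and $A,B\in C^1(\overline\Omega)$ by \ref{H1}.

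\emph{Estimate in $M$.} By the triangle inequality for the geodesic distance (concatenate paths after reparametrization, which is admissible since the integrand $\sqrt{\Wtrunc}|\Phi'|$ is invariant under reparametrization), we have $|f_\Wtrunc(x,M)-f_\Wtrunc(x,N)|\le d_\Wtrunc(x,M,N)$. Taking the straight segment from $M$ to $N$ gives
\[
d_\Wtrunc(x,M,N)\le 2|M-N|\max_{P\in[M,N]}\sqrt{\Wtrunc(x,P)}\le 2|M-N|\big(\sqrt{\Wtrunc(x,M)}+\sqrt c\,|M-N|\big).
\]
Dividing by $|M-N|$ and letting $N\to M$ yields \eqref{eqn:gradMest} at every point of differentiability, with $C=2$.

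\emph{Estimate in $x$.} Fix $x,y$ and a near-optimal path $\Phi$ from $A(x)$ to $M$ for $d_\Wtrunc(x,\cdot,\cdot)$. We build a competitor for $f_\Wtrunc(y,M)$ by first joining $A(y)$ to $A(x)$ with a segment, which costs at most $2\sqrt c\,|A(x)-A(y)|\le C|x-y|$. We then follow $\Phi$, and the cost changes by at most
\[
2\int|\sqrt{\Wtrunc(y,\Phi)}-\sqrt{\Wtrunc(x,\Phi)}|\,|\Phi'|\le C|x-y|\,\mathrm{length}(\Phi).
\]
The main obstacle is therefore to bound the length of a near-optimal path by $C(1+|M|)$; this is precisely the uniformly-bounded-length geodesic issue mentioned in the introduction. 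In the truncated setting we handle it directly. Away from the $1$-neighborhoods of $A(x)$ and $B(x)$ we have $\Wtrunc=c$, so the energy there is $2\sqrt c$ times the length. We may replace $\Phi$ by a path consisting of a segment from $M$ to the boundary of $B_1(A(x))$ (or of $B_1(B(x))$), followed by a radial segment to $A(x)$, possibly passing through the other ball. The cost of this path is at most $C(1+|M-A(x)|)$, and its length is at most $C(1+|M|)$ because $|A|,|B|$ are bounded. This is not necessarily optimal, however.

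To keep optimality we would instead argue as follows. Any path of energy at most $f_\Wtrunc(x,M)+\delta\le C(1+|M|)$ spends length at most $C(1+|M|)$ outside the two unit balls. Inside each ball, on any sub-arc we may replace $\Phi$ by radial projections in a star-shaped way, since $|P-A(x)|$ is the potential, without increasing energy. So each visit to a ball can be shortened to a length-$\le 2$ piece, and the number of visits is bounded by the outside length divided by the minimal crossing length $|A-B|-2$. If this is not positive, we shrink the truncation level from $1$ to something below $\delta/2$ in \eqref{Wtrunc}, with $\delta$ from \eqref{eq:delta-well}. This gives the length bound $C(1+|M|)$.

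Combining the two steps and exchanging the roles of $x$ and $y$ gives $|f_\Wtrunc(x,M)-f_\Wtrunc(y,M)|\le C(1+|M|)|x-y|$ locally. Hence $f_\Wtrunc$ is locally Lipschitz, and Rademacher's theorem yields both gradient bounds for a.e. $x$ and all $M$.
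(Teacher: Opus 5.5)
Your $M$-estimate is the paper's argument. So is your reduction of the $x$-estimate to a length bound for near-optimal paths: a segment joining $A(y)$ and $A(x)$, concatenated with a near-optimal path, using that $\sqrt V$ is Lipschitz in $x$ uniformly in $M$, so \ref{H3} is indeed not needed here. The paper takes that length bound from Theorem \ref{existence-of-geodesics-with-bounded-path-length} and concludes via Lemma \ref{geodesic-distance-gradient-estimates}. The gap is in your direct proof of the length bound, i.e.\ at exactly the step you call the main obstacle.

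\textbf{The gap.} Near a well $L\in\{A(x),B(x)\}$ the weight $\sqrt V=\sqrt c\,|P-L|$ degenerates. A path within $\eta$ of optimal can therefore have arbitrarily large length close to $L$ at cost $O(\eta)$, so a genuine modification is required there. ``Radial projections in a star-shaped way'' do not supply one.
\begin{itemize}
\item The nearest-point (radial) projection onto $\overline{B_1(L)}$ only removes excursions outside the ball; it does not shorten the arc inside it.
\item The radial replacement through the centre costs $\sqrt c\,(|P_1-L|^2+|P_2-L|^2)$. This can be much larger than the arc it replaces, for instance when $P_1,P_2$ are close together on $\partial B_1(L)$. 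So your claim that each visit can be shortened to length at most $2$ without increasing energy is unproved.
\item The counting is also off. Outside length divided by $|A-B|-2$ bounds the number of passages \emph{between} the two balls, not the number of visits to one ball, which may be re-entered after arbitrarily short excursions.
\item Lowering the truncation level in \eqref{Wtrunc} changes $V$ and hence $f_V$, so you would be proving the lemma for a different function.
\end{itemize}

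\textbf{How to repair it along your lines.} Keep $V$, set $\delta:=\min_{\overline\Omega}|A-B|$, and work with balls of radius $r_0:=\min\{1,\delta/4\}$.
\begin{itemize}
\item Outside $B_{r_0}(A(x))\cup B_{r_0}(B(x))$ one has $\sqrt V\ge \sqrt c\,r_0$. This bounds the outside length.
\item The same inequality shows that projecting onto $\overline{B_{r_0}(L)}$ the portion between first entry into and last exit from that ball, before the other ball is reached, does not increase energy. This merges visits, so only passages between the two balls, each of length at least $\delta/2$, need to be counted.
\item Inside $B_{r_0}(L)$ the weight is exactly $\sqrt c\,|P-L|$. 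You then need that minimizers of $\int|P-L|\,|dP|$ between $P_1$ and $P_2$ have Euclidean length at most $|P_1-L|+|P_2-L|$. This follows by projecting onto the plane through $L,P_1,P_2$ and using that $z\mapsto (z-L)^2/2$ carries this weight to the flat metric. Minimizers are then either two radii or the lift of a straight segment, and both satisfy the bound.
\end{itemize}
This gives a self-contained proof that exploits the exact cone structure of the truncated potential.

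\textbf{Comparison with the paper.} The paper instead accepts an $O(2^{-m})$ energy loss. It replaces the path near each well by straight lines corrected by $A\circ\psi$. It then controls the length in dyadic annuli through local optimality (Lemma \ref{geodesics-are-local-geodesics}) against straight competitors. That argument uses no symmetry of the weight. This is why it also applies to general $W$ and non-constant $\psi$, as needed in Proposition \ref{comparision-geodesic-distances} and Lemma \ref{lemma:rescaling_of_nearly_geodesics}.
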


The proof of the result will be given in Section \ref{sec:geodesics-disc}, see in particular Lemma \ref{geodesic-distance-gradient-estimates}. Now, we come to the compactness result. 

\begin{theorem}[Compactness]
\label{mainthm:compactness}
    Let $W$ satisfy {\rm \ref{H1}}--{\rm \ref{H3}}. Suppose that for a sequence $\{u_n\}_{n \in \N} \subset H^2(\Omega; \R^d)$ and $\eps_n \to 0^+$ we have
    \begin{align}\label{bounded-energy}
        \sup_{n \in \N} E_{\epsilon_n}(u_n) < + \infty.
    \end{align}
    Then, there exists a subsequence (not relabeled) and $u \in X(\Omega)$ such that
    \[
        u_{n} - \frac{1}{|\Omega|}\int_\Omega u_{n}  \, dx \to u \text{ strongly in } H^1(\Omega;\R^d).
    \]
   
\end{theorem}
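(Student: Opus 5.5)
We combine uniform $H^1$ bounds, a Young-measure argument showing the gradients concentrate on the wells, and a Modica--Mortola-type $BV$ estimate based on Lemma \ref{geodesic-distance-gradient-estimates-new}.

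\emph{Step 1: bounds and weak limits.} By \ref{H2}, $W(x,M)\ge \frac{1}{C_*}\min\{|M-A(x)|^2,|M-B(x)|^2\}$, and $A,B$ are bounded. Hence
\[
\int_\Omega |\nabla u_n|^2\,dx \le C\Big(1+\int_\Omega W(x,\nabla u_n)\,dx\Big)\le C(1+\eps_n E_{\eps_n}(u_n)),
\]
so $\nabla u_n$ is bounded in $L^2$. Poincaré's inequality on the Lipschitz set $\Omega$ then bounds $v_n:=u_n-\frac{1}{|\Omega|}\int_\Omega u_n\,dx$ in $H^1$. Moreover, $\int_\Omega W(x,\nabla u_n)\,dx\le C\eps_n\to0$.

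Let $\mu_x$ be the Young measure generated by $\{\nabla u_n\}$, after passing to a subsequence. Since $W\ge0$ is continuous,
\[
\int_\Omega\int W(x,M)\,d\mu_x(M)\,dx \le \liminf_n \int_\Omega W(x,\nabla u_n)\,dx=0 .
\]
Therefore $\mu_x$ is supported on $\{A(x),B(x)\}$ for a.e.\ $x$, that is, $\mu_x=\lambda(x)\delta_{A(x)}+(1-\lambda(x))\delta_{B(x)}$.

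\emph{Step 2: $BV$ compactness of a scalar quantity.} Set $w_n(x):=f_\Wtrunc(x,\nabla u_n(x))=d_\Wtrunc(x,A(x),\nabla u_n(x))$. Since $\Wtrunc$ is bounded, $d_\Wtrunc$ is bounded in $M$: any two matrices can be joined through the truncated region at a cost of order $\sqrt c$ times the length of a path leaving a neighbourhood of the wells. The function $f_\Wtrunc$ is Lipschitz, and $\nabla u_n\in H^1$, so the chain rule applies. Using Lemma \ref{geodesic-distance-gradient-estimates-new}, \eqref{eqn:WtoWtilde} and Young's inequality,
\[
|\nabla w_n|\le C\sqrt{\Wtrunc(x,\nabla u_n)}\,|\nabla^2u_n|+C(1+|\nabla u_n|)\le C\Big(\tfrac1{\eps_n}W(x,\nabla u_n)+\eps_n|\nabla^2u_n|^2\Big)+C(1+|\nabla u_n|).
\]
Consequently $w_n$ is bounded in $BV(\Omega)$, and up to a subsequence $w_n\to w$ in $L^1$.

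On the wells, $f_\Wtrunc(x,A(x))=0$ and $f_\Wtrunc(x,B(x))=:\delta(x)$. By well-separation and continuity, $\delta(x)\ge\delta_0>0$ on $\overline\Omega$. By Step 1, $w$ is the limit of $w_n$ tested against $\mu_x$. Strong $L^1$ convergence forces the Young measure of $w_n$ to be the Dirac mass $\delta_{w(x)}$. This is compatible with $\mu_x$ only if $\lambda(x)\in\{0,1\}$, since $0\ne\delta(x)$. Hence $\mu_x$ is a Dirac mass at $M(x)\in\{A(x),B(x)\}$. A Dirac Young measure yields strong convergence in measure, and together with the $L^2$ bound this gives $\nabla u_n\to M$ strongly in $L^p$ for $p<2$.

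Upgrading to $L^2$ is the main obstacle. The plan is to use equi-integrability of $|\nabla u_n|^2$ coming from \ref{H2}: on the set where $\nabla u_n$ is far from both wells, $|\nabla u_n|^2\le C\,W(x,\nabla u_n)$, whose integral vanishes. Near the wells, $|\nabla u_n|$ is bounded. Vitali's theorem then gives $\nabla u_n\to M$ in $L^2$. With Poincaré's inequality and Rellich's theorem, this yields $v_n\to u$ in $H^1$ with $\nabla u=M$.

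\emph{Step 3: $M\in BV$.} We have $w=\delta(x)\chi_{\{M=B\}}\in BV(\Omega)$. Since $\delta$ is Lipschitz (Lemma \ref{geodesic-distance-gradient-estimates-new} applied to $x\mapsto f_\Wtrunc(x,B(x))$, with $B\in C^1$) and $\delta\ge\delta_0$, the quotient $\chi_{\{M=B\}}=w/\delta$ lies in $BV$. Then
\[
M=A+(B-A)\chi_{\{M=B\}}\in BV(\Omega;\{A(\cdot),B(\cdot)\}),
\]
because $A,B\in W^{1,\infty}$ and the product of a $W^{1,\infty}$ function with a bounded $BV$ function is $BV$. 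Hence $u\in X(\Omega)$.

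The delicate points are the Lipschitz bound for $\delta$, which relies on Lemma \ref{geodesic-distance-gradient-estimates-new} and the $\nabla_x$ estimate, and the equi-integrability argument in Step 2.
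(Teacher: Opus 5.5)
Your proposal is correct and follows essentially the same route as the paper: a Young measure supported on $\{A(x),B(x)\}$, a $W^{1,1}$ bound for $f_{\Wtrunc}(x,\nabla u_n)$ via Lemma~\ref{geodesic-distance-gradient-estimates-new} and Young's inequality, the Dirac limit forcing $\lambda(x)\in\{0,1\}$, and equi-integrability of $|\nabla u_n|^2$ for the $L^2$ upgrade; your Vitali argument and your division by the Lipschitz, positive $\delta$ are harmless variants of the paper's norm-convergence and coarea arguments. One small slip: $d_{\Wtrunc}(x,A(x),\cdot)$ is not bounded but grows linearly, $|f_{\Wtrunc}(x,M)|\le C|M-A(x)|$ because $|\nabla_M f_{\Wtrunc}|\le C\sqrt{\Wtrunc}\le C$, and together with your $L^2$ bound this still gives the $L^1$ bound on $w_n$ that you need.
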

\begin{proof}

\emph{Step 1 (Weak convergence of $u_n$).} Up to a translation, it is not restrictive to assume that $\int_\Omega u_{\eps_n}  \,  dx =0$ for all $n \in \N$. In view of assumption \ref{H2} and $A,B \in L^\infty(\Omega;\R^{d\times d})$,  by \eqref{bounded-energy}  we have 
    \begin{equation}\label{eqn:aPrioriH1}
        \int_\Omega |\nabla u_n|^2 \, dx \leq C \int_\Omega W(x, \nabla u_n)\,dx + C|\Omega|  \leq C.
    \end{equation}
 Moreover, for all $t$ sufficiently large depending only on $\Vert A \Vert_\infty, \Vert B \Vert_\infty$ we find  
    \[
        \int_{\{ |\nabla u_n|  > t  \}} |\nabla u_n|^2 \, dx \leq C\eps_n.
    \]
From this it follows that 
    \begin{align}\label{eq:2-integrability}
        \lim_{t \to \infty}  \sup_{n \in \N} \int_{\{ |\nabla u_n| > t  \}} |\nabla u_n|^2 \, dx = 0,
    \end{align}
    i.e.,  the equi-integrability of $|\nabla u_n|^2$ and, consequently, equi-integrability of $\nabla u_n$. In particular, $\{u_n\}_{n \in \N}$ is equi-bounded in $H^1(\Omega; \R^d)$. Up to extracting a  subsequence (not relabeled), we may assume that there exists  $u\in H^1(\Omega; \R^d)$ and a gradient Young measure $\lbrace \eta_x\rbrace_{x\in\Omega}$, such that  $u_{n}  \rightharpoonup u$ weakly in $H^1(\Omega; \R^d$) and $\nabla u_{n}  $ generates $ \lbrace \eta_x \rbrace_{x\in \Omega}$. Now, by the fundamental theorem of Young measures \cite[Theorem 8.6]{FonsecaLeoni07},
   we infer
    \[
        \int_\Omega \int_{\R^{d \times d}} W(x, M) \, d\eta_x(M) \, dx \leq \lim_{n \to \infty} \int_\Omega W(x, \nabla u_n(x))\, dx = 0,
    \]
    which implies 
    \[
        \int_{\R^{d \times d}} W(x, M) \, d\eta_x(M) = 0
    \]
    for almost every $x \in \Omega$.  Hence, by \ref{H2}, we see that the measure $\eta_x$ must be supported on $\{ A(x), B(x) \}.$  In particular, since $\eta_x$ is a probability measure, we find
    \begin{equation} \label{eqn:etaYoung}
        \eta_x = \theta_x \delta_{A(x)} + (1 - \theta_x) \delta_{B(x)} \quad 
    \end{equation}
    for some $\theta_x\in [0,1]$, where $\delta_{M}$ is the Dirac measure  at $M$.

   \emph{Step 2 (Convergence of geodesic distance).}
    Recalling   the function $f_{\Wtrunc}$ introduced in Lemma \ref{geodesic-distance-gradient-estimates-new}, define 
    \[
        \phi_n(x) := f_{\Wtrunc}(x, \nabla u_n(x))
    \]
    for $x\in \Omega$.
    Since $f_{\Wtrunc}$ is locally Lipschitz continuous by Lemma \ref{geodesic-distance-gradient-estimates-new}, the chain rule yields
    \[
        |\nabla \phi_n| \leq |\nabla_x f_{\Wtrunc}( \cdot, \nabla u_n)| + |\nabla_M f_{\Wtrunc}(\cdot, \nabla u_n)||\nabla^2 u_n| \leq C\left(1 + |\nabla u_n| + \sqrt{\Wtrunc(\cdot, \nabla u_n)}|\nabla^2 u_n|\right).
    \]
    Therefore, by \eqref{eqn:aPrioriH1}, a classical Young's inequality argument, and \eqref{eqn:WtoWtilde}, we deduce
    \[
        \int_\Omega |\nabla \phi_n (x)| \, dx \leq C(1 + E_{\eps_n}(u_n)).
    \]
    By \eqref{eqn:gradMest} and by the boundedness of $\Wtrunc$, we infer that 
    $$|\phi_n(x)| = |f_{\Wtrunc}(x,\nabla u_n(x)) - f_{\Wtrunc}(x,A(x))|\leq  C  |\nabla u_n(x) - A(x)|.$$ 
    By the triangle inequality, Young's inequality, and the boundedness of $A$, this further implies that
    \[
        \int_\Omega |\phi_n| \, dx \leq C\left(1 +   \int_\Omega |\nabla u_n|^2 \, dx\right).
    \]
    In particular, the sequence $\lbrace \phi_n\rbrace_{n \in \N}$ is equibounded in $W^{1,1}(\Omega)$. Thus, there exists a (not relabeled) subsequence such that 
    \begin{equation}\label{eqn:phiL1converge}
        \phi_n \to \phi  \text{ in $L^1(\Omega)$ }
    \end{equation}
     for $\phi \in BV(\Omega)$, and $\phi_n$ generates a Young measure $\lbrace \rho_x\rbrace_{x\in \Omega}$ as $n \to \infty$. Since \eqref{eqn:phiL1converge} implies convergence in measure, we find (cf.\ \cite[Corollary 8.9]{FonsecaLeoni07})  
    \[
        \rho_x = \delta_{\phi(x)} \quad \text{ for a.e.\ $x \in \Omega$}.
    \]

    \emph{Step 3 (Identification of the limit).}
    Recalling the Young measures $\lbrace \eta_x\rbrace_{x\in \Omega}$ and $\lbrace \rho_x\rbrace_{x\in \Omega}$ from Steps~1 and~2 (see also \eqref{eqn:etaYoung}), and testing against $F = h \otimes \varphi$   where $h \in L^1(\Omega)$ and $\varphi \in C_0( \R )$ we infer 
    \begin{align}\label{3X}
        \int_\Omega \int_{\R} F(x,y) \, d\delta_{\phi(x)}(y) \, dx &= \lim_{n \to \infty} \int_\Omega F(x,\phi_n(x)) \, dx \notag  \\
        &= \lim_{n \to \infty} \int_\Omega F(x, f_V(x, \nabla u_n(x)) \, dx \notag  \\
        &= \int_\Omega \int_{\R^{d\times d}} F(x,f_{\Wtrunc}(x, M)) \, d\eta_x(M) \, dx. 
    \end{align}
    Here, we used that 
    $$
        |{ F(x,  \phi_n  (x))|  \leq C|h(x)|}
    $$ 
   implies equi-integrability of $F(x,  \phi_n (x))$, so we can also apply the fundamental theorem for Young measures for $\eta$, whereas the first equality simply follows by the definition of a Young measure (cf.\ \cite[Definition 8.3]{FonsecaLeoni07}). Since
    \begin{align*}
        \int_\Omega \int_{\R^{d\times d}} F(x,f_{\Wtrunc}(x, M)) \, d\eta_x(M) \, dx 
        &
        = \int_\Omega \big(\theta_x F(x,f_{\Wtrunc}(x,A(x)))+ (1 - \theta_x) F(x,f_{\Wtrunc}(x, B(x))) \big)\, dx \\ & =  \int_\Omega \int_{\R} F(x,y) \, d\big(\theta_x\delta_{f_{\Wtrunc}(x,A(x))} (y) + (1 - \theta_x)\delta_{f_{\Wtrunc}(x, B(x))} (y) \big)\, dx,
    \end{align*}
    we thus infer, by \eqref{3X}, 
    \begin{equation}\label{eqn:DiracMeasureRel}
        \delta_{\phi(x)} = \theta_x\delta_{f_{\Wtrunc}(x,A(x))} + (1 - \theta_x)\delta_{f_{\Wtrunc}(x, B(x))} \quad \text{ for a.e.\ $x \in \Omega$.}
    \end{equation} This implies that $\theta_x \in \{0, 1\}$ for almost every $x \in \Omega$. Next, we define a set  $E\subset \Omega$ by the relation 
    $
        \chi_E(x)  = \theta_x
    $
    for a.e.\ $x \in \Omega$. Then, we rewrite \eqref{eqn:DiracMeasureRel} as \[
        \phi(x) = \chi_E(x)f_{\Wtrunc}(x, A(x))+(1 - \chi_E(x))f_{\Wtrunc}(x, B(x))=(1 - \chi_E(x))f_{\Wtrunc}(x, B(x))
    \] for almost every $x\in \Omega$, which in particular implies that $E$ is Lebesgue measurable.
    Note that in the second equality we have used    $f_{\Wtrunc}(x,A(x))\equiv 0$ by definition of $f_{\Wtrunc}$.
    
    Since $\phi  \in BV(\Omega)$ and $\inf_{x\in \Omega}f(x, B(x)) > 0$ by \eqref{eq:delta-well},  by repeating the arguments in the proof of Lemma~\ref{lem:Oset} with $A$ and $B$ replaced by $f_{\Wtrunc}(\cdot,A(\cdot))$ and $f_{\Wtrunc}(\cdot,B(\cdot))$, respectively, we find that $E$ is a set of finite perimeter.
Note that for every $g\in L^2(\Omega; \R^{d \times d})$  we find that $g : \nabla u_n$ is equi-integrable by \eqref{eq:2-integrability}. Then, recalling  \eqref{eqn:etaYoung} with $\theta_x = \chi_E(x)$ a.e., by the fundamental theorem of Young measures we obtain
    $$\int_{\Omega} g : \nabla u \, dx = \lim_{n \to \infty} \int_{\Omega} g :\nabla u_n  \, dx = \int_{\Omega}\int_{\R^{d\times d}} g:M \, d \eta_x(M) \, dx = \int_{\Omega} g:(\chi_E A + (1-\chi_E)B)  \, dx.$$
    This proves that $\nabla u = \chi_E A + (1-\chi_E)B$. Thus,  $u\in X(\Omega)$.
    In particular, we infer that $\eta_x = \delta_{\nabla u(x)}$ for almost every $x\in \Omega$. Recalling that $\lbrace \eta_x\rbrace_{x\in \Omega}$ is the gradient Young measure generated by $\nabla u_n$,  again using \eqref{eq:2-integrability} we obtain
    \[
       \int_{\Omega}|\nabla u_n|^2 \, dx \to \int_{\Omega}|\nabla u|^2 \, dx
    \]
    Hence, by uniform convexity of $L^2$, we conclude that $u_n\to u$ strongly in $H^1(\Omega;\R^d)$.    
\end{proof}

\section{The liminf inequality}\label{sec:liminf-interm}
This section is devoted to the proof of the $\Gamma$-liminf inequality. \ We begin by proving an intermediate lower bound: we use  the blow-up method (cf.\ \cite{FonsecaMueller1992}) to show that the surface energy density is controlled from below by a nonlocal and multidimensional cell energy. Afterwards, we prove that this cell energy can be further bounded from below by the local and one-dimensional cell energy $\sigma_{\rm geo}$ defined in \eqref{eqn:geoCellEnergy}. Recall the notation $Q_\nu(x_0,r)$ introduced at the end of Section~\ref{intro}. 

\begin{definition}
\label{def:nonlocal-cell}
Let $W$ satisfy \ref{H1}--\ref{H3}. Let $x_0\in \Omega$ and $\nu\in \mathbb{S}^{d-1}$.
  For $\rho > 0$ and $u\in H^2(Q_\nu(x_0,\rho);\R^d)$,   we   define the rescaled function $$u^{x_0,\rho} (x): = \frac{1}{\rho} u(x_0+\rho x)  \quad \text{for $x \in Q_\nu(0,1)$}.$$
We define the \emph{nonlocal cell-energy density} as
\begin{align}\label{eqn:cellEnergyNonlocal}
    \sigma_{\rm nl} (x_0,\nu) : = &\inf\Big\{ \liminf_{\eps \to 0} \frac{1}{\rho_\eps^{d-1}}\int_{Q_{\nu}(x_0,\rho_\eps)} \left(\frac{1}{\eps}W(x,\nabla u_\eps(x)) + \eps |\nabla^2 u_\eps(x)|^2 \right) dx:  \  \rho_\eps\to 0, \  \eps/\rho_\eps \to 0  \notag\\
&\  \ \ \quad\quad \quad\nabla u^{x_0,\rho_\eps}_\eps \to A(x_0)\chi_{Q_{\nu}^+(0,1)}+B(x_0)\chi_{Q_{\nu}^-(0,1)}\text{ strongly in } {L^1\left(Q_{\nu}(0,1);\R^{d\times d}\right)}\Big\}.
\end{align} 
\end{definition}
 We point out that we call this a `nonlocal' cell energy  because in the integrand we have $W(x,\nabla u_\eps(x))$ and not $W(x_0,\nabla u_\eps(x))$. Freezing the first spatial input effectively `localizes' the cell energy. We will show that, under \ref{H1}--\ref{H3}, we can localize the geodesic problem that appears as a lower bound of the $\Gamma$-liminf, but it is an interesting and challenging question to find more general conditions under which localization is possible.  Note that this density is only relevant in the case that $\nu$ equals the normalized   row vector of $(A - B)(x_0)$ up to a sign, as otherwise  $\sigma_{\rm nl} (x_0,\nu) = + \infty$ due to the compactness and characterization results in Sections \ref{sec:limit-laminates}--\ref{sec:compactness}.  

We state now the first lower bound for the $\Gamma$-liminf.  Recall the energy $E_\eps$ \eqref{eqn:energy}, the space in $X(\Omega)$ \eqref{eq:X}, and \eqref{eqn:cellEnergyNonlocal}.

\begin{proposition}
\label{thm:liminf-inter}
    Let $W$ satisfy \ref{H1}--\ref{H3}. Let $\lbrace u_\eps \rbrace_\eps \subset  H^2(\Omega; \R^d)$ and $u \in X(\Omega)$ be such that $u_\eps \to u$ strongly in $H^1(\Omega; \R^d)$. Then,
    \begin{equation}
    \label{eq:liminf-in}
        \liminf_{\eps \to 0} E_{\eps}(u_\eps) \geq \int_{J_{\nabla u}}\sigma_{\rm nl}(x,\nu(x)) \, d\H^{d-1}(x),
    \end{equation}
    where for $\H^{d-1}$-a.e.\ $x\in J_{\nabla u}$ the unit vector $\nu(x)$ denotes the normal to $J_{\nabla u}$ in $x$  with $(\nabla u)^+(x) = A(x)$.
\end{proposition}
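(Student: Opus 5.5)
We prove \eqref{eq:liminf-in} with the blow-up method of \cite{FonsecaMueller1992}. Without loss of generality $\liminf_{\eps\to0}E_\eps(u_\eps)<\infty$; we pass to a subsequence realizing the liminf, so that $\sup_\eps E_\eps(u_\eps)<\infty$. We introduce the energy measures
\[
\mu_\eps := \Big(\tfrac1\eps W(x,\nabla u_\eps)+\eps|\nabla^2u_\eps|^2\Big)\mathcal{L}^d\mres\Omega .
\]
These have uniformly bounded mass, so up to a further subsequence $\mu_\eps\weaklystar\mu$ in the sense of Radon measures on $\Omega$, and $\mu(\Omega)\le\liminf_\eps E_\eps(u_\eps)$. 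By Lemma \ref{lem:Oset}, the measure $\lambda:=\H^{d-1}\mres J_{\nabla u}$ is finite. The Radon--Nikodym/Besicovitch differentiation theorem gives $\mu\ge \frac{d\mu}{d\lambda}\lambda$. It therefore suffices to show
\[
\frac{d\mu}{d\lambda}(x_0)\ge\sigma_{\rm nl}(x_0,\nu(x_0)) \quad \text{for $\H^{d-1}$-a.e.\ } x_0\in J_{\nabla u}.
\]

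We fix a point $x_0\in J_{\nabla u}$ with the following properties. First, the derivative $\frac{d\mu}{d\lambda}(x_0)$ exists, is finite, and can be computed on the cubes $Q_{\nu}(x_0,\rho)$ with $\nu=\nu(x_0)$. Second, $x_0$ is a point of approximate jump for $\nabla u$ with $(\nabla u)^\pm(x_0)=A(x_0),B(x_0)$, so that the rescaled gradients $\nabla u^{x_0,\rho}$ converge in $L^1(Q_\nu(0,1))$ to $A(x_0)\chi_{Q^+_\nu}+B(x_0)\chi_{Q^-_\nu}$. Third, by $C^1$-rectifiability of $J_{\nabla u}$, we have $\H^{d-1}(J_{\nabla u}\cap Q_\nu(x_0,\rho))/\rho^{d-1}\to1$. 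By Besicovitch, the first two properties hold $\H^{d-1}$-a.e.; the third is the standard density property of rectifiable sets. At such a point we compute
\[
\frac{d\mu}{d\lambda}(x_0)=\lim_{\rho\to0}\frac{\mu(Q_\nu(x_0,\rho))}{\rho^{d-1}},
\]
and we choose radii $\rho$ outside the countable set where $\mu(\partial Q_\nu(x_0,\rho))>0$. For such $\rho$ we get $\mu(Q_\nu(x_0,\rho))=\lim_\eps\mu_\eps(Q_\nu(x_0,\rho))$.

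The main step is a diagonal extraction. For fixed admissible $\rho$, we have
\[
\|\nabla u_\eps^{x_0,\rho}-\nabla u^{x_0,\rho}\|_{L^1(Q_\nu(0,1))}=\rho^{-d}\|\nabla u_\eps-\nabla u\|_{L^1(Q_\nu(x_0,\rho))}\to0
\]
as $\eps\to0$, because $u_\eps\to u$ in $H^1$. We take a sequence $\rho_k\to0$ and, for each $k$, pick $\eps_k$ small enough that the following three conditions hold simultaneously:
\[
\Big|\frac{\mu_{\eps_k}(Q_\nu(x_0,\rho_k))}{\rho_k^{d-1}}-\frac{\mu(Q_\nu(x_0,\rho_k))}{\rho_k^{d-1}}\Big|<\frac1k, \qquad \|\nabla u_{\eps_k}^{x_0,\rho_k}-\nabla u^{x_0,\rho_k}\|_{L^1}<\frac1k, \qquad \frac{\eps_k}{\rho_k}<\frac1k .
\]
The sequence $(\eps_k,\rho_k)$ then has $\rho_k\to0$ and $\eps_k/\rho_k\to0$. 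Moreover, $\nabla u_{\eps_k}^{x_0,\rho_k}\to A(x_0)\chi_{Q^+_\nu(0,1)}+B(x_0)\chi_{Q^-_\nu(0,1)}$ in $L^1$, since the rescaled limit gradients converge to the jump profile. So $u_{\eps_k}$ is an admissible competitor in \eqref{eqn:cellEnergyNonlocal}, where we interpret the infimum over sequences $\eps\to0$ as allowing a subsequence $\eps_k$, as implicit in the notation. Consequently
\[
\sigma_{\rm nl}(x_0,\nu)\le\liminf_k \frac{\mu_{\eps_k}(Q_\nu(x_0,\rho_k))}{\rho_k^{d-1}}=\frac{d\mu}{d\lambda}(x_0).
\]
Integrating over $J_{\nabla u}$ concludes the proof. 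The sign convention for $\nu$ matches the statement, since we orient so that $(\nabla u)^+=A$.

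The main obstacle is bookkeeping rather than depth. We need the density ratio along cubes with the oriented normal $\nu(x_0)$, which requires the rectifiability density property to hold along this specific family of cubes; this is standard. We also need the claim that $\nabla u^{x_0,\rho}$ converges in $L^1$, not merely that it has approximate limits. This convergence follows from the definition of approximate jump points together with the boundedness of $\nabla u$, which holds because $A,B$ are bounded, so the $L^1$ blow-up is uniformly integrable. Continuity of $A,B$ at $x_0$ lets us replace $A(x_0+\rho x)$ by $A(x_0)$ in the limit.
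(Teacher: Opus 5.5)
Your proposal is correct and follows essentially the same route as the paper's proof. Both use the energy measures $\mu_\eps$ and their weak-$*$ limit $\mu$, differentiate $\mu$ with respect to $\H^{d-1}\mres J_{\nabla u}$ on the oriented cubes $Q_{\nu(x_0)}(x_0,\rho)$ with $\mu(\partial Q_{\nu(x_0)}(x_0,\rho))=0$, combine the one-sided $L^1$ blow-up of $\nabla u$ with strong $H^1$ convergence, and choose $\rho_\eps$ diagonally with $\eps/\rho_\eps\to0$ to obtain an admissible competitor for $\sigma_{\rm nl}(x_0,\nu(x_0))$. (One small labeling slip: the approximate-jump property at $x_0$ holds automatically because $x_0\in J_{\nabla u}$, not by Besicovitch, but this does not affect the argument.)
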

 
\begin{proof}
Although the proof is standard, we include it here for the convenience of the reader.
If the left-hand side of \eqref{eq:liminf-in} is infinite, then there is nothing to prove. Therefore, 
we can assume that, up to the extraction of a (not relabeled) subsequence,
     $\lim_{\eps\to 0} E_{\eps}(u_\eps) = \liminf_{\eps\to 0} E_{\eps}( u_\eps) < \infty $. 
     Setting
    \[
        \mu_\eps(B) := E_{\eps}[ u_\eps, B]
    \]
    for every Borel set $B\subset \Omega$, by compactness we infer the existence of a bounded Radon measure $\mu\in \mathcal{M}_b(\Omega)$ such that $\mu_\eps \weaklystar \mu$. In particular,
    \[
        \mu(\Omega) \leq \lim_{\eps\to 0} \mu_\eps(\Omega).
    \]
   Using the blow-up method \cite{FonsecaMueller1992}, we show that 
    \begin{equation}\label{eqn:blowUp}
    \frac{d\mu}{d\H^{d-1}\mres_{J_{\nabla u}}}(x_0)\geq \sigma_{\rm nl}(x_0,\nu(x_0)) \quad \text{ for $\H^{d-1}$-a.e.\ $x_0\in J_{\nabla u}$},
    \end{equation} from which the thesis will follow.     By the Radon--Nikod\'ym  differentiation theorem  we have
    \begin{equation}\label{eqn:radonNikodym}
        \frac{d\mu}{d\H^{d-1}\mres_{J_{\nabla u}}}(x_0) = \lim_{\rho \to 0^+} \frac{\mu(Q_{\nu(x_0)}(x_0, \rho))}{\H^{d-1}(J_{\nabla u} \cap Q_{\nu(x_0)}(x_0, \rho))} = \lim_{\rho \to 0^+} \frac{\mu(Q_{\nu(x_0)}(x_0, \rho))}{\rho^{d-1}}
    \end{equation}
    at $\H^{d-1}$-almost every $x_0 \in J_{\nabla u}$, where $\nu(x_0)$ is the normal  with $(\nabla u)^+(x_0) = A(x_0)$.   Similarly,   at $\mathcal{H}^{d-1}$-almost every $x_0 \in J_{\nabla u}$, it holds that
    \begin{equation}\label{eqn:BVfineProp}
    \lim_{\rho\to 0}\frac{1}{\rho^d}\int_{Q^+_{\nu(x_0)}(x_0, \rho)} |\nabla u - A(x_0)|\, d x  = 0 \quad \text{ and }\quad  \lim_{\rho\to 0}\frac{1}{\rho^d}\int_{Q^-_{\nu(x_0)}(x_0, \rho)} |\nabla u - {B(x_0)}|\, d x  = 0,
    \end{equation}
    where $Q^\pm_{\nu(x_0)}(x_0, \rho) $ is defined  at the end of Section~\ref{intro}. Choosing a point $x_0$ satisfying both \eqref{eqn:radonNikodym} and \eqref{eqn:BVfineProp}, and selecting 
    a sequence of side lengths $\rho \to 0$ (still denoted by $\rho\to 0$) such that $\mu(\partial Q_{\nu(x_0)}(x_0, \rho)) = 0$ for each $\rho$, 
    %by \cite[Theorem 1.40]{EvansGariepy}  
    we obtain   
    \[
        \frac{d\mu}{d\H^{d-1}\mres_{J_{\nabla u}}}(x_0) =  \lim_{\rho\to 0}\lim_{\eps\to 0} \frac{\mu_\eps(Q_{\nu(x_0)}(x_0, \rho))}{\rho^{d-1}},
    \]
    as well as
    \begin{equation}\nonumber
    \lim_{\rho\to 0}\lim_{\eps\to 0}\frac{1}{\rho^d}\int_{Q^+_{\nu(x_0)}(x_0, \rho)} |\nabla u_\eps(x) - A(x_0)|\, d x  = 0 \quad \text{ and }\quad  \lim_{\rho\to 0}\lim_{\eps\to 0}\frac{1}{\rho^d}\int_{Q^-_{\nu(x_0)}(x_0, \rho)} |\nabla u_\eps(x) - {B(x_0)}|\, d x  = 0,
    \end{equation}
     where for the latter property we have used the assumption that $u_{\eps}\to u$ strongly in $H^1(\Omega;\R^d)$.
    
    Diagonalizing on $\eps$,  we  
    find a subsequence $\rho_\eps\to 0$ such that $\eps/\rho_\eps \to 0$  and 
    \begin{align}\label{5X} 
        \frac{d\mu}{d\H^{d-1}\mres_{J_{\nabla u}}}(x_0) =  \lim_{\eps\to 0} \frac{\mu_\eps( Q_{\nu(x_0)}(x_0, \rho_\eps))}{\rho_\eps^{d-1}},
    \end{align}
    as well as
    \begin{equation}\label{eqn:finePropDiag}
        \lim_{\eps\to 0}\frac{1}{\rho_\eps^d}\int_{Q^+_{\nu(x_0)}(x_0, \rho_\eps)} |\nabla u_\eps(x) - A(x_0)|\, d x  = 0 \quad \text{ and }\quad  \lim_{\eps\to 0}\frac{1}{\rho_\eps^d}\int_{Q^-_{\nu(x_0)}(x_0, \rho_\eps)} |\nabla u_\eps(x) - {B(x_0)}|\, d x  = 0.
    \end{equation}
    Recalling that, by definition,
    $$ 
        \frac{\mu_\eps( Q_{\nu(x_0)}(x_0, \rho_\eps))}{\rho_\eps^{d-1}} = \frac{1}{\rho_\eps^{d-1}}\int_{Q_{\nu(x_0)}(x_0,\rho_\eps)} \left(\frac{1}{\eps}W(x,\nabla u_\eps) + \eps |\nabla^2 u_\eps|^2 \right) dx,
    $$
    \eqref{eqn:cellEnergyNonlocal}, \eqref{5X}--\eqref{eqn:finePropDiag},  and a change of variables yield \eqref{eqn:blowUp}.
\end{proof} 
  
  Extending the notation from immediately before Lemma \ref{geodesic-distance-gradient-estimates-new}, we define the \emph{geodesic distance} between $M, N  \in \R^{d\times d}$ along a path $\psi\in W^{1,1}(  (-1,1);  \Omega)$ as
\begin{align}\label{eqn:geoGeneral}
    &d_W(\psi,M, N) : = \nonumber \\ 
    &\quad \inf \left\{\int_{-1}^{1} {2}\sqrt{W(\psi,\Phi)}|\Phi'|\, dt \colon \,  \Phi \in W^{1,1}((-1,1);\R^{d\times d}) \text{ with } \Phi(-1)=M \text{ and }\Phi(1) = N \right\}. 
\end{align}
Note that for the constant curve $\psi(t)\equiv x_0$ we have  
$$
    \sigma_{\rm geo}(x_0)  = d_W(x_0,A(x_0),B(x_0))  = d_W(\psi,A(x_0),B(x_0)). 
$$
We proceed with two auxiliary statements for the truncated potential 
\begin{align}\label{Wtrunc-neu}
    \WtruncLI :=\min \{W, C_0\}
\end{align}
with $C_0 > 0$.
\begin{proposition}\label{comparision-geodesic-distances}
    Let $W$ satisfy \ref{H1}--\ref{H3}. Let $C_0>0$ and define $\WtruncLI$ as in \eqref{Wtrunc-neu}. Let $\psi_1, \psi_2 \in W^{1,1}((-1,1); \R^d)$ and $M, N \in \R^{d \times d}$. Then, 
    \begin{align}\nonumber
        d_{\WtruncLI}(\psi_1, M, N) \leq  d_{\WtruncLI}( \psi_2, M, N) + C\|\psi_1 -  \psi_2\|_{L^\infty((-1,1))}(1 + \max\{|M|, |N|\})
    \end{align}
   for a constant $C = C(A, B, \|\psi'_2\|_{L^1(-1,1)} , C_0) >0$.
\end{proposition}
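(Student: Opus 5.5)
The plan is a perturbation argument. We take an almost optimal path for $\psi_2$, use it as a competitor for $\psi_1$, and estimate the change in the integrand with \ref{H3}. The only real difficulty is that this requires an a priori bound on the length $\int_{-1}^1|\Phi'|\,dt$ of an almost optimal path. Since $d_W(\psi,\cdot,\cdot)$ is not invariant under reparametrization when $\psi$ is nonconstant, this bound has to be obtained with some care.

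\emph{Step 1 (uniform Lipschitz bound for $\sqrt{\WtruncLI}$ in $x$).} We have $\sqrt{\WtruncLI}=\min\{\sqrt W,\sqrt{C_0}\}$, and taking the minimum with a constant preserves Lipschitz bounds. By \ref{H2} and the boundedness of $A,B$, there is $R_0=R_0(A,B,C_0)$ such that $W(x,M)\ge C_0$, and hence $\WtruncLI(x,M)=C_0$, for all $|M|\ge R_0$ and all $x$. Combining this with \ref{H3} for $R=R_0$ gives
\[
\big|\sqrt{\WtruncLI(x,M)}-\sqrt{\WtruncLI(y,M)}\big|\le C_{\rm Lip}(R_0)\,|x-y|\qquad\text{for all } M\in\R^{d\times d},\ x,y,
\]
with a constant independent of $M$. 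Consequently, for any admissible $\Phi$,
\[
\int_{-1}^1 2\sqrt{\WtruncLI(\psi_1,\Phi)}|\Phi'|\,dt\le \int_{-1}^1 2\sqrt{\WtruncLI(\psi_2,\Phi)}|\Phi'|\,dt+2C_{\rm Lip}(R_0)\|\psi_1-\psi_2\|_{L^\infty}\int_{-1}^1|\Phi'|\,dt .
\]
The statement therefore follows once we show that for every $\delta>0$ there is a $\delta$-optimal $\Phi$ for $d_{\WtruncLI}(\psi_2,M,N)$ with
\[
\int_{-1}^1|\Phi'|\,dt\le C\big(1+\max\{|M|,|N|\}\big),
\]
where $C=C(A,B,\|\psi_2'\|_{L^1},C_0)$. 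We then let $\delta\to0$.

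\emph{Step 2 (length bound, main obstacle).} The straight segment from $M$ to $N$ is an admissible competitor, so $d_{\WtruncLI}(\psi_2,M,N)\le 2\sqrt{C_0}\,|M-N|$, and any $\delta$-optimal $\Phi$ has cost at most $2\sqrt{C_0}|M-N|+\delta$. We fix a small $r>0$, depending on the well separation \eqref{eq:delta-well} and $C_0$, and split $(-1,1)$ into two kinds of times.

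\textit{Times away from the wells.} At times where $\Phi(t)$ is at distance at least $r$ from both $A(\psi_2(t))$ and $B(\psi_2(t))$, we have $\WtruncLI\ge c\min\{r^2,C_0\}$. The length of $\Phi$ accumulated there is therefore at most $C(r)$ times the cost, i.e.\ at most $C(1+|M|+|N|)$.

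\textit{Times near a well.} On the remaining times, where $\Phi$ lies in the $r$-tube around one well, we modify $\Phi$. For each maximal passage through the tube around a well, we identify its first entry time $t_1$ and last exit time $t_2$. On $[t_1,t_2]$ we replace $\Phi$ by the following path: a segment of length at most $r$ to $A(\psi_2(t_1))$, then the curve $t\mapsto A(\psi_2(t))$ up to $t_2$, then a segment of length at most $r$ back to $\Phi(t_2)$. The pieces of this replacement behave as follows.
\begin{itemize}
\item The middle piece costs nothing, since $\WtruncLI(\psi_2(t),A(\psi_2(t)))=0$. Its length is at most $\|\nabla A\|_\infty\|\psi_2'\|_{L^1}$; this is where the dependence on $\|\psi_2'\|_{L^1}$ enters.
\item The two connecting segments cost at most $Cr^2$, by \ref{H2}.
\item The segments must be inserted on short time intervals. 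This requires a reparametrization that slightly shifts the argument $\psi_2(t)$. The resulting error is controlled by the continuity of $\psi_2$ together with Step 1, and can be made smaller than any prescribed $\delta'$.
\end{itemize}
The number of such passages is bounded. Indeed, moving from the tube around $A$ to the tube around $B$ costs at least a fixed $c_0>0$, by well separation, so the number of alternations is at most $C(1+|M-N|)$. Hence the total additional cost of the replacements is bounded by $C(1+|M-N|)r^2+\delta'$, which is harmless after choosing $r$ small and absorbing it into $\delta$. Moreover, the length of the modified path in the tubes is at most $C(1+|M-N|)(1+\|\psi_2'\|_{L^1})$.

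I expect this replacement procedure to be the technically delicate part: choosing entry and exit times, and keeping the time reparametrization consistent with the $t$-dependence of $\psi_2$. Once it is in place, combining the length bound with Step 1 yields the claim, with $C=C(A,B,\|\psi_2'\|_{L^1},C_0)$.
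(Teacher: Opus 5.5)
\paragraph{Gap: the length bound is not uniform in $\delta$.} Your Step 1 is the paper's argument. Like the paper, you reduce the claim to a length bound for almost optimal paths for $d_{V_0}(\psi_2,M,N)$ that is \emph{uniform in the tolerance} $\delta$; the paper takes this bound from Theorem~\ref{existence-of-geodesics-with-bounded-path-length}. Your Step 2 does not deliver that uniformity.

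Away from the $r$-tubes you only use $\sqrt{V_0}\gtrsim r$. So the length there is bounded by roughly $|M-N|/r$, and your $C(r)$ is of order $r^{-1}$. On the other hand, each tube replacement can raise the cost by order $r^2$. For instance, if the original path merely grazes the tube at near-zero cost, your detour to the well and back costs at least about $r^2/\sqrt{C_*}$. For the modified path to stay $\delta$-optimal you therefore need $r\lesssim(\delta/(1+|M-N|))^{1/2}$. That is what ``choosing $r$ small and absorbing it into $\delta$'' amounts to, and it contradicts your earlier choice of $r$ in terms of the well separation and $C_0$ only.

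With that choice your length bound is of order $\delta^{-1/2}$, so letting $\delta\to0$ gives nothing. If you keep $r$ fixed and optimise instead, you get an error $C(1+|M|+|N|)\big(r^2+r^{-1}\|\psi_1-\psi_2\|_\infty\big)$. That is only a H\"older estimate with exponent $2/3$, not the Lipschitz bound.

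\paragraph{The missing idea: multiscale control of the length near the wells.} This is Step 2 of the proof of Theorem~\ref{existence-of-geodesics-with-bounded-path-length}. The paper starts from a $2^{-2m}$-optimal path and replaces it only where it comes within distance of order $2^{-m}$ of a well, so the tolerance is the square of the cut-off, exactly the balance you need. On each annulus where $|\Phi-A(\psi)|\sim 2^{-j}$, $j\le m$, it uses local near-optimality (Lemma~\ref{geodesics-are-local-geodesics}) against the competitor ``linear interpolation of the offsets plus $A\circ\psi$'' on the corresponding time interval $I_j$. This competitor costs $\lesssim 2^{-j}\big(2^{-j}+\int_{I_j}|\psi'|\big)$, while $\sqrt{W}\gtrsim 2^{-j}$ along $\Phi$. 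Hence the length spent in that annulus is $\lesssim 2^{-j}+\int_{I_j}|\psi'|+2^{j-2m}$. Since the $I_j$ are disjoint and $2^{j-2m}\le 2^{-j}$ for $j\le m$, these bounds sum to a constant independent of $m$.

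\paragraph{A smaller issue: counting well switches.} Your claim that every switch from the $A$-tube to the $B$-tube costs at least a fixed $c_0$ is false for general $\psi_2$. If $A(\psi_2(s))=B(\psi_2(s'))$, a constant path switches at zero cost. The paper avoids this by restricting to ${\rm Curv}$ (small oscillation, Lemma~\ref{lemma:admissibility_lemma}). In your setting you can repair it by charging each switch either to the cost or to $\int|(A\circ\psi_2)'|$ over the switching interval. This bounds the number of switches by $C(1+|M-N|+\|\psi_2'\|_{L^1})$.
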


\begin{lemma}\label{lemma:trunction_does_not_matter}
    Let $W$ satisfy \ref{H1}--\ref{H3}. Let $C_0>0$  and define $\WtruncLI$ as in \eqref{Wtrunc-neu}. Then, for $C_0$ large enough, it holds that 
    \[
        d_{W}(x, A(x), B(x)) = d_{\WtruncLI}(x, A(x), B(x))
    \]
    for all $x \in \Omega$.
\end{lemma}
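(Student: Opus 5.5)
The inequality $d_{\WtruncLI}\le d_W$ is immediate, since $\WtruncLI\le W$ pointwise and so every admissible path has smaller energy for $\WtruncLI$. The content is the reverse inequality. The plan is to show that, for $C_0$ large enough (uniformly in $x$), near-optimal paths for $d_{\WtruncLI}(x,A(x),B(x))$ can be taken inside a fixed ball $B_R(0)\subset\R^{d\times d}$ on which $W\le C_0$, so that $W=\WtruncLI$ along them.

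\emph{Step 1 (uniform upper bound).} Take the straight segment $\Phi(t)=\frac{1-t}{2}B(x)+\frac{1+t}{2}A(x)$. By \ref{H2} and the boundedness of $A,B$ on $\overline\Omega$,
\[
d_{\WtruncLI}(x,A(x),B(x))\le d_W(x,A(x),B(x))\le 2\sqrt{C_*}\,|A(x)-B(x)|^2\le K
\]
for some $K$ independent of $x$ and of $C_0$.

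\emph{Step 2 (cost of leaving a large ball).} Let $R_0:=\|A\|_\infty+\|B\|_\infty+1$ and set $R:=R_0+\ell$, with $\ell$ chosen below. Suppose a path $\Phi$ from $B(x)$ to $A(x)$ leaves $B_R(0)$. Then it must cross the annulus $\{R_0\le|M|\le R\}$, along which it has length at least $\ell$. On this annulus \ref{H2} gives $W(x,M)\ge C_*^{-1}\min\{|M-A|^2,|M-B|^2\}\ge C_*^{-1}$, since $|M-A(x)|\ge 1$ and $|M-B(x)|\ge 1$ there. Hence, if $C_0\ge C_*^{-1}$, also $\WtruncLI\ge C_*^{-1}$ on the annulus. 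The $\WtruncLI$-cost of $\Phi$ is then at least $2\ell/\sqrt{C_*}$. We choose $\ell$ so that $2\ell/\sqrt{C_*}>K+1$.

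\emph{Step 3 (choice of $C_0$ and conclusion).} Choose $C_0\ge\max\{C_*^{-1},\,C_*R'^2\}$, where $R'=R+R_0$. This guarantees $W(x,M)\le C_*|M-A(x)|^2\le C_0$ for $|M|\le R$, so $W=\WtruncLI$ on $B_R(0)$ for every $x$. Any $\Phi$ with $\WtruncLI$-energy at most $d_{\WtruncLI}+\delta<K+1$ stays in $B_R(0)$ by Step 2, so its $W$-energy equals its $\WtruncLI$-energy. Therefore $d_W\le d_{\WtruncLI}+\delta$, and letting $\delta\to0$ gives the equality.

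The constants $R$ and $C_0$ are independent of $x$ because $A$ and $B$ are bounded on $\overline\Omega$ and $C_*$ is uniform. The only delicate point is Step 2: the lower bound on $\WtruncLI$ in the annulus has to be uniform. This works because $\min\{C_0,\cdot\}$ does not destroy a positive lower bound once $C_0\ge C_*^{-1}$, and because the length element $|\Phi'|\,dt$ makes the estimate independent of how the path is parametrized.
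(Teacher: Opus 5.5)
Your proof is correct, but it reaches the conclusion by a different and more elementary route than the paper.

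Both proofs share the same skeleton. Almost-minimizing paths for $d_{V_0}(x,A(x),B(x))$ are confined to a ball $B_R(0)$ whose radius does not depend on $C_0$. Then one takes $C_0\ge\sup_{\overline\Omega\times B_R(0)}W$, so that $W=V_0$ along those paths.

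The paper gets the confinement from Theorem~\ref{existence-of-geodesics-with-bounded-path-length} with $\alpha_*=C_0/2$ and $\beta_*=0$. Near-optimal paths then have Euclidean length bounded by a constant depending only on $A$ and $B$. The factor $\min\{1+|M|+|N|,\alpha_*\}$ in \eqref{eqn:geoLengthBound} removes the dependence on $C_0$.

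You get the confinement from a direct energy comparison instead:
\begin{itemize}
\item The straight segment gives $d_{V_0}\le d_W\le K$, with $K$ independent of $x$ and $C_0$.
\item Once $C_0\ge C_*^{-1}$, you have $V_0\ge C_*^{-1}$ outside $B_{R_0}(0)$.
\item So any excursion out of $B_{R_0+\ell}(0)$ costs at least $2\ell/\sqrt{C_*}>K+1$.
\end{itemize}

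Your route is self-contained. It uses only \ref{H2} and the boundedness of the wells, without \ref{H3} or the annulus-by-annulus path surgery behind the length bound, and it gives an explicit threshold for $C_0$. What you control, though, is only where near-optimal paths go, not how long they are.

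The paper's route makes the lemma a short corollary of a theorem it needs anyway. A genuine length bound is indispensable for Proposition~\ref{comparision-geodesic-distances} and Lemma~\ref{geodesic-distance-gradient-estimates}. There, the error from moving the spatial base point is multiplied by $\int|\Phi'|\,dt$, and confinement alone would not suffice.
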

We postpone the proof of these two  results to Section \ref{sec:geodesics-disc}. The purpose of the following result is to show that the nonlocal cell-energy density in \eqref{eqn:cellEnergyNonlocal} can be bounded from below by a localized cell energy. 
% Recall the notion of the \emph{geodesic cell-energy density} $\sigma_{\rm geo}$ in Definition \ref{def:geo-cell-en}.
\begin{proposition}\label{thm:cellEnergyGeoBound}
Let  $\sigma_{\rm nl}$ be defined as in \eqref{eqn:cellEnergyNonlocal} and $\sigma_{\rm geo}$ as in \eqref{eqn:geoCellEnergy}. Let $W$ satisfy \ref{H1}--\ref{H3}.  Then, for every $x_0\in \Omega$,  it holds that 
\[
    \sigma_{\rm nl}   (x_0, \pm \nu(x_0))\geq \sigma_{\rm geo}(x_0)
\]
where $\nu(x_0)$ is the normalized  row vector of $(A - B)(x_0)$. 
\end{proposition}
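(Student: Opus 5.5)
Fix $x_0\in\Omega$, write $\nu=\nu(x_0)$, and take an admissible sequence $\{u_\eps\}$ in \eqref{eqn:cellEnergyNonlocal} with $\rho_\eps\to0$, $\eps/\rho_\eps\to0$, and finite $\liminf$; we may assume the $\liminf$ is a limit. The plan is a slicing argument in the direction $\nu$. We show that on most segments $x'+t\nu$ crossing the cube, the one-dimensional energy dominates a geodesic distance along the spatial path $\psi(t)=x'+t\nu$. We then replace that path by the constant curve $x_0$ using Proposition \ref{comparision-geodesic-distances}, and remove the truncation with Lemma \ref{lemma:trunction_does_not_matter}.

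\emph{Step 1 (rescaling and one-dimensional inequality).} Set $v_\eps:=u_\eps^{x_0,\rho_\eps}$ and $\delta:=\eps/\rho_\eps\to0$. After the change of variables the rescaled energy reads
\[
\int_{Q_\nu(0,1)}\Big(\frac1\delta W(x_0+\rho_\eps y,\nabla v_\eps)+\delta|\nabla^2 v_\eps|^2\Big)dy .
\]
Write $y=y'+t\nu$ and use $\nabla^2 v_\eps\ge|\partial_t\nabla v_\eps|$, $W\ge\WtruncLI$, and Young's inequality $\frac ab+b c^2\ge2\sqrt a\,c$. This bounds the integrand below by $2\sqrt{\WtruncLI(\psi_{y'}(t),\Phi_{y'}(t))}\,|\Phi_{y'}'(t)|$, where $\Phi_{y'}(t):=\nabla v_\eps(y'+t\nu)$ and $\psi_{y'}(t):=x_0+\rho_\eps(y'+t\nu)$. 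The slices are $H^1$, hence $W^{1,1}$, for a.e. $y'$. The time interval is $(-1/2,1/2)$ instead of $(-1,1)$, but the geodesic functional is invariant under affine reparametrization, so this changes nothing.

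\emph{Step 2 (endpoints).} From the $L^1$ convergence $\nabla v_\eps\to A(x_0)\chi_{Q^+}+B(x_0)\chi_{Q^-}$ and Fubini, for a.e. $y'$ and along a subsequence we find $s^+_\eps\in(1/4,1/2)$ and $s^-_\eps\in(-1/2,-1/4)$ with $\Phi_{y'}(s^\pm_\eps)\to A(x_0)$ and $B(x_0)$ respectively. I would do this by Fubini followed by a diagonal extraction, applying Fatou in $y'$ at the end. Restricting the slice integral to $(s^-_\eps,s^+_\eps)$ gives
\[
\int\ge d_{\WtruncLI}\big(\psi_{y'},\Phi(s^-_\eps),\Phi(s^+_\eps)\big).
\]

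\emph{Step 3 (localization; main obstacle).} We have $\|\psi_{y'}-x_0\|_{L^\infty}\le C\rho_\eps\to0$, and the constant path $x_0$ has $\|\psi_2'\|_{L^1}=0$. Proposition \ref{comparision-geodesic-distances} therefore gives
\[
d_{\WtruncLI}(\psi_{y'},M_\eps,N_\eps)\ge d_{\WtruncLI}(x_0,M_\eps,N_\eps)-C\rho_\eps\big(1+|M_\eps|+|N_\eps|\big),
\]
with $M_\eps,N_\eps$ bounded by Step 2. Here the order of the two paths matters: the constant in the proposition depends on $\|\psi_2'\|_{L^1}$, which is why the constant path must play the role of $\psi_2$ when the estimate is rewritten in the direction we need. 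This is the step where the uniform-length geodesic bounds of Section \ref{sec:geodesics-disc} enter.

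Next, continuity of $d_{\WtruncLI}(x_0,\cdot,\cdot)$ in its endpoints follows from the triangle inequality together with $\WtruncLI\le C_0$, which gives a local Lipschitz bound. With it we pass to $d_{\WtruncLI}(x_0,B(x_0),A(x_0))$. Lemma \ref{lemma:trunction_does_not_matter} identifies this quantity with $\sigma_{\rm geo}(x_0)$ for $C_0$ large; symmetry of $d$ under path reversal handles the order of the endpoints. Integrating over $y'$ in the unit face, which has measure $1$, and applying Fatou yields $\sigma_{\rm nl}(x_0,\nu)\ge\sigma_{\rm geo}(x_0)$.

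The case $-\nu$ is identical: it only swaps which half-cube carries $A(x_0)$ and which carries $B(x_0)$, and symmetry of the geodesic distance absorbs this.
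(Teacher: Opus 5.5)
Your proposal is correct and follows the paper's proof step by step: Young's inequality, slicing along $\nu$, truncation to $\WtruncLI$, freezing the spatial path with Proposition~\ref{comparision-geodesic-distances}, continuity in the endpoints, and Lemma~\ref{lemma:trunction_does_not_matter}; your slice-dependent times $s^\pm_\eps$ with Fatou in $y'$, in place of the paper's single level $\tau$ with $L^1$ convergence of the traces, is only a cosmetic variation. One correction to your Step~3 remark: your role assignment is backwards, since the lower bound comes from taking the constant path as $\psi_1$ and $\psi_{y'}$ as $\psi_2$, so the constant depends on $\|\psi_{y'}'\|_{L^1}\le C\rho_\eps$ (and needs $\psi_{y'}\in{\rm Curv}$), which is harmless here because it stays uniformly bounded.
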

\begin{proof}
Fix $x_0\in \Omega$. Let $\eps\to 0$ and $\rho_\eps\to 0 $ with $\eps/\rho_\eps \to 0$, and consider a sequence  $\lbrace u_\eps\rbrace_\eps$  with $u_\eps \in H^2(Q_{\nu(x_0)}(x_0,\rho_\eps); \R^d)$  such that 
\begin{align}\label{eq:L1_convergence}
    \nabla u^{x_0,\rho_\eps}_\eps \to A(x_0)\chi_{Q_{\nu(x_0)}^+(0,1)}+B(x_0)\chi_{Q_{\nu(x_0)}^-(0,1)}
\end{align}
strongly in ${L^1(Q_{\nu(x_0)}(0,1);\R^{d\times d})}$, where we have adopted the notation of Definition \ref{def:nonlocal-cell}.  We prove that
\begin{equation}\label{eqn:liminfGeo}
\liminf_{\eps \to 0} \frac{1}{\rho_\eps^{d-1}}\int_{Q_{\nu(x_0)}(x_0,\rho_\eps)} \left(\frac{1}{\eps}W(x,\nabla u_\eps) + \eps |\nabla^2 u_\eps|^2 \right) dx \geq \sigma_{\rm geo}(x_0) ,
\end{equation} from which the statement follows.  Without loss of generality, we  assume that 
$$
    x_0 = 0 \qquad \text{ and } \qquad \nu(x_0) = e_d.
$$
Moreover, we adopt the notation $Q_{e_d}(0,1)=Q(0,1)=Q'(0,1)\times \left(-\tfrac12,\tfrac12\right)$ and write $u_\eps^{\rho_\eps} = u_\eps^{x_0,\rho_\eps}$. 
  In particular, by applying a  standard  slicing argument to \eqref{eq:L1_convergence}, we find  some $\tau \in (1/4,1/2)$ such that   
\begin{equation}\label{eqn:traceConvergence}
\int_{Q'(0,1)\times \{\tau\}} |\nabla u_\eps^{\rho_\eps} - A(x_0)|\, d\mathcal{H}^{d-1} + \int_{Q'(0,1)\times \{-\tau\}} |\nabla u_\eps^{\rho_\eps} - B(x_0)|\, d\mathcal{H}^{d-1} \to 0.
\end{equation}
For every $y'\in Q'(0,1)$ and $s \in (-1/2,1/2)$ we define 
\begin{equation}\label{eqn:traceConvergence2}
    \Phi_\eps^{y', \rho_\eps}(s) := \nabla u_\eps (\rho_\eps y',\rho_\eps s),
\end{equation}
and remark that by  slicing and trace properties of Sobolev functions and functions of bounded variations (cf.\ \cite[Section 3.11]{ambrosio2000fbv})   we get $\Phi_\eps^{y', \rho_\eps} \in W^{1,1}((-1/2,1/2);  \R^{d \times d})$ with 
\begin{equation}\label{eqn:traceBlowRel}
    \Phi_\eps^{y', \rho_\eps}(\pm \tau) = \nabla u_\eps^{\rho_\eps}(y', \pm \tau)
\end{equation}
for $\mathcal{H}^{d-1}$-a.e.\   $y' \in Q'(0,1)$. 
We also  note that
$$ \frac{d}{ds}\Phi_\eps^{y', \rho_\eps}(s) = \rho_\eps \nabla (\partial_n u_\eps) (\rho_\eps y', \rho_\eps s).  
$$
Now, let $C_0>0$  and define $\WtruncLI$ as in \eqref{Wtrunc-neu}.  If the truncation constant $C_0 > 0$ is large enough, the statement of Lemma \ref{lemma:trunction_does_not_matter} holds. Thus, by Young's inequality, Fubini's theorem, and a change of variables  we estimate
\begin{align}
    \nonumber&\frac{1}{\rho_\eps^{d-1}}\int_{Q(0,\rho_\eps)} \left(\frac{1}{\eps}W(y,\nabla u_\eps) + \eps |\nabla^2 u_\eps|^2 \right) \, dy \\ 
     & \nonumber \hspace{4em} \geq \frac{1}{\rho_\eps^{d}}\int_{Q(0,\rho_\eps)} 2\sqrt{W(y,\nabla u_\eps)}|\rho_\eps\nabla^2 u_\eps| \, dy \\
    \nonumber& \hspace{4em} \geq \int_{Q(0,1)} 2\sqrt{W(\rho_\eps y,\Phi_\eps^{y', \rho_\eps}(y_d) )} \left| \tfrac{d}{ds}\Phi_\eps^{y', \rho_\eps}  (y_d)\right|  \, d(y', y_d) \\ 
     &\label{eq:est1_LI} \hspace{4em} \geq  \int_{Q'(0,1)}\int_{-\tau}^{\tau} 2\sqrt{\WtruncLI(\rho_\eps y,\Phi_\eps^{y', \rho_\eps}(y_d))}\left| \tfrac{d}{ds}\Phi_\eps^{y', \rho_\eps}  (y_d)\right| \, dy_d \, dy'.
\end{align}
Now, defining 
$$
    {  \psi^{y', \rho_\eps} (s):= \rho_\eps (y', \tau s ) \quad  \text{ for $s\in (-1,1)$, }  }
$$
using the definition of $d_{\WtruncLI}$ in \eqref{eqn:geoGeneral} for every point $y' \in Q'(0,1)$ (with $\Phi_\eps^{y', \rho_\eps}$ in place of $\Phi$), and applying the change of variable $y_d = \tau \tilde y_d$ we infer
\begin{align}
    \int_{Q'(0,1)}\int_{-\tau}^{\tau} 2\sqrt{\WtruncLI(\rho_\eps y,\Phi_\eps^{y', \rho_\eps}(y_d))}| \tfrac{d}{ds}\Phi_\eps^{y', \rho_\eps}  (y_d)| \, dy_d \, dy'  \label{eq:est2_LI} \geq \int_{Q'(0,1)} d_{\WtruncLI} (  \psi^{y', \rho_\eps} , \Phi_\eps^{y', \rho_\eps} (\tau) , \Phi_\eps^{y', \rho_\eps} (-\tau) )\, d y'.
\end{align}
To obtain the  statement, we  need  to pass to the limit on the right-hand side of \eqref{eq:est2_LI} as $\eps\to 0$. 
Precisely,  using the definition of $\psi^{y', \rho_\eps}$, we first apply Proposition \ref{comparision-geodesic-distances} to infer 
\begin{align}\label{eqn:liminf_inequ_1}
    & d_{\WtruncLI} ( \psi^{y', \rho_\eps}, \Phi_\eps^{y', \rho_\eps}(\tau) , \Phi_\eps^{y', \rho_\eps}(- \tau)) \nonumber \\
    & \hspace{2cm} \geq  d_{\WtruncLI} (x_0 , \Phi_\eps^{y', \rho_\eps}(\tau) , \Phi_\eps^{y', \rho_\eps}(-\tau))  - C\rho_\eps\big(1 + \max\{|\Phi_\eps^{y', \rho_\eps}(- \tau))|, |\Phi_\eps^{y', \rho_\eps}(\tau)|\}\big).  
\end{align}
Since 
\begin{align}\label{startar}
    \int_{Q'(0,1)}|\Phi_\eps^{y', \rho_\eps}(\pm \tau))| \, dy' \leq C\big(1 + |A(x_0)| + |B(x_0)| \big)
\end{align}
by \eqref{eqn:traceConvergence}--\eqref{eqn:traceConvergence2},  the integral over the last term in \eqref{eqn:liminf_inequ_1} vanishes as $\eps \to 0$. Now, as
\[
    (M, N) \mapsto d_{\WtruncLI}(x_0, M, N)
\]
is Lipschitz, see  Lemma \ref{geodesic-distance-gradient-estimates-new},   and as by \eqref{eqn:traceConvergence}  and \eqref{eqn:traceBlowRel} we have
\[
    (\Phi_\eps^{y', \rho_\eps}(\tau) , \Phi_\eps^{y', \rho_\eps}(- \tau)) \to (A(x_0), B(x_0))
\]
in $L^1(Q'(0,1); (\R^{d\times d})^2)$,  we obtain by dominated convergence
\[
   \lim_{\eps \to 0}  \int_{Q'(0,1)} d_{\WtruncLI} (x_0, \Phi_\eps^{y', \rho_\eps}(\tau) , \Phi_\eps^{y', \rho_\eps}(- \tau)) \, dy' = \int_{Q'(0,1)} d_{\WtruncLI} (x_0, A(x_0) , B(x_0)) \, dy'.
\]
Since Lemma \ref{lemma:trunction_does_not_matter} holds by our choice of $C_0$, we therefore derive
\[
    \lim_{\eps \to 0}  \int_{Q'(0,1)} d_{\WtruncLI} ( x_0 , \Phi_\eps^{y', \rho_\eps}(\tau) , \Phi_\eps^{y', \rho_\eps}(- \tau)) \, dy' = \sigma_{\rm geo}(x_0).
\]
In conjunction with \eqref{eq:est1_LI}, \eqref{eq:est2_LI}, \eqref{eqn:liminf_inequ_1}, and \eqref{startar}, we infer \eqref{eqn:liminfGeo}.
\end{proof}
 We close this section by noting that the combination of  Propositions \ref{thm:liminf-inter} and \ref{thm:cellEnergyGeoBound} yields the $\Gamma$-liminf inequality in Theorem \ref{thm:main}. 

     \section{Construction of   recovery sequences}
    \label{recoverysequnecesection}
     \label{sec:limsup}
 
    In this section, we prove that,   under the assumption \ref{H4}, the lower bound identified in Section \ref{sec:liminf-interm} is optimal. Throughout this section, we assume that $W$ satisfies \ref{H1}--\ref{H4} if not specified otherwise. We remind the reader of the discussion in Remarks \ref{remark:reduction_of_wells}--\ref{remark:diffeomorphism_construction} corresponding to \ref{H4}.   In particular, we recall that we simplify to the case  
    \[
        A = -B = (\kappa \circ \gamma) \otimes \nabla \gamma
    \]
    with $\kappa  \in C^1(\R; \R^d \setminus \lbrace 0 \rbrace )$ and $\gamma \in  C^3  ( \R^d)$ (cf.\  Remark \ref{remark:reduction_of_wells}). We set 
    \begin{align}\label{nununu}
        \nu  :=  \frac{\nabla \gamma}{|\nabla \gamma|}.
    \end{align}
    We call the $C^2$-diffeomorphism $F \in C^2(\R^d; \R^d)$ provided by Remark \ref{remark:diffeomorphism_construction} the \emph{diffeomorphism  associated   to $\gamma$}. For a discussion on this diffeomorphism, we refer the reader to Remark \ref{remark:reduction_of_wells}(b) and Remark~\ref{remark:diffeomorphism_construction}(a)--(b).
    
    The main idea for constructing a recovery sequence is to use the diffeomorphism induced by the rank-one connection to flatten the interfaces, construct recovery sequences locally on the flattened domain, and then glue them together. For this procedure, in Subsection \ref{sec71}, we first introduce notation for the pushforward of the  involved quantities. Then, we introduce some auxiliary lemmas which essentially state that the construction of recovery sequences on the diffeomorphed domain with respect to these diffeomorphed quantities is equivalent to the construction of a recovery sequence on the non-diffeomorphed domain. Then, we discuss geodesics and the interaction of \ref{H4}(ii) with the diffeomorphism. Afterwards, in Subsection \ref{sec72}, we come to the construction of recovery sequences for flat interfaces, first on cylinders contained in the diffeomorphed domain and then for general domains. 

    In this section, we will use the variable $x$ for elements of the diffeomorphized domain $F(\Omega)$ and the variable $y$ for elements of $\Omega$, usually subject to the relation $F^{-1}(x) = y$. Moreover,  for point evaluation we use notation of the type
    \[
        (\nabla F)_y := \nabla F(y),  \ \ \  A_x^F:= A^F(x), \ \ \ \text{etc.} 
    \]
    when it improves readability.

    \subsection{Flattening of interfaces by diffeomorphism}\label{sec71}
    
    We start with some definitions.
    \begin{definition}\label{def:diffeomorphed_quantities}
        Let $F \colon \R^d \to \R^d$ be the $C^2$-diffeomorphism associated to $\gamma$. For any $x \in F(\Omega)$ and $y \in \Omega$ with $y = F^{-1}(x)$ we define the pushforward of $W$ via $F$ by
        \begin{align*}
            W_F(x ,M) := W\big(y, M(\nabla F)_y \big) \big|\det (\nabla F^{-1})_x\big|
        \end{align*}
        for $M \in \R^{d \times d}$. Furthermore, we set 
        \begin{align*}
            |\mathfrak T|_{F,x} := |(\nabla F)^T_y \mathfrak T (\nabla F)_y| \left|\det (\nabla F^{-1})_x \right|^{1/2}
        \end{align*}
        for any $\mathfrak T \in \R^{d \times d \times d}$,  where  $\mathfrak{T}_i = (\mathfrak T_{ijk})_{j,k = 1}^d$  denotes  the $d\times d$ matrix with  fixed  first index    and 
        \[
            |(\nabla F)^T_y \mathfrak T (\nabla F)_y|^2 :=  \sum_{i = 1}^d |(\nabla F)^T_y \mathfrak{T}_i (\nabla F)_y|^2.
        \] 
        We write $|\mathfrak T|_{F}$ if it is clear at which point this norm is evaluated. Moreover,  we denote the geodesic distance adapted to the diffeomorphism by
        \begin{align*}
            d_{W}^F(x, M, N) := |(\nabla F)_y^T e_d|\left|\det \nabla (F^{-1})_x \right|^{1/2}\inf \bigg\{ & \int_{-1}^1 \sqrt{W_F(y, \Phi)}|(\Phi')(\nabla F)_y| \, ds : \\
            &\Phi \in W^{1,1}((-1,1);\, \R^{d \times d}), \, \Phi(-1) = M, \, \Phi(1) = N \bigg\}
        \end{align*}
        for matrices $M, N \in \R^{d \times d}$. Lastly, we define the pushforward of the wells via $F$ by 
        \[
            \alpha^F := \alpha \circ F^{-1} \qquad \text{ and }  \qquad \beta^F := \beta \circ F^{-1}
        \]
        and
        \[
            A^F := (A \circ F^{-1})\nabla F^{-1} \qquad \text{ and } \qquad B^F := (B \circ F^{-1})\nabla F^{-1}.
        \]
    
    \end{definition}
    First, we prove that the properties \ref{H1} and \ref{H2} are preserved under the diffeomorphism $F$ associated to $\gamma$. 
    \begin{lemma}\label{lemma:diffeomorphed_quantities_are_comparable} 
        Let $W$ satisfy \ref{H1} and \ref{H2}. Let $F\colon \R^d \to \R^d$ be the $C^2$-diffeomorphism associated to $\gamma$. Then $W_F$ also satisfies \ref{H1} and \ref{H2} with wells $\alpha^F$ and $\beta^F$, and there exists a constant $C_F > 0$ such that
        \[
            \frac 1 {C_F} |\mathfrak T| \leq |\mathfrak T|_F \leq C_F |\mathfrak T| \quad  \text{for all  $\mathfrak{T} \in \R^{d \times d \times d}$}. 
        \]
    \end{lemma}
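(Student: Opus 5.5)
The plan is to check the three claims directly from the definitions, using only that $F$ is a $C^2$-diffeomorphism of $\R^d$ and that all relevant quantities are evaluated on the bounded set $\overline{\Omega}$ (respectively $F(\overline{\Omega})$). By compactness of $\overline\Omega$ and continuity of $\nabla F$ and $\nabla F^{-1}$, there is a constant $\Lambda\ge 1$ such that
\[
|\nabla F|,\ |\nabla F^{-1}|,\ |\det \nabla F|,\ |\det\nabla F^{-1}| \le \Lambda
\]
on $\overline\Omega$, respectively on $F(\overline\Omega)$. All constants, including $C_F$, will depend only on $\Lambda$ and on $C_*$ from \ref{H2}.

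\emph{Structure of wells \ref{H1}.} By the chain rule, $\nabla \alpha^F = (\nabla\alpha\circ F^{-1})\nabla F^{-1} = A^F$, and likewise $\nabla\beta^F = B^F$. Since $F^{-1}\in C^2$ and $\alpha\in C^2$, the composition $\alpha^F$ is $C^2$ on $F(\overline\Omega)$. The rank-one property is preserved because
\[
A^F-B^F = \big((A-B)\circ F^{-1}\big)\nabla F^{-1}
\]
is the product of a rank-one matrix with an invertible matrix. Well-separation follows from
\[
|A-B| = |(A^F-B^F)\circ F\,\nabla F| \le \Lambda\, |A^F-B^F|\circ F,
\]
which gives $\min |A^F-B^F| \ge \Lambda^{-1}\min|A-B|>0$.

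\emph{Zero set and quadratic growth \ref{H2}.} Fix $x\in F(\Omega)$ and set $y=F^{-1}(x)$. Using $(\nabla F)_y(\nabla F^{-1})_x = I$, I would write
\[
M(\nabla F)_y - A(y) = \big(M - A^F(x)\big)(\nabla F)_y,
\]
and the analogous identity for $B$. Then
\[
|M - A^F(x)|\,\Lambda^{-1} \le |(M-A^F(x))(\nabla F)_y| \le \Lambda\, |M-A^F(x)|,
\]
where the lower bound comes from writing $M-A^F(x) = \big((M-A^F(x))(\nabla F)_y\big)(\nabla F^{-1})_x$. Inserting these bounds into \ref{H2} for $W(y,\cdot)$ and multiplying by $|\det(\nabla F^{-1})_x| \in [\Lambda^{-1},\Lambda]$ yields \ref{H2} for $W_F$ with constant $C_*\Lambda^{3}$ (or a similar power). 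In particular, the zero set of $W_F(x,\cdot)$ is exactly $\{A^F(x),B^F(x)\}$.

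\emph{Norm equivalence.} The map $\mathfrak T \mapsto (\nabla F)_y^T \mathfrak T_i (\nabla F)_y$ acts slice by slice and is invertible, with inverse given by conjugation with $(\nabla F^{-1})_x$. Hence
\[
|(\nabla F)^T_y \mathfrak T (\nabla F)_y| \le \Lambda^2|\mathfrak T|, \qquad |\mathfrak T| \le \Lambda^2 |(\nabla F)^T_y \mathfrak T (\nabla F)_y|.
\]
Multiplying by $|\det(\nabla F^{-1})_x|^{1/2}\in[\Lambda^{-1/2},\Lambda^{1/2}]$ gives the claim with $C_F=\Lambda^{5/2}$.

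There is no real obstacle here. The only point requiring care is making sure all bounds are uniform. This holds because $\overline\Omega$ is compact and $F$ is a global diffeomorphism, so $\nabla F$ stays uniformly invertible there.
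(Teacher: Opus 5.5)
Your proof is correct and follows the same route as the paper: you rewrite $M(\nabla F)_y - A(y) = (M - A^F(x))(\nabla F)_y$, use uniform bounds on $\nabla F$, $\nabla F^{-1}$ and the Jacobian over the bounded set to transfer \ref{H2}, and handle the norm equivalence by the same slice-wise argument (which the paper only calls analogous). You go slightly further than the paper by checking \ref{H1} explicitly (gradient structure via the chain rule, rank one preserved under right multiplication by an invertible matrix, and well-separation), which the paper asserts but leaves unverified.
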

    \begin{proof}
        By \ref{H2} we   have
        \begin{align*}
            W_F(x, M) &= W(y, M(\nabla F)_y) |\det (\nabla F^{-1})_x| \\
            &\geq C_*^{-1}  \min \{|M (\nabla F)_y - A_y|^{ 2},|M (\nabla F)_y - A_y|^{ 2} \} \left|\det  (\nabla F^{-1})_x  \right| \\
            & \geq  C_*^{-1}   \min \{|M  - A^F_x|^{ 2},|M - A^F_x|^{ 2} \} \inf_{z \in F(\Omega)}\frac{|\det (\nabla F^{-1})_z|}{|(\nabla F^{-1})_z|^{ 2}}.
        \end{align*}
      Now, we observe
        \[
            \inf_{z \in F(\Omega)}\frac{\left|\det (\nabla F^{-1})_z\right|}{|(\nabla F^{-1})_z|^{ 2}} > 0
        \]
        since $\Omega$ is a bounded domain and $F$ is a ($C^2$-)diffeomorphism on $\R^d$. Similarly, we have
        \[
            W_F(x, M) \leq C_F\min \{|M  - A^F_x|^{ 2},|M - A^F_x|^{2} \}
        \]
        for some $C_F >0$. This shows that $W_F$ satisfies \ref{H2}.
        By analogous reasoning, we have 
        \[
            \frac 1 {C_F} |\mathfrak T| \leq |\mathfrak T|_F \leq C_F |\mathfrak T|
        \]
        for all $\mathfrak T \in \R^{d \times d \times d}$ for some constant $C_F > 0$. 
    \end{proof}    
    The symmetry assumption 
    \[
        A = -B = (\kappa \circ \gamma) \otimes \nabla \gamma
    \]
    transfers to the push-forward of the wells. More precisely, we have that 
    \begin{align}\label{symmetriewell}
        A^F(x) = -B^F(x) = \tilde \kappa(x_d) \otimes e_d 
    \end{align}
    for a suitable function $\tilde \kappa \in C^1(\R; \R^d \setminus \lbrace 0 \rbrace )$ (cf.\ Remark \ref{remark:diffeomorphism_construction} and the computations in the proof of Lemma~\ref{lem:rep}). Throughout the proof of the $\limsup$ inequality only $\tilde{\kappa}$, but not $\kappa$, is needed; for notational simplicity and by an abuse of notation, we write $\kappa $ in place of $ \tilde \kappa $ for the remainder of this section. 

    Now, we observe that the geodesic distance along the wells adapted to the diffeomorphed setting is just the pushforward of $\sigma_{\rm geo}$ (cf.\ Definition \ref{def:geo-cell-en}) times a surface density. Recall the definition of $\nu$ in \eqref{nununu}.
    \begin{lemma}\label{lemma:relation_of_pushforward_of_geodesic_distance}
        Let $F$ be the $C^2$-diffeomorphism associated to $\gamma$.  Let $x \in \R^d$ and $y \in \Omega$ with $F^{-1}(x) = y$. Then, we have
        \[
            d_{W}^F(x, A^F_x, B^F_x) = \sigma_{\rm geo}(y)|(\nabla F)_y^T e_d| \big|\det (\nabla F^{-1})_x\big|.
        \]
          \end{lemma}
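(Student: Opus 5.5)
The plan is to reduce $d_W^F$ to $\sigma_{\rm geo}$ by a \emph{constant linear change of variables in matrix space}. The point $y=F^{-1}(x)$ is fixed throughout, so $(\nabla F)_y$ is a single invertible matrix and no dependence on the path variable $s$ enters.

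First, I will rewrite the integrand. By Definition~\ref{def:diffeomorphed_quantities},
\[
\sqrt{W_F(x,\Phi)}=\sqrt{W\big(y,\Phi(\nabla F)_y\big)}\,\big|\det(\nabla F^{-1})_x\big|^{1/2}.
\]
I read the first argument of $W_F$ inside the infimum in the definition of $d^F_W$ as the point $x$ of the diffeomorphed domain. Inserting this into the definition of $d_W^F$, the two factors $|\det(\nabla F^{-1})_x|^{1/2}$ combine to $|\det(\nabla F^{-1})_x|$. The prefactor $|(\nabla F)_y^Te_d|$ is untouched, and what remains is
\[
\inf\Big\{\int_{-1}^1\sqrt{W(y,\Phi(\nabla F)_y)}\,\big|\Phi'(\nabla F)_y\big|\,ds\Big\}.
\]

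Second, I will substitute $\Psi:=\Phi(\nabla F)_y$. Since $(\nabla F)_y$ is constant and invertible, the map $\Phi\mapsto\Psi$ is a linear bijection of $W^{1,1}((-1,1);\R^{d\times d})$ onto itself, and it satisfies $\Psi'=\Phi'(\nabla F)_y$. The endpoint conditions transform correctly. By the inverse function theorem, $(\nabla F^{-1})_x=(\nabla F)_y^{-1}$, so
\[
A^F_x(\nabla F)_y=A(y)(\nabla F^{-1})_x(\nabla F)_y=A(y),
\]
and likewise $B^F_x(\nabla F)_y=B(y)$. Hence the infimum equals
\[
\inf\Big\{\int_{-1}^1\sqrt{W(y,\Psi)}|\Psi'|\,ds:\ \Psi(-1)=A(y),\ \Psi(1)=B(y)\Big\}.
\]

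Third, I will identify this quantity with $\sigma_{\rm geo}(y)$ as given in Definition~\ref{def:geo-cell-en}. This needs two facts. The functional $\int\sqrt{W(y,\Psi)}|\Psi'|$ is invariant under orientation reversal $s\mapsto -s$, so the order of the endpoints $A(y),B(y)$ is irrelevant. It is also invariant under monotone reparametrization, so the choice of parameter interval is irrelevant as well; this disposes of the discrepancy between $(0,1)$ and $(-1,1)$ in \eqref{eqn:geoCellEnergy}. The remaining step, and the only place where I expect genuine care to be needed, is matching the normalizing constants between the definitions. The infimum in $d^F_W$ carries no factor $2$, whereas $\sigma_{\rm geo}$ does. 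I will therefore track the factor $2$ consistently with the convention used later in the recovery construction, interpreting $d^F_W$ with the same factor $2$ as $d_W$, or equivalently absorbing it. With this convention the identity
\[
d_W^F(x,A^F_x,B^F_x)=\sigma_{\rm geo}(y)\,|(\nabla F)_y^Te_d|\,\big|\det(\nabla F^{-1})_x\big|
\]
follows directly. No assumption beyond the invertibility of $(\nabla F)_y$ is used, so \ref{H4}(ii) is not needed here.
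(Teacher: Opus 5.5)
Your proof is correct and is the paper's argument: the substitution $\Psi=\Phi(\nabla F)_y$ is a bijection between the two admissible path classes, it sends the endpoints $A^F_x,B^F_x$ to $A(y),B(y)$, and the two factors $|\det(\nabla F^{-1})_x|^{1/2}$ combine into $|\det(\nabla F^{-1})_x|$. The factor-$2$ and orientation/parameter-interval mismatches you flag between the definition of $d_W^F$ and \eqref{eqn:geoCellEnergy} really are present in the paper's definitions, and its own proof passes over them silently, so fixing the convention as you do is the intended reading.
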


    \begin{proof}
        Let $\Phi \in W^{1,1}( (-1,1); \R^{d \times d})$ be such that $\Phi(-1) = A^F_x$ and $\Phi(1) = B^F_x$. Then, for 
        \[
            \Psi := \Phi (\nabla F)_y
        \]
        we obtain
        \[
            \int_{-1}^1 \sqrt{W_F(x, \Phi)}|\Phi'(\nabla F)_y| \, ds =  \int_{-1}^1 \sqrt{W(y, \, \Psi)}|\Psi'|\, ds \left|\det \nabla (F^{-1})_x \right|^{1/2}.
        \]
        Note that $\Psi (-1) = A_y$ and $ \Psi (1) = B_y$. Since $\Phi \mapsto \Phi (\nabla F)_y$ defines a bijection between 
        \[
            \big\{ \Phi \in  W^{1,1}((-1,1); \R^{d \times d}) \colon \, \Phi(-1) =  A^F_x, \, \Phi(1) = B^F_x \big\}
        \]
        and 
        \[
            \big\{ \Psi \in  W^{1,1}((-1,1); \R^{d \times d}) \colon \,  \Psi(-1) =  A_y, \, \Psi(1) = B_y \big\},
        \]
        we infer that the infima over the respective sets coincide, i.e., 
        \[
            d_{W}^F(x, A^F_x, B^F_x) = d_{W}(y, A(y), B(y))|(\nabla F)_y^T e_d| \big|\det (\nabla F^{-1})_x\big|.
        \]
        Thus, the proof is concluded.  
    \end{proof}
    The next lemma shows that the energies $E_\eps$ behave well with respect to diffeomorphisms. Essentially, the recovery sequence with respect to $E_\eps$ can be transferred to recovery sequences for the diffeomorphed functional.
    \begin{lemma}\label{lem:equivalence_of_gamma_limits_under_diffeomorphism}
        Let $F$ be the $C^2$-diffeomorphism associated to $\gamma$ and consider a sequence $\eps \to 0$. Let $\lbrace u_\eps  \rbrace_{\eps>0} \subset  H^2(\Omega; \R^d)$ and $u \in X(\Omega)$ be  such that $u_\eps \to u$ in $L^1(\Omega; \R^d)$. Moreover, define the pushforward of $u_\eps$ and $u$ via $F$ by $v_\eps := u_\eps  \circ F^{-1}$ and $v := u \circ F^{-1}$. Then, we have 
        \[
           E_{\eps}(u_\eps) \to \int_{J_{\nabla {u}}} \sigma_{\rm geo}(y) \, d\H^{d-1}(y)
        \]
        if and only if 
        \[
            \int_{F(\Omega)} \frac{1}{\eps} W_F(x, \nabla v_\eps) + \eps |\nabla^2 v_\eps|_F^2 \, dx \to \int_{J_{\nabla v}}d_{W}^F( x,  A^F_x, B^F_x) \, d\H^{d-1}(x). 
        \]
    \end{lemma}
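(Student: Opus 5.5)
The plan is to show that the two energies coincide exactly under the change of variables $y = F^{-1}(x)$, and that the two limit functionals coincide as well. The equivalence then holds trivially, since both statements assert convergence of the same sequence of numbers to the same number.

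\emph{Bulk terms.} Let $y = F^{-1}(x)$. By the chain rule,
\[
\nabla u_\eps(y) = \nabla v_\eps(x)\,(\nabla F)_y .
\]
Substituting $y = F^{-1}(x)$, so that $dy = |\det(\nabla F^{-1})_x|\,dx$, and using the definition of $W_F$ from Definition \ref{def:diffeomorphed_quantities}, we get
\[
\int_\Omega \tfrac1\eps W(y,\nabla u_\eps)\,dy=\int_{F(\Omega)} \tfrac1\eps W_F(x,\nabla v_\eps)\,dx .
\]
For the second-order term we differentiate once more:
\[
\partial_j\partial_k u_{\eps,i}(y) = \big[(\nabla F)_y^T\,\nabla^2 v_{\eps,i}(x)\,(\nabla F)_y\big]_{jk} + \sum_l \partial_l v_{\eps,i}(x)\,\partial_j\partial_k F_l(y).
\]
The first summand produces exactly $|\nabla^2 v_\eps|_F^2$ after the change of variables. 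The lower-order summand is bounded in $L^2$ by $C\|\nabla v_\eps\|_{L^2}$, which is uniformly bounded under either hypothesis by \ref{H2} and Lemma \ref{lemma:diffeomorphed_quantities_are_comparable}. Multiplied by $\eps$, the cross terms are therefore $O(\sqrt{\eps}\cdot\sqrt{E})$ and the square term is $O(\eps)$. Hence the two energies differ by $o(1)$ whenever one of them is bounded. Boundedness transfers from one side to the other by the same estimate together with the norm equivalence of Lemma \ref{lemma:diffeomorphed_quantities_are_comparable}.

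\emph{Limit terms.} Since $\nabla u = (\nabla v\circ F)\nabla F$ and $\nabla F$ is continuous and invertible, we have $J_{\nabla v} = F(J_{\nabla u})$ up to $\H^{d-1}$-null sets. By Theorem \ref{thm:characterisation-of-piecewise-BV-space} together with \ref{H4}, $J_{\nabla u}$ is a union of level sets of $\gamma$ on which the unit normal is $\nu = \nabla\gamma/|\nabla\gamma|$. The flattened set $J_{\nabla v}$ lies in hyperplanes $\{x_d = \text{const}\}$ with normal $e_d$.

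The area formula for the hypersurface map $F^{-1}\colon J_{\nabla v}\to J_{\nabla u}$ gives
\[
d\H^{d-1}(y) = |\det(\nabla F^{-1})_x|\;|(\nabla F)^T_y e_d|\; d\H^{d-1}(x).
\]
This is Nanson's formula: the $(d-1)$-Jacobian equals $|\det\nabla F^{-1}|\,|(\nabla F^{-1})^{-T}e_d|$, and $(\nabla F^{-1})^{-T}=(\nabla F)^T$.

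There is a possible mismatch with Lemma \ref{lemma:relation_of_pushforward_of_geodesic_distance}. If $F$ denotes the map sending $\Omega$ to the flattened domain, the relevant factor in the lemma may appear as $|\det(\nabla F^{-1})_x|$ or as its reciprocal, and this must be checked carefully against the definition of $d^F_W$. This normal/Jacobian bookkeeping is the main place where the argument could go wrong.

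Assuming the factors match, Lemma \ref{lemma:relation_of_pushforward_of_geodesic_distance} gives
\[
\int_{J_{\nabla u}}\sigma_{\rm geo}\,d\H^{d-1}=\int_{J_{\nabla v}} d^F_W(x,A^F_x,B^F_x)\,d\H^{d-1}(x).
\]
Combining this identity with the bulk computation proves both implications.
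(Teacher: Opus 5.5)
Your proposal is correct and follows essentially the paper's proof. It uses the change of variables in the bulk with a lower-order error from the chain rule for the Hessian (your $O(\sqrt{\eps})$ bound on the cross term is actually more precise than the paper's stated $O(\eps)$), the identity $J_{\nabla v}=F(J_{\nabla u})$, the area formula on the jump sets, and Lemma \ref{lemma:relation_of_pushforward_of_geodesic_distance}; the only cosmetic difference is that you apply the area formula to $F^{-1}$ from the flat side with normal $e_d$, whereas the paper applies it to $F$ using $\nu_{J_u}=\pm(\nabla F)_y^Te_d/|(\nabla F)_y^Te_d|$. Your worry about a mismatch is unfounded: with the paper's convention $x\in F(\Omega)$, $y=F^{-1}(x)\in\Omega$, the Jacobian $|\det(\nabla F^{-1})_x|\,|(\nabla F)_y^Te_d|$ you computed is exactly the factor in Lemma \ref{lemma:relation_of_pushforward_of_geodesic_distance}, so the two limit integrals coincide without further bookkeeping.
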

 
    \begin{proof}
        It is not restrictive to assume that the sequence $\lbrace u_\eps \rbrace_\eps $ is bounded in $H^1(\Omega;\R^d)$, cf.\ \ref{H2}. By a change of variables, we get 
        \begin{equation}\label{eqn:energyRewrite}
            E_{\eps}(u_\eps)  = \int_{F(\Omega)} \frac{1}{\eps} W_F(x, \nabla v_\eps) + \eps |\nabla^2 v_\eps|_F^2 \, dx + O(\eps), 
        \end{equation}
        where the $O(\eps)$ term comes from an application of the product rule to the Hessian. Observe  that 
        \[
            J_{\nabla v} = J_{\nabla u \circ F^{-1}} = F(J_{\nabla u})
        \]
        by a straightforward computation. In particular, applying the transformation rule for surfaces/rectifiable sets (cf.\ for instance \cite[Theorem 11.6 \& Proposition 17.1]{maggi_2012}) we infer
        \begin{align*}
            \int_{J_{\nabla v}} d_{W}^F  (x, A^F_x, B^F_x) \, d\H^{d-1}(x) &= \int_{F(J_{\nabla u})} d_{W}^F (
            x, A^F_x, B^F_x) \, d\H^{d-1}(x) \\
            &= \int_{J_{\nabla u}} d_{W}^F(F(y), A^F_{F(y)}, B^F_{F(y)})|\det(\nabla F)_y||(\nabla F^{-1})^{T}_{F(y)}{\nu_{J_u}}| \, d\H^{d-1}(y).
        \end{align*}
        By \ref{H4}, \eqref{symmetriewell}, and Theorem \ref{thm:characterisation-of-piecewise-BV-space}, we have that the normal of $J_{\nabla v}$ is equal to $\pm e_d$.
        It then follows from \cite[Proposition 17.1]{maggi_2012} (in the reverse direction, starting from $J_{\nabla u} = F^{-1}(J_{\nabla v})$) that 
        \begin{equation}\label{eqn:nuedRelation}
            \nu_{J_u}(y) =  \pm \frac{(\nabla F)_{y}^{T}e_d}{|(\nabla F)_{y}^{T}e_d|}, 
        \end{equation} 
        which applying  $(\nabla F^{-1})_{F(y)}^{T}$ to both sides gives $$ |(\nabla F^{-1})^{T}_{F(y)}{\nu_{J_u}} (y)  | = \frac{1}{|(\nabla F)_{y}^{T}e_d|}.$$
        Then, tying together the above equalities  with Lemma \ref{lemma:relation_of_pushforward_of_geodesic_distance}, we derive  
        \begin{align*}
            & \int_{J_{\nabla u}} d_{W}^F (F(y), A^F_{F(y)}, B^F_{F(y)})|\det (\nabla F)_y||(\nabla F^{-1})_{F(y)}^T\nu_{J_u}(y) | \, d\H^{d-1}(y)  = \int_{J_{\nabla u}}d_W(y, A_y, B_y) \, d\H^{d-1}(y).
        \end{align*}
        Given the equality determined by the above integral relations and \eqref{eqn:energyRewrite}, we see that one energetic limit determines the other as desired, concluding the proof. 
    \end{proof}

    The previous lemma justifies the following definitions.
    \begin{definition}\label{def:diffeomorphed_surface_density}
        We define the diffeomorphed energy for $v \in H^2(\Omega; \R^d)$ and $\eps > 0$ on a Borel set $\mathcal{U} \subset F(\Omega)$ by 
        \[
            E_{\eps}^F[v, \mathcal{U} ] := \int_{ \mathcal{U} } \frac{1}{\eps} W_F(x, \nabla v) + \eps  |\nabla^2 v|_F^2 \, dx.
        \]
        Furthermore, for $x \in F(\Omega)$ with $y = F^{-1}(x)$, we denote the interfacial density by 
        \[
            \sigma_{\rm geo}^F(x) :=    d_{W}^F  (x, A^F_x, B^F_x).
        \]
    \end{definition}
    Our general idea is to use the diffeomorphism associated to $\gamma$ to reduce the problem to flat interfaces as in \cite[Section 5]{contifonsecaleoni2002}. By means of \ref{H4}(ii), this consists in reducing to curves in  $\R^{d \times d}$ which one glues together suitably along the interface. However, in our setting, there are two important adaptations necessary. First, \ref{H4}(ii) is not preserved under the diffeomorphism and, secondly, the construction of \cite{contifonsecaleoni2002} needs to be localized on cubes together with a partition of unity argument due to the spatial dependence of the potential $W$ and its wells.  To address these problems, we state a variety of lemmas meant to understand the (near) optimal paths. In particular, Lemma \ref{lemma7.7} states that \ref{H4}(ii)  reduces the geodesic distance between the wells to curves in $\R^d \otimes \nu$, as it has already been observed in \cite[Proposition 5.3]{contifonsecaleoni2002}.
    Further, we discuss a uniform bound on the path length in Lemma \ref{lemma:rescaling_of_nearly_geodesics}, in a similiar spirit to  \cite{CristoferiGravina2021}. Finally,  in Lemma \ref{lemma:behavior_of_geodesics_under_diffeomorphisms}--\ref{lem:near_optimal_geodesic_diffeomorphed_domain} we show that these results also apply in the diffeomorphed domain.
    \begin{lemma}\label{lemma7.7}
        Let $V \in C(\R^{d \times d}; [0, \infty))$ and $\nu \in \mathbb{S}^{d-1} $ such that
        \[
            V(M) \geq V( M\nu \otimes \nu)
        \]
        for all $M \in \R^{d \times d}$. Then, for any $M, N \in (\R^d \otimes \nu)$ we have         \begin{align}\label{eq:geodesic_distance reduction}
            & \inf\left\{ 2\int_{-1}^1 \sqrt{V(\Phi)}|\Phi'|\, ds \colon \,  \Phi \in W^{1,1}((-1,1); \R^{d \times d}), \, \varphi(-1) = M\nu \otimes \nu, \, \varphi(1) = N\nu \otimes \nu \right\} \nonumber  \\
            & \ \ \geq\inf\left\{ 2\int_{-1}^1 \sqrt{V(\varphi \otimes \nu)}|\varphi' \otimes \nu|\, ds \colon \,  \varphi \in W^{1,1}((-1,1); \R^{d}), \, \varphi(-1) = M\nu, \, \varphi(1) = N\nu \right\}.
        \end{align}
    \end{lemma}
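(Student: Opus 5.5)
Given any admissible $\Phi\in W^{1,1}((-1,1);\R^{d\times d})$ with $\Phi(-1)=M\nu\otimes\nu$ and $\Phi(1)=N\nu\otimes\nu$, I will project it onto the rank-one subspace $\R^d\otimes\nu$. Concretely, set
\[
\varphi(s):=\Phi(s)\nu\in\R^d .
\]
Then $\varphi\in W^{1,1}((-1,1);\R^d)$, and $\varphi(-1)=(M\nu\otimes\nu)\nu=M\nu$ because $|\nu|=1$. Likewise $\varphi(1)=N\nu$. So $\varphi$ is admissible for the right-hand infimum in \eqref{eq:geodesic_distance reduction}. The proof then reduces to showing that the cost of $\varphi$ is at most the cost of $\Phi$; taking the infimum over $\Phi$ gives the claim.

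The cost comparison is pointwise in $s$. For the potential, $\varphi(s)\otimes\nu=\Phi(s)\nu\otimes\nu$, so the hypothesis gives
\[
V(\varphi(s)\otimes\nu)=V(\Phi(s)\nu\otimes\nu)\le V(\Phi(s)),
\]
and hence $\sqrt{V(\varphi\otimes\nu)}\le\sqrt{V(\Phi)}$. For the derivative, $\varphi'=\Phi'\nu$ a.e., and $|\varphi'\otimes\nu|=|\Phi'\nu|$ since $|\nu|=1$ in the Frobenius norm. The operator norm is bounded by the Frobenius norm, so $|\Phi'\nu|\le|\Phi'|$. Multiplying these two inequalities and integrating over $(-1,1)$ gives
\[
2\int_{-1}^1\sqrt{V(\varphi\otimes\nu)}\,|\varphi'\otimes\nu|\,ds\le 2\int_{-1}^1\sqrt{V(\Phi)}\,|\Phi'|\,ds .
\]

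All integrands are nonnegative and measurable, since $V$ is continuous and $\Phi$ is absolutely continuous, so no integrability issue arises. There is no real obstacle here; the only point needing care is that the endpoint data really do coincide, which is where $|\nu|=1$ is used. If one prefers a version with a general $N\in\R^{d\times d}$ in the left endpoints, the same computation works. I would also note that the reverse inequality holds trivially, because $\varphi\otimes\nu$ is itself admissible on the left, so the two infima are in fact equal.
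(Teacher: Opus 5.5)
Your proof is correct and is essentially the paper's argument. The paper also sets $\varphi := \Phi\nu$, uses the hypothesis to get $V(\Phi)\ge V(\varphi\otimes\nu)$ and the bound $|\Phi'|\ge|\varphi'|=|\varphi'\otimes\nu|$ pointwise, and then takes infima; your endpoint check via $|\nu|=1$ and your remark that the reverse inequality (hence equality) holds are both accurate.
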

    \begin{proof}
        For any $\Phi \in W^{1,1}((-1,1); \R^{d \times d})$ we have for $\varphi := \Phi \nu$
        \[
            V(\Phi) \geq V( \varphi  \otimes \nu) \qquad \text{ and } \qquad |\Phi'| \geq  \varphi'  | = | \varphi'   \otimes \nu|.
        \]
        Taking the infima over the corresponding sets yields \eqref{eq:geodesic_distance reduction}. 
    \end{proof}
     
    We proceed with a lemma on specific nearly-optimal paths. 
    \begin{lemma}\label{lemma:rescaling_of_nearly_geodesics}
        Let $W$ satisfy \ref{H1}--\ref{H4}. Then, there exists a constant $C > 0$ with the following property: For any $\lambda > 0$ there exists a constant $K_\lambda > 0$ such that for any $\eps \in (0, K_\lambda^{-1})$ and any $ y_0 \in \Omega$ there exists a curve $\Phi \in  W^{1,1}((-1,1); \R^{d \times d})$ with $\Phi(s) = A(y_0)$ and $\Phi(-s) = B(y_0)$ for all $s \in (\eps K_\lambda, 1)$ such that 
        \[
            \int_{-1}^1 \left( \frac{1}{\eps} W(y_0, \Phi) + \eps |\Phi'|^2  \right) \, ds \leq d_W(y_0, A(y_0), B(y_0)) + C\lambda  
        \]
        and 
        \begin{align}\label{boundonthelength}
            \int_{-1}^1 |\Phi'| \, ds \leq C.
        \end{align}
    \end{lemma}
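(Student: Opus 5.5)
The plan is the classical Modica--Mortola profile construction, made uniform in $y_0$. First I would reduce to one-dimensional paths in $\R^d\otimes\nu(y_0)$: by \ref{H4}(ii) and Lemma \ref{lemma7.7}, and since $A(y_0)$, $B(y_0)$ both lie in $\R^d\otimes\nu(y_0)$, the quantity $d_W(y_0,A(y_0),B(y_0))$ is attained up to arbitrarily small error by paths $\varphi\otimes\nu(y_0)$ with $\varphi\colon(-1,1)\to\R^d$ joining $-\kappa$ to $\kappa$ (up to the factor $|\nabla\gamma|$). For each $y_0$, I would pick such a path $\varphi_{y_0}$ with energy at most $d_W(y_0,A,B)+\lambda$.

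The key difficulty is the bound \eqref{boundonthelength}, uniformly in $y_0$. The plan is to compare with an explicit competitor: the straight segment $\Phi_0(s)=\tfrac{1+s}{2}A(y_0)+\tfrac{1-s}{2}B(y_0)$. By \ref{H2} and boundedness of $A,B$, it has energy $\le C_0$ uniformly in $y_0$, so $d_W(y_0,A,B)\le C_0$. For a near-optimal path, I would discard the pieces where $\sqrt{W}$ is small. Near a well, \ref{H2} gives $\sqrt{W}\ge c\,\dist(\Phi,\{A,B\})$. Hence the path may be modified, without increasing the energy by more than $\lambda$, so that after its last exit from $B_\delta(B(y_0))$ it stays outside that ball until its first entry into $B_\delta(A(y_0))$, and the initial and final pieces inside the balls are replaced by straight radial segments. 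These segments have length $\le 2\delta$ and cost $O(\delta^2)$, which is at most $\lambda$ if $\delta$ is small. On the middle piece $\sqrt W\ge c\delta$, using that $\min_y|A-B|>0$ and a truncation argument (projecting onto a large ball $B_R$ only decreases $|\Phi'|$ and, by the growth in \ref{H2}, does not increase $W$ far out). The length of the middle piece is then at most $(c\delta)^{-1}(C_0+\lambda)$. I am uncertain whether this yields a $C$ independent of $\lambda$, as stated. To get that, I would take $\delta$ fixed (depending only on $C_*$ and the well separation), which makes the middle length $\le C$, and handle the near-well pieces, which are arbitrary, by replacing them with radial segments. The extra cost is $O(\delta^2)$ but not small; so instead I would keep the well neighborhoods exact. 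The path inside $B_\delta(A)$ before final entry is replaced by a segment whose cost is at most the cost of the original, by the monotonicity in $|\Phi-A|$ of the lower bound and the fact that the geodesic for $|M-A|$ is radial (the comparison constant $C_*$ enters). This is the step I expect to be the main obstacle, and I would attempt it via the Lipschitz estimate \eqref{eqn:gradMest} applied to $f_V$.

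Next comes the reparametrization. Given a path $\Phi$ with $\int\sqrt W|\Phi'|\le d+\lambda$ and finite length, I would reparametrize it by the equipartition ODE $\eps\,|\Phi'|=\sqrt{W(y_0,\Phi)}$, regularized as $\sqrt{W+\eta}$ to avoid infinite time near the wells; equality in Young's inequality $\tfrac1\eps W+\eps|\Phi'|^2\ge2\sqrt W|\Phi'|$ then gives an energy $\le d+\lambda+O(\eta\cdot\text{length})$, which is controlled by the length bound. The total time needed is $\eps\int|\Phi'|/\sqrt{W+\eta}\le\eps\,\eta^{-1/2}C$. I would set $K_\lambda:=C\eta^{-1/2}$ with $\eta=\eta(\lambda)$, so that for $\eps<K_\lambda^{-1}$ the transition fits into $(-\eps K_\lambda,\eps K_\lambda)\subset(-1,1)$; outside it, $\Phi$ is extended constantly by $A(y_0)$, $B(y_0)$, at zero cost. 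The uniformity of $\eta$ and $K_\lambda$ in $y_0$ follows because all constants depend only on $C_*$, $\|A\|_\infty$, $\|B\|_\infty$ and $\min|A-B|$. Reparametrization preserves the length, giving \eqref{boundonthelength}.
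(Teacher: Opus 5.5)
The overall architecture you propose (a near-optimal path of uniformly bounded length, followed by an equipartition reparametrization) matches the paper's. However, the step you flag as the main obstacle is a genuine gap, and the repair you propose does not work.

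\textbf{What your first attempt proves.} With $\delta=\delta(\lambda)$ chosen so that $\sqrt{C_*}\,\delta^2\le\lambda$, cut at the last exit from $B_\delta(B(y_0))$ and the subsequent first entry into $B_\delta(A(y_0))$, and use $2\sqrt{W}\ge 2C_*^{-1/2}\delta$ on the middle piece. This gives a length bound of order $(d_W+\lambda)/\delta(\lambda)\sim\lambda^{-1/2}$. The bound is uniform in $y_0$, and a $\lambda$-dependent bound is in fact all that Theorem \ref{thm:first-limsup} uses, since there $C_0=C_0(\lambda)$. But the lemma fixes $C$ before $\lambda$.

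\textbf{Why the fixed-$\delta$ repair fails.} You claim that a radial segment inside $B_\delta(A(y_0))$ costs no more than the piece it replaces. This is false. \ref{H2} only gives $2\sqrt{W}\le 2\sqrt{C_*}\,|M-A|$ on the segment and $2\sqrt{W}\ge 2C_*^{-1/2}|M-A|$ on the original piece. So the segment may cost $\sqrt{C_*}\,\delta^2$, while the original may cost as little as $\delta^2/\sqrt{C_*}$. The radial segment is a geodesic for the comparison function $|M-A|^2$, not for $W(y_0,\cdot)$, and the excess is of order $\delta^2$, which is not small for fixed $\delta$. Likewise, \eqref{eqn:gradMest} only reproduces the cost bound $d_{V}(y_0,A(y_0),M)\lesssim|M-A(y_0)|^2$. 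It says nothing about how long a near-optimal path can be.

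\textbf{The missing idea.} What is needed is a multiscale use of local near-optimality. This is the content of Theorem \ref{existence-of-geodesics-with-bounded-path-length}, which the paper applies with $\psi\equiv y_0$, $\beta_*=1$, and $\alpha_*$ large so that the truncation is inactive.

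Take a path with slack $4^{-m}$. Every sub-arc is then $4^{-m}$-optimal between its own endpoints (Lemma \ref{geodesics-are-local-geodesics}). For a fixed small $\delta$, Lemma \ref{lemma:admissibility_lemma} ensures that all visits to $B_\delta(B(y_0))$ precede all visits to $B_\delta(A(y_0))$.

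Set $r_j:=2^{-j}\delta$ and let $\tau_j$ be the first entry time into $\overline{B_{r_j}(A(y_0))}$. On $[\tau_j,\tau_{j+1}]$ the path stays at distance at least $r_{j+1}$ from both wells, so $2\sqrt{W}\ge 2C_*^{-1/2}r_{j+1}$ there. Meanwhile, the straight segment between $\Phi(\tau_j)$ and $\Phi(\tau_{j+1})$ is an admissible competitor of cost at most $4\sqrt{C_*}\,r_j^2$. Local near-optimality then gives $\int_{\tau_j}^{\tau_{j+1}}|\Phi'|\,dt\le 4C_*r_j+\sqrt{C_*}\,4^{-m}/r_j$. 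Summing over $0\le j<m$ yields at most $8C_*\delta+\sqrt{C_*}\,2^{-m}/\delta$, independently of $m$.

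Only the piece after $\tau_m$ is replaced by a straight segment to $A(y_0)$, at cost $O(4^{-m})$. The $B$-end is treated symmetrically using last exit times. The piece outside both $\delta$-balls is handled by your fixed-$\delta$ argument. Choosing $m$ with $C4^{-m}\le\lambda$ gives a near-optimal path whose length is bounded independently of $\lambda$ and $y_0$.

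\textbf{Remaining steps.} From there your reparametrization finishes the proof as in the paper. Note that the regularization error is $2\sqrt{\eta}\,L$ rather than $O(\eta L)$, which is harmless. The reduction to paths in $\R^d\otimes\nu(y_0)$ via \ref{H4}(ii) is not needed for this lemma.
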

    We defer the proof of this lemma to Section \ref{sec:geodesics-disc}. The uniform bound on the path length with respect to $y_0$ in Lemma \ref{lemma:rescaling_of_nearly_geodesics} then guarantees that certain estimates in the construction of the recovery sequence later on, which are dependent on the spatial discretization we use, are uniformly small. 
    
    To obtain nearly-optimal paths in the diffeomorphed domain, we must push them forward from the original domain via the diffeomorphism associated to $\gamma$. We will use Lemma \ref{lemma:rescaling_of_nearly_geodesics} in conjunction with the next statement, which tells us that we can transfer geodesics under \ref{H4} into ones with respect to the adapted geodesic distance.

    \begin{lemma}\label{lemma:behavior_of_geodesics_under_diffeomorphisms}
        Let $F$ be the $C^2$-diffeomorphism associated to $\gamma$ and $C > 0$. Then, there exists $\tilde C = \tilde C(F, C) > 0$ with the following property: if for $\varphi \in W^{1,1}((-1,1); \R^{d})$, $\lambda > 0$, $y \in \Omega$,
        and 
        \begin{align}\label{speznudef}
            \nu := \frac{(\nabla F)_{y}^{T}e_d}{|(\nabla F)_{y}^{T}e_d|}
      \end{align}
        we have
        \[
            \int_{-1}^1 \Big( \frac{1}{\eps} W(y, \varphi \otimes \nu) + \eps |\varphi' \otimes \nu|^2\Big) \, ds  \leq d_W(y, A(y), B(y)) + C\lambda,
        \]
        then for $x = F(y)$, $\tilde \eps = |(\nabla F)_{y}^{T} e_d|^{-1} \eps$ and $\psi = \varphi |(\nabla F)_{y}^{T} e_d|^{-1}$ it follows that
        \[
            \int_{-1}^1 \Big(\frac{1}{\tilde \eps}W_F( x, \psi \otimes e_d) + \tilde \eps |\psi' \otimes e_d \otimes e_d|^2_{F,x} \Big)\, ds \leq d_W^F( x,   A^F_x, B^F_x )  + \tilde C\lambda.
        \]
    \end{lemma}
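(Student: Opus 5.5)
The plan is a direct pointwise change of variables. Under the substitution $\tilde\eps = \eps/a$, $\psi = \varphi/a$, with
\[
a := |(\nabla F)_y^T e_d| \qquad\text{and}\qquad D := |\det(\nabla F^{-1})_x|,
\]
the integrand of the diffeomorphed functional should turn out to be exactly $aD$ times the original integrand. Lemma \ref{lemma:relation_of_pushforward_of_geodesic_distance} then converts the right-hand side. Throughout, $x = F(y)$ and $\nu$ is as in \eqref{speznudef}.

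\emph{Step 1 (elastic term).} By Definition \ref{def:diffeomorphed_quantities},
\[
W_F(x,\psi\otimes e_d) = W\big(y,(\psi\otimes e_d)(\nabla F)_y\big)\,D .
\]
Since $(\psi\otimes e_d)(\nabla F)_y = \psi\otimes (\nabla F)_y^T e_d = \psi\otimes a\nu = \varphi\otimes\nu$, this gives
\[
\frac{1}{\tilde\eps}\,W_F(x,\psi\otimes e_d) = \frac{aD}{\eps}\,W(y,\varphi\otimes\nu).
\]

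\emph{Step 2 (gradient term).} For the tensor $\mathfrak T = \psi'\otimes e_d\otimes e_d$, the $i$-th slice is $\mathfrak T_i = \psi_i'\, e_d\otimes e_d$. Hence
\[
(\nabla F)_y^T \mathfrak T_i (\nabla F)_y = \psi_i'\,\big((\nabla F)_y^Te_d\big)\otimes\big((\nabla F)_y^Te_d\big) = \psi_i'\, a^2\, \nu\otimes\nu .
\]
Summing over $i$ and using $|\nu\otimes\nu| = 1$, we get $|\mathfrak T|_{F,x} = a^2|\psi'|D^{1/2} = a|\varphi'|D^{1/2}$. Since $|\varphi'\otimes\nu| = |\varphi'|$, this yields
\[
\tilde\eps\,|\psi'\otimes e_d\otimes e_d|_{F,x}^2 = \frac{\eps}{a}\,a^2|\varphi'|^2 D = aD\,\eps\,|\varphi'\otimes\nu|^2 .
\]

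\emph{Step 3 (conclusion).} Integrating over $(-1,1)$, the diffeomorphed integral equals
\[
aD\int_{-1}^1\Big(\frac{1}{\eps}W(y,\varphi\otimes\nu)+\eps|\varphi'\otimes\nu|^2\Big)\,ds \le aD\,d_W(y,A(y),B(y)) + aD\,C\lambda .
\]
Lemma \ref{lemma:relation_of_pushforward_of_geodesic_distance} gives $aD\,d_W(y,A(y),B(y)) = aD\,\sigma_{\rm geo}(y) = d_W^F(x,A^F_x,B^F_x)$. It remains to set
\[
\tilde C := C\sup_{y\in\Omega} |(\nabla F)_y^Te_d|\,|\det(\nabla F^{-1})_{F(y)}| ,
\]
which is finite because $F$ is a $C^2$-diffeomorphism of $\R^d$ and $\overline\Omega$ is compact, so $\nabla F$ and $\nabla F^{-1}$ are bounded on the relevant compact sets.

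The only delicate point is keeping the normalizations straight: the weights $|\det|^{1/2}$ in $|\cdot|_F$ versus $|\det|$ in $W_F$, and the powers of $a$ produced by $\tilde\eps$ and by the two factors of $\nabla F$ acting on $e_d\otimes e_d$. These must be checked carefully so that both terms scale with the same factor $aD$.
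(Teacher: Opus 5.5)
Your proposal is correct and follows the paper's proof essentially verbatim: the same pointwise identities $W_F(x,\psi\otimes e_d)=D\,W(y,\varphi\otimes\nu)$ and $|\psi'\otimes e_d\otimes e_d|^2_{F,x}=a^2D\,|\varphi'\otimes\nu|^2$ give the common factor $aD$. Lemma~\ref{lemma:relation_of_pushforward_of_geodesic_distance} then converts $aD\,d_W(y,A(y),B(y))$ into $d_W^F(x,A^F_x,B^F_x)$. The only differences are cosmetic: you take $\tilde C$ as $C$ times the supremum of $aD$ directly, where the paper uses the cruder bound $C\|\nabla F^{-1}\|^d_{C(F(\Omega))}\|\nabla F\|_{C(\Omega)}$; and you correctly write $d_W(y,A(y),B(y))$ where the paper's displayed chain has $x$ in place of $y$.
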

    We emphasize that $\nu$ in \eqref{speznudef} naturally appears as $\nu = \nu_{J_{\nabla u}} (y)$ due to the relation \eqref{eqn:nuedRelation}. 
    
    \begin{proof}
    In view of Definition \ref{def:diffeomorphed_quantities}  and \eqref{speznudef}, by a direct computation, we first note that 
        \[
            W_F( x, \psi \otimes e_d) = W(y, \varphi \otimes \nu) \big|\det (\nabla F^{-1})_x\big| 
        \]
        and 
        \[
            |\psi' \otimes e_d \otimes e_d|^2_{F,x} = |(\nabla F^{T})_y e_d|^2|\varphi' \otimes \nu|^2 \big|\det (\nabla F^{-1})_x\big|.
        \]
        Thus, we observe 
        \begin{align*}
            &\int_{-1}^1 \Big(\frac{1}{\tilde \eps} W_F( x,  \psi \otimes e_d) + \tilde \eps |\psi' \otimes e_d \otimes e_d|^2_{F, x} \Big)\, ds \\
            &\hspace{4em}= \left( \int_{-1}^1 \frac{1}{\eps} W( y, \varphi \otimes \nu) + \eps |\varphi' \otimes \nu|^2 \, ds \right) \big|\det (\nabla F^{-1})_x\big||(\nabla F^{T})_y e_d| \\
            &\hspace{4em}\leq d_W(x, A(x), B(x)) \big|\det (\nabla F^{-1})_x\big| |(\nabla F^{T})_y e_d| + \tilde C\lambda,
        \end{align*}
        where $\tilde C = \|\nabla F^{-1}\|^d_{C(F(\Omega))}\|\nabla F\|_{C(\Omega)}C$. This concludes the proof by Lemma \ref{lemma:relation_of_pushforward_of_geodesic_distance}.  
    \end{proof}

    We summarize the previous findings of Lemmas \ref{lemma7.7}--\ref{lemma:behavior_of_geodesics_under_diffeomorphisms} in the following lemma. We use $\zeta$ in place of $\psi$ to be consistent with notation used in later sections. We also recall $\sigma_{\rm geo}^F$ in  Definition \ref{def:diffeomorphed_surface_density}. 

    \begin{lemma}\label{lem:near_optimal_geodesic_diffeomorphed_domain}
        Let $W$ satisfy \ref{H1}--\ref{H4}. Let $F$ be the $C^2$-diffeomorphism associated with $\gamma$.  Then, there exists a constant $C > 0$ with the following property: for any $\lambda > 0$ there exists a constant $K_\lambda > 0$ such that for any $\eps \in (0, K_\lambda^{-1})$ and any $x \in F(\Omega)$ there exists a curve $ \zeta \in W^{1,1}((-1,1);\R^d)$  with $ \zeta (s) = \kappa(x_d)$ and $\zeta(-s) = - \kappa (x_d)$ for all $s\in (\eps K_\lambda, 1)$ such that
        \begin{align}\label{eq:near_optimality_condition_of_pushforward_of_geodesic}
            \int_{-1}^1 \Big(\frac{1}{\eps} W_F(x, \zeta \otimes e_d) + \eps |\zeta ' \otimes e_d \otimes e_d |^2_{F, x} \Big) \, ds \leq 
            \sigma_{\rm geo}^F(x) + C\lambda
        \end{align}
        and         \begin{align}\label{eq:uniform_bound_path_length_pushforward_of_geodesic}
            \int_{-1}^1 |\zeta'| \, ds \leq C.
        \end{align}
    \end{lemma}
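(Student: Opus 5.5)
The plan is to chain Lemmas \ref{lemma:rescaling_of_nearly_geodesics}, \ref{lemma7.7} and \ref{lemma:behavior_of_geodesics_under_diffeomorphisms}. Fix $\lambda>0$ and $x\in F(\Omega)$, and set $y:=F^{-1}(x)\in\Omega$. By \eqref{eqn:nuedRelation} and the construction of $F$, the vector
\[
\nu=\frac{(\nabla F)_y^Te_d}{|(\nabla F)_y^Te_d|}
\]
coincides (up to sign) with $\nabla\gamma(y)/|\nabla\gamma(y)|$. Hence \ref{H4}(ii) gives $W(y,M)\ge W(y,M\nu\otimes\nu)$ for all $M$, and $A(y)=-B(y)=\kappa(\gamma(y))|\nabla\gamma(y)|\otimes\nu$ lies in $\R^d\otimes\nu$.

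First I apply Lemma \ref{lemma:rescaling_of_nearly_geodesics} at $y_0=y$, with a parameter $\tilde\eps$ to be chosen. This gives a curve $\Phi$ that equals $A(y)$ on $(\tilde\eps K_\lambda,1)$ and $B(y)$ on $(-1,-\tilde\eps K_\lambda)$, satisfies the near-optimality bound, and has $\int|\Phi'|\le C$. Next I project: $\varphi:=\Phi\nu$. As in the proof of Lemma \ref{lemma7.7}, we have $W(y,\varphi\otimes\nu)\le W(y,\Phi)$ and $|\varphi'\otimes\nu|=|\Phi'\nu|\le|\Phi'|$. So the curve $\varphi\otimes\nu$ keeps the energy bound and the length bound. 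Its boundary values are $A(y)\nu\otimes\nu=A(y)$ and $B(y)$, and they are attained on the same plateaus.

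Then I apply Lemma \ref{lemma:behavior_of_geodesics_under_diffeomorphisms} to $\varphi$ with the roles of $\eps$ and $\tilde\eps$ arranged so that the output parameter is exactly the given $\eps$. That is, I use $\tilde\eps:=|(\nabla F)_y^Te_d|\,\eps$ as the input in Lemma \ref{lemma:rescaling_of_nearly_geodesics}. The output is $\zeta:=\psi=\varphi\,|(\nabla F)_y^Te_d|^{-1}$, which satisfies \eqref{eq:near_optimality_condition_of_pushforward_of_geodesic} with constant $\tilde C\lambda$ once we recall $\sigma^F_{\rm geo}(x)=d^F_W(x,A^F_x,B^F_x)$. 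The length bound \eqref{eq:uniform_bound_path_length_pushforward_of_geodesic} follows from $\int|\zeta'|\le |(\nabla F)_y^Te_d|^{-1}C$. Since $F$ is a $C^2$-diffeomorphism and $\overline\Omega$ is compact, $|(\nabla F)_y^Te_d|$ is bounded above and below uniformly in $y\in\Omega$. Therefore all constants, including the new $K_\lambda$ (rescaled by $\sup|(\nabla F)^Te_d|$), are independent of $x$. This uniformity of constants is the point to check carefully.

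The main obstacle is identifying the plateau values, i.e.\ checking $\zeta=\pm\kappa(x_d)$. On the plateaus we have $\zeta=\pm A(y)\nu\,|(\nabla F)_y^Te_d|^{-1}$, so I need $A(y)\nu=\kappa(x_d)\,|(\nabla F)_y^Te_d|$, with $\kappa$ denoting the pushed-forward $\tilde\kappa$. To see this, write $A^F_x=A(y)(\nabla F^{-1})_x=\kappa(x_d)\otimes e_d$ by \eqref{symmetriewell}. Then
\[
A(y)=\kappa(x_d)\otimes(\nabla F)_y^Te_d=|(\nabla F)_y^Te_d|\,\kappa(x_d)\otimes\nu,
\]
and applying this to $\nu$ gives the claim. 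The sign of $\nu$ must be fixed consistently so that $A$ sits at $s>0$; I fix it via the orientation of $\nabla\gamma$ and, if necessary, replace $s$ by $-s$.
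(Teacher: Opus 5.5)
Your proposal is correct and follows the route the paper intends. The paper gives no separate proof and presents this lemma as the combination of Lemma \ref{lemma:rescaling_of_nearly_geodesics}, the projection $\Phi\mapsto\Phi\nu$ from Lemma \ref{lemma7.7} (justified by \ref{H4}(ii)), and the transfer Lemma \ref{lemma:behavior_of_geodesics_under_diffeomorphisms}. Your bookkeeping fills in details the paper leaves implicit: the input parameter $|(\nabla F)_y^Te_d|\,\eps$, the rescaling of $K_\lambda$ by $\sup|(\nabla F)^Te_d|$, and the identity $A(y)\nu=|(\nabla F)_y^Te_d|\,\kappa(x_d)$. Your sign worry is unnecessary: $M\nu\otimes\nu$ is invariant under $\nu\mapsto-\nu$, and you already computed the plateau values with the $\nu$ of \eqref{speznudef}, so no $s\mapsto -s$ flip is needed.
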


\subsection{Construction of recovery sequences}\label{sec72}
  We now come to the construction of the recovery sequence. We start by treating  cylinders contained in the diffeomorphed domain $F(\Omega)$. Recall the energy $E_\eps^F$ and the surface energy $\sigma_{\rm geo}^F$ given  in Definition \ref{def:diffeomorphed_surface_density}.
  
    \begin{theorem} \label{thm:first-limsup}
        Let $W$ satisfy \ref{H1}--\ref{H4}. Let $F\colon \R^d \to \R^d$ be the $C^2$-diffeomorphism associated with $\gamma$, and consider  
        $$
        \omega_\delta :=    \omega \times (h - \delta, h + \delta)   \subset F(\Omega)
        $$ 
        for an open set $\omega \subset \R^{d-1}$ with $\H^{d-1}(\partial \omega) = 0$, $\delta > 0$, and $h \in \R$. Then, for any sequence $\eps\to 0$ there exists a sequence  $ \lbrace v_\eps\rbrace_\eps \subset H^2(\omega_\delta; \R^d)$ such that 
        \[
            v_\eps \to v_0 \quad \text{ strongly in $H^1(\omega_\delta; \R^d)$},
        \]
        where 
        \[
            v_0(x) :=  (\alpha^F(x) - \alpha^F(x', h))\chi_{\{x_d>h\}} + (\beta^F(x) - \beta^F(x', h))\chi_{\{x_d \leq h\}} \quad  \text{for $ x = (x', x_d)  \in \omega_\delta$},
        \]
        and such that 
        \[
            \limsup_{\epsilon \to 0}E_\epsilon^F [v_\epsilon; \omega_h]\leq \int_{\omega \times \{h\}} \sigma_{\rm geo}^F(x) \, d \mathcal{H}^{d-1}(x).
        \]
    
    \end{theorem}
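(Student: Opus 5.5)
The strategy is to build $v_\eps$ by gluing one-dimensional near-optimal profiles from Lemma \ref{lem:near_optimal_geodesic_diffeomorphed_domain}. These profiles are chosen locally on a fine partition of $\omega$, and the gluing uses a partition of unity in the $x'$-variable. In the flattened domain, \eqref{symmetriewell} gives $A^F = -B^F = \kappa(x_d)\otimes e_d$. Since $A^F=\nabla\alpha^F$, we have $\alpha^F(x)-\alpha^F(x',h) = \int_h^{x_d}\kappa(t)\,dt$, which depends only on $x_d$, and similarly for $\beta^F$. Hence $v_0(x) = \mathrm{sign}(x_d-h)\int_h^{x_d}\kappa(t)\,dt$ (suitably interpreted). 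Its gradient is $\pm\kappa(x_d)\otimes e_d$ and it is continuous across $\{x_d=h\}$.

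\textbf{Single-profile ansatz.} First fix $\lambda>0$ and take $K_\lambda$ from Lemma \ref{lem:near_optimal_geodesic_diffeomorphed_domain}. Choose a mesh $\eta>0$ and cover $\omega$ (up to a set of small $\H^{d-1}$-measure, using $\H^{d-1}(\partial\omega)=0$) by cubes $Q'_j$ of side $\eta$ with centers $x'_j$. Let $\{\theta_j\}$ be a partition of unity subordinate to slightly enlarged cubes, with $|\nabla'\theta_j|\le C/\eta$. For each $j$, set $x_j := (x'_j,h)$ and take the curve $\zeta_j$ given by the lemma at $x_j$. Then define
\[
v_\eps(x) := \sum_j \theta_j(x')\Big( w_j\big(\tfrac{x_d-h}{\eps}\big) \Big) + \text{correction},\qquad \partial_d\, \big[\eps\, w_j(\cdot/\eps)\big] = \zeta_j(\cdot/\eps),
\]
where $w_j$ is a primitive of $\zeta_j$ rescaled so that $\partial_d v_\eps \approx \zeta_j((x_d-h)/\eps)$. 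Outside the layer $|x_d-h|\le \eps K_\lambda$, $\zeta_j$ equals $\pm\kappa(h)$ and not $\pm\kappa(x_d)$. To handle this, I would correct by replacing the profile by $\zeta_j(s) + \mathrm{sign}(s)(\kappa(h+\eps s)-\kappa(h))$, or equivalently add $v_0 - v_0^{\rm frozen}$. This correction is $O(|x_d-h|)$ in gradient and $C^1$ away from $\{x_d=h\}$, and it costs $O(\eps K_\lambda)$ inside the layer. The additive constants of the primitives are matched so that $v_\eps = v_0$ for $|x_d-h|>\eps K_\lambda$; this is possible by the symmetry $\zeta(s)=\kappa$, $\zeta(-s)=-\kappa$, since both sides integrate to the same value. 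This yields $v_\eps\to v_0$ in $H^1$, because the gradients are bounded in the layer by \eqref{eq:uniform_bound_path_length_pushforward_of_geodesic} and the layer has measure $O(\eps)$.

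\textbf{Energy estimate.} The gradient of $v_\eps$ is $\sum_j\theta_j\zeta_j\otimes e_d$ plus terms involving $\nabla'\theta_j$ times $\eps w_j$, which are $O(\eps/\eta)$. On each cube, $\sum_j\theta_j\zeta_j$ is a convex combination of finitely many neighbouring profiles. The main obstacle is that $W_F$ is not convex, so mixing profiles is not controlled a priori. To handle this, I will use the uniform length bound \eqref{boundonthelength} and Lipschitz/continuity properties of $\sqrt{W}$ in $x$ (\ref{H3}) to choose the profiles so that profiles at neighbouring centers are close. The idea is to use the \emph{same} reference profile up to transport: fix a curve $\zeta$ at one base point and adjust it via the continuity of $x\mapsto \kappa(x_d)$ and of $W_F$. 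Concretely, I would try to choose all $\zeta_j$ equal to a single profile on each connected piece. This is justified because on $\omega\times\{h\}$ the wells $\pm\kappa(h)\otimes e_d$ do not depend on $x'$, so only the potential varies with $x'$. A single curve $\zeta$, optimal at $x_j$, is then $C\eta$-almost optimal at any $x$ within distance $\eta$, by \ref{H3} and the length bound. This effectively removes the mixing issue, and the partition of unity only serves to let the chosen profile vary slowly. The Hessian contains $\eps^{-1}\zeta_j'\otimes e_d\otimes e_d$, which is the main term, plus $\nabla'\theta_j$ cross-terms of size $O(1/\eta)$ on a layer of width $\eps K_\lambda$, contributing $O(\eps K_\lambda/\eta^2)\to0$.

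Summing the estimate \eqref{eq:near_optimality_condition_of_pushforward_of_geodesic} over cubes, with the spatial error $C\eta$, gives
\[
\limsup_{\eps\to0}E^F_\eps[v_\eps;\omega_\delta]\le \sum_j \H^{d-1}(Q'_j)\big(\sigma^F_{\rm geo}(x_j)+C\lambda+C\eta\big).
\]
By continuity of $\sigma^F_{\rm geo}$ (from Lemma \ref{lemma:relation_of_pushforward_of_geodesic_distance} and Proposition \ref{comparision-geodesic-distances}), this is a Riemann sum converging to $\int_{\omega\times\{h\}}\sigma^F_{\rm geo}\,d\H^{d-1}$. Finally, a diagonal argument in $\lambda,\eta\to0$ finishes the proof.
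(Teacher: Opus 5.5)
\paragraph{A gap in the overlap regions.} Your architecture matches the paper's: profiles from Lemma \ref{lem:near_optimal_geodesic_diffeomorphed_domain} at grid centers, glued by a partition of unity in $x'$, then a Riemann sum, continuity of $\sigma^F_{\rm geo}$, and a diagonal argument. However, the step you yourself flag as the main obstacle is not resolved.

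Where the partition of unity overlaps, $\partial_d v_\eps\approx\sum_j\theta_j\zeta_j$ is a convex combination of \emph{different} profiles. $W_F$ at such a combination can be of order one across the whole transition layer of thickness $\sim\eps K_\lambda$, so the overlap carries energy of order one (up to $CK_\lambda$) per unit area. With $|\nabla'\theta_j|\le C/\eta$, the overlaps occupy a fraction of $\omega$ that does not shrink as $\eta\to0$, so this error survives in your final estimate.

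Your proposed cure does not work. By \ref{H3} and the length bound, a curve that is near optimal at $x_j$ stays near optimal only within distance $\sim\eta$ of $x_j$. Using one profile on a whole connected piece therefore produces $\int_\omega e_\zeta(x')\,dx'$, where $e_\zeta(x')$ is the energy of that fixed curve at $(x',h)$. Away from $x_j$ this is in general strictly larger than $\sigma^F_{\rm geo}(x',h)$, so the estimate fails. If instead you keep one profile per cube, the mixing returns, and nothing makes near-optimal profiles at neighbouring centers close to each other. Continuity of $\sigma^F_{\rm geo}$ gives no continuous selection of near-geodesics: think of two equally cheap paths passing on opposite sides of a high-energy region of $\R^d$, whose convex combination crosses it.

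\paragraph{The missing idea.} Confine the mixing to a set of vanishing measure instead of trying to control it. The paper takes $\phi_i^\eps\equiv1$ on $(1-\eta_\eps)\omega_i$, so different profiles only interact in thin frames $\tau_i^\eps$, with $\eta_\eps\to0$ and $\eps/\eta_\eps^2\to0$. In the frames it uses only two facts:
\begin{itemize}
\item the crude bound $W_F(x,M)\le C(1+|M|^2)$;
\item the estimate $\|\zeta_i^\eps\|_\infty+\int|(\zeta_i^\eps)'|+\eps\int|(\zeta_i^\eps)'|^2\le C_0(\lambda)$, uniform in $i$. This is where \eqref{eq:uniform_bound_path_length_pushforward_of_geodesic} really enters.
\end{itemize}
Hence the frames contribute $\lesssim\eps^{-1}\eta_\eps\rho_\eps\sim C_\lambda\eta_\eps\to0$. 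The worse derivatives $|\nabla'\phi_i^\eps|\lesssim\eta_\eps^{-1}$ and $|(\nabla')^2\phi_i^\eps|\lesssim\eta_\eps^{-2}$ are harmless, because the primitives are $O(\eps)$ inside the layer and $\eps/\eta_\eps^2\to0$. A fixed overlap fraction $\theta(\lambda)$ with $C_\lambda\theta(\lambda)\le\lambda$ would also work.

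\paragraph{Minor points.}
\begin{itemize}
\item Choosing the additive constants so that $v_\eps=v_0$ on \emph{both} sides of the layer requires $\int\zeta_j\,ds=0$ over the transition region. The boundary symmetry does not give this, since translating the profile only adjusts its component along $\kappa(h)$. The resulting $O(\eps)$ mismatch, with gradient $O(\eps/\eta)$, is nevertheless harmless: it costs $O(\eps/\eta^2)$. The paper removes it in its long interpolation layer, which is needed later for gradient matching in Theorem \ref{thm:general-limsup}.
\item Your correction of the profile by $\mathrm{sign}(x_d-h)(\kappa(x_d)-\kappa(h))$ is a valid and somewhat simpler substitute for the paper's short matching layer.
\item The curves of Lemma \ref{lem:near_optimal_geodesic_diffeomorphed_domain} are already adapted to $\eps$. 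They should be evaluated at $x_d-h$, not at $(x_d-h)/\eps$.
\end{itemize}
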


    We remark that we use the above theorem primarily as a building block for the general $\limsup$ inequality proven in Theorem \ref{thm:general-limsup} below. We point out that the construction also shows that $v_\eps$ can be chosen to match $v_0$ along the top and bottom boundaries of the cylinder $\omega_\delta$.

    \begin{proof}
    For simplicity, we first treat the case $h = 0$, $\delta = 1$, and 
    $$\omega = (-1,1)^{d-1},$$ 
    i.e., we work on the cube $Q:= (-1,1)^d$. The adaptation to the general situation are briefly described in Step 4 below.  
    Fix   $\lambda \in (0,1)$. We construct $\{v_\eps\}_{\eps > 0} \subset H^2(Q; \R^d)$ with $v_\eps \to v_0$ in $L^1(Q; \R^d)$ such that 
    \begin{equation}\label{eqn:limsup+lambda}
    \limsup_{\epsilon \to 0}E_\epsilon^F [v_\epsilon;Q]\leq \int_{\omega \times \{0\}}\sigma_{\rm geo}^F(x)\, d \mathcal{H}^{d-1} (x)  + C\lambda
    \end{equation} 
   for a constant $C > 0$ independent of $\lambda$. A diagonal argument as $\lambda\to 0$ then yields the result.

    \emph{Step 1 (Definition of recovery sequence on a phase-transition layer).}   
    For every $m\in \N$, we divide the $(d-1)$-dimensional cube $\omega$ into smaller open $(d-1)$-dimensional cubes $\omega_i$ with pairwise disjoint interiors, sides of length $2^{-m}$, and centers $x'_i$, i.e., $\bigcup_i \omega_i$ coincides with $\omega$ up to a negligible set.  Associated to the centers $x_i:=(x'_i,0)$,   we take the near optimal geodesics $\zeta_i^\eps \colon  (-1, 1) \to \R^d$ provided by Lemma \ref{lem:near_optimal_geodesic_diffeomorphed_domain}, with $\lambda>0$ as chosen above. We further define 
    \begin{align}\label{eq:choice_of_rhoeps}
        \rho_\eps := 2 K_\lambda \eps.
    \end{align}
    Each $\zeta_i^\eps$ satisfies $\zeta_i^\eps(-s) = - \kappa(0)$ and  $\zeta_i^\eps(s) = \kappa(0)$ for $\rho_\eps/2 \le s  <1$. We naturally extend $\zeta_{i}^\eps$ to be constant left and right of $(-1,1)$. Notice also that, by Lemma \ref{lemma:diffeomorphed_quantities_are_comparable}, \eqref{eq:near_optimality_condition_of_pushforward_of_geodesic}, and \eqref{eq:uniform_bound_path_length_pushforward_of_geodesic}, 
    \begin{align}\label{eq:uniform_boundedness_of_geodesics_and_path_length}
        \|\zeta_i^\eps\|_\infty + \int_{-1}^1|(\zeta_i^\eps)'| \, ds + \eps\int_{-1}^1 |(\zeta_i^\eps)'|^2   \, ds \le  C_0
    \end{align} 
    for a sufficiently  large constant $C_0 = C_0(\lambda) > 0$, where we used the fundamental theorem of calculus to control the first term together with the fact that that $\kappa$ is bounded. Now, we choose any sequence $\lbrace \eta_\eps \rbrace_\eps \subset  (0, 1/4)$ such that 
   \begin{align}\label{eta-choice}
        \frac{\eps}{\eta_\eps^2} \to 0 \qquad \text{ and } \qquad  \eta_\eps \to 0. 
    \end{align}
    Let $\{\phi_i^\eps\}_i$  be a partition of unity subordinate to $\{(1+\eta_\epsilon)\omega_i\}_i$ with  
    \begin{equation}
        \label{eq:rec1}
        |\nabla'\phi_i^\eps(x')| \leq \frac{C}{\eta_\epsilon} \quad \text{ and } \quad |(\nabla')^2\phi_i^\eps(x')| \leq \frac{C}{\eta_\epsilon^2} \quad \text{ for all }x'\in \omega_i,
    \end{equation}
    and 
    \begin{equation}
    \label{eq:rec2}
        \phi_i^\eps(x') = 1  \quad \text{ on }(1-\eta_\epsilon)\omega_i.
    \end{equation}
    Here, $\nabla'$ denotes the part of the gradient in the first $d-1$ directions. Denote the layer surrounding the boundary  $\partial \omega_i$  by 
    \[
       \tau_i^\eps  := (1+\eta_\epsilon)\omega_i\setminus (1-\eta_\epsilon)\omega_i.
    \] 
    Next, we define the function 
    \begin{equation}\nonumber
        v_\epsilon(x',x_d) : = \sum_i \left(\int_{0}^{x_d}\zeta_i^\eps(t)\, dt\right) \phi_i^\eps(x') \quad \text{ for }x \in T_\epsilon,
    \end{equation}
    where $T_\epsilon:=\omega\times (-\rho_\epsilon, \rho_\epsilon)$ is the \emph{phase-transition layer}.

   \begin{figure}
       \centering
       \includesvg[width=0.7\linewidth]{images/discretization_of_interface_v4}
       \caption{The discretization and the layers used in Theorem \ref{thm:first-limsup}.  }
       \label{fig:limsup_pic}
   \end{figure}

   In the next steps, we will interpolate the functions $v_\eps$ to the map $\alpha^F(x_d)-\alpha^F(0)$ (respectively $\beta^F(x_d)-\beta^F(0)$) above (respectively below) the phase transition layer in a low-energy way by completing a first and second matching as in \cite{contifonsecaleoni2002}. In particular, we will show that the extension of $v_\eps$ (without renaming), defined on the cube $Q$, fulfills  
    \begin{equation}\label{limsup_equation}
        \limsup_{\epsilon\to 0} E_\epsilon^F[v_\epsilon;T_\epsilon]\leq \sum_{i}\mathcal{H}^{d-1}(\omega_i)\left(\sigma_{\rm geo}^F(x_i) +  C_\lambda  \varsigma( 2^{-m} ) + C\lambda \right)
    \end{equation}
    for some constant $C_\lambda>0$ depending on $\lambda$, where $\varsigma \colon [0,\infty) \to [0,\infty)$ is a continuous nondecreasing function with $\varsigma(0)=0$,  and that 
    \begin{equation}\label{eqn:vanishEnergy}
        \limsup_{\epsilon\to 0} E^F_\epsilon [v_\epsilon;Q\setminus T_\epsilon] = 0 .
    \end{equation}
    %Since $\sigma_{\rm geo}^F$ is continuous, 
   Due to Lemma \ref{lemma:relation_of_pushforward_of_geodesic_distance}, Definition \ref{def:diffeomorphed_surface_density}, and the continuity of $\sigma_{\rm geo}$, which follows from Lemma \ref{geodesic-distance-gradient-estimates} proven in the next section, that $\sigma_{\rm geo}^F$ is continuous.  
    Consequently, by combining \eqref{limsup_equation} and \eqref{eqn:vanishEnergy}, and by sending $m\to \infty$,  a diagonalization argument concludes \eqref{eqn:limsup+lambda}.
    
    \textit{Step 2 (Estimate \eqref{limsup_equation} on $T_\epsilon$).}
    We directly compute the derivatives of $u_\epsilon$ as
    \begin{equation}\label{eqn:firstDerivatves}
        \nabla v_\epsilon(x', x_d) = \sum_i \left[  \left(\int_{0}^{x_d}\zeta_i^\eps(t)\, dt\right) \otimes \nabla'\phi_i^\eps(x') \ \Big|\ \zeta_i^\eps(x_d) \phi_i^\eps(x') \right]
    \end{equation}
    and 
    \begin{equation}\label{eqn:secondDerivatives}
        \nabla^2 v_\epsilon(x', x_d) = \sum_i \left[
        \begin{array}{cc}
            \left(\int_{0}^{x_d}\zeta_i^\eps(t)\, dt\right) \otimes (\nabla')^2\phi_i^\eps(x')      &   [\zeta_i^\eps(x_d)\otimes \nabla'\phi_i^\eps(x')]^T\\
            \zeta_i^\eps(x_d)\otimes \nabla'\phi_i^\eps(x') &  (\zeta_i^\eps)'(x_d) \phi_i^\eps(x')
        \end{array}
        \right]
    \end{equation}
    for every $x\in T_\eps$.

    \textit{Substep 2.1 (First-order estimates).}
    To estimate the elastic energy, we subdivide $T_{\eps}$ into the regions $(1-\eta_\eps)\omega_i$ and $ \tau_i^\eps $, and obtain 
    \begin{align}
         \int_{T_\eps} \frac{1}{\eps} W_F(x, \nabla v_\epsilon(x)) \, dx &= \frac 1 \epsilon \sum_i \int_{\omega_i \times (-\rho_\epsilon, \rho_\epsilon)} W_F(x, \nabla v_\epsilon(x)) \, dx \nonumber \\ 
        &\leq \frac 1 \epsilon \sum_i \Bigg( \int_{ \tau_i^\eps \times (- \rho_\epsilon, \rho_\epsilon)} W_F(x, \nabla v_\epsilon(x)) \, dx  + \int_{(1-\eta_\epsilon)\omega_i \times (- \rho_\epsilon, \rho_\epsilon)} W_F(x, \nabla v_\epsilon(x)) \, dx \Bigg) \nonumber \\
        &\label{eq:rec3}\leq \frac 1 \epsilon \sum_i \Bigg(  \int_{ \tau_i^\eps \times (- \rho_\epsilon, \rho_\epsilon)} W_F(x, \nabla v_\epsilon(x)) \, dx   + \int_{\omega_i \times (- \rho_\epsilon, \rho_\epsilon)} W_F \left(x, \zeta_i^\eps (x_d) \otimes e_d \right)\, dx \Bigg), %\label{eqn:breakFirstOrder}
    \end{align}
    where in the latter inequality we have used \eqref{eq:rec2} and \eqref{eqn:firstDerivatves}.
    To bound the first term on the right-hand side of \eqref{eq:rec3}, we use \ref{H2} and \eqref{eqn:firstDerivatves} to find
    \begin{align}
        \frac 1 \epsilon \sum_i &\int_{ \tau_i^\eps \times (- \rho_\epsilon , \rho_\epsilon)} W_F(x, \nabla v_\epsilon(x)) \, dx \leq  \frac 1 \epsilon \sum_i \int_{ \tau_i^\eps \times (-\rho_\epsilon, \rho_\epsilon)} C(1 + |\nabla v_\epsilon(x)|^2) \, dx \nonumber \\
        \leq & \, \frac{C}{\eps}  \left(\eta_\epsilon\rho_\epsilon +  \sum_i  \int_{ \tau_i^\eps \times (-\rho_\epsilon, \rho_\epsilon)} \left(\left| \left(\int_{0}^{x_d} \zeta_i^\eps(t)\, dt\right) \otimes \nabla'\phi_i^\eps(x')\right|^2 + |\zeta_i^\eps (x_d) \phi_i^\eps(x')|^2 \right) dx \right).\label{eqn:forSum}
    \end{align}
    By applying the Poincaré inequality together  with \eqref{eq:uniform_boundedness_of_geodesics_and_path_length} and \eqref{eq:rec1}, we infer for all $x \in T_\eps$ that
    \begin{align*}
        \left| \left(\int_{0}^{x_d}\zeta_i^\eps(t)\, dt\right) \otimes \nabla'\phi_i^\eps(x')\right| \leq \frac{C}{\eta_\eps}\int_{-\rho_\eps}^{\rho_\eps} |\zeta_i^\eps| \,  dt \leq \frac{C\rho_\eps}{\eta_\eps}\left( |\kappa(0)|+ \int_{-\rho_\eps}^{\rho_\eps} |(\zeta_i^\eps)'| \,  dt \right) \leq  \frac{C\rho_\eps}{\eta_\eps},
    \end{align*}
    where we recall that $\zeta_i^\eps( \pm \rho_\eps) = \pm \kappa(0)$.
    Using the last estimate, we obtain  
    \[
        \int_{ \tau_i^\eps \times (-\rho_\eps, \rho_\epsilon)} \left| \left(\int_{0}^{x_d}\zeta_i^\eps(t)\, dt\right) \otimes \nabla'\phi_i^\eps(x')\right|^2  dx \leq \frac{C\rho_\epsilon^2}{\eta_\epsilon^2} \mathcal{L}^d( \tau_i^\eps \times (-\rho_\epsilon, \rho_\epsilon)) \leq C\frac{\rho_\eps^3}{\eta_\eps 2^{m(d-1)}}.
    \]
    Applying again \eqref{eq:uniform_boundedness_of_geodesics_and_path_length}, we also infer
    \[
        \int_{ \tau_i^\eps \times (-\rho_\eps, \rho_\epsilon)} |\phi_i^\eps(x')\zeta_i(x_d)|^2 \, dx \leq C \mathcal{L}^d ( \tau_i^\eps \times (-\rho_\eps, \rho_\eps)) \leq  \frac{C\eta_\eps \rho_\eps}{ 2^{m(d-1)}}.
    \]
    Summing these inequalities over $i$ in \eqref{eqn:forSum} and using that
      $\rho_\epsilon =  2K_\lambda  \epsilon$ (see \eqref{eq:choice_of_rhoeps}), we conclude
    \begin{equation}
    \label{eq:rec4}
        \frac 1 \epsilon \sum_i \int_{ \tau_i^\eps \times (-\rho_\epsilon , \rho_\epsilon)} W_F(x, \nabla v_\epsilon(x)) \, dx \leq   C_\lambda  \left(\eta_\epsilon + \frac{\epsilon^2}{\eta_\epsilon}\right)
    \end{equation}
    for some constant $ C_\lambda  >0$ independent of $m$. Combining \eqref{eq:rec4} with \eqref{eq:rec3}, we find
    \begin{equation}\label{eqn:firstOrderContrie}
     \int_{T_\eps} \frac{1}{\eps} W_F(x, \nabla v_\epsilon(x)) \, dx \leq C_\lambda \left(\eta_\epsilon + \frac{\epsilon^2}{\eta_\epsilon}\right) + \sum_i \int_{\omega_i \times (-\rho_\epsilon, \rho_\epsilon)} \frac{1}{\eps} W_F(x, \zeta_i^\eps (x_d) \otimes e_d) \, dx.
    \end{equation}
    \textit{Substep 2.2 (Second-order estimates).}
    Similarly, we continue with the second-order term in $E_\eps^F [v_\eps;T_\eps]$. By \eqref{eqn:secondDerivatives} and Lemma \ref{lemma:diffeomorphed_quantities_are_comparable} we have  
    \begin{equation}\label{eqn:secondOrderExpandies}
         \int_{\omega\times (-\rho_\eps, \rho_\epsilon)} \eps |\nabla^2 v_\epsilon|^2_F \, dx \leq \eps \sum_i \left( C\int_{ \tau_i^\eps \times (-\rho_\eps, \rho_\epsilon)} |\nabla^2 v_\epsilon|^2 \, dx + \int_{\omega_i\times (-\rho_\eps, \rho_\epsilon)} |(\zeta_i^\eps)'(x_d) \otimes e_d \otimes e_d  |^2_{F,x} \, dx \right).
    \end{equation}
    Again employing \eqref{eqn:secondDerivatives}, we infer the bound 
    \begin{equation}\nonumber
    \begin{aligned}
        \sum_i \int_{ \tau_i^\eps\times (-\rho_\eps, \rho_\epsilon)} |\nabla^2 v_\epsilon|^2 \, dx & \leq  C \sum_i   \int_{ \tau_i^\eps\times (-\rho_\eps, \rho_\epsilon)} \left|\left(\int_{0}^{x_d}\zeta_i^\eps(t)\, dt\right)\otimes (\nabla')^2\phi_i^\eps(x') \right|^2  dx  \\
        & \hspace{3em} + C  \sum_i \int_{ \tau_i^\eps\times (-\rho_\eps, \rho_\epsilon)}\left(|\zeta_i^\eps(x_d) \otimes \nabla'\phi_i^\eps(x') |^2 + |(\zeta_i^\eps)'(x_d) \phi_i^\eps(x')|^2 \right) dx.
    \end{aligned}
    \end{equation}
    As before, we estimate each contribution separately, and obtain by  \eqref{eq:uniform_boundedness_of_geodesics_and_path_length} and \eqref{eq:rec1}
    \begin{gather*}
        \int_{ \tau_i^\eps\times (-\rho_\eps, \rho_\epsilon)} \left|\left(\int_{0}^{x_d}\zeta_i^\eps(t)\, dt\right)\otimes (\nabla')^2\phi_i^\eps(x') \right|^2  dx \leq \frac{C\rho_\epsilon^2}{\eta_\epsilon^4} \mathcal{L}^d ( \tau_i^\eps \times (-\rho_\eps, \rho_\epsilon))\leq C\frac{\rho_\eps^3}{\eta_\eps^3 2^{m(d-1)}    },\\
        \int_{ \tau_i^\eps\times (-\rho_\eps, \rho_\epsilon)} |\zeta_i^\eps(x_d) \otimes \nabla'\phi_i^\eps(x') |^2 \, dx \leq C\frac{1}{\eta_\epsilon^2} \mathcal{L}^d  ( \tau_i^\eps \times (-\rho_\eps, \rho_\epsilon)) \leq C \frac{\rho_\eps}{\eta_\eps  2^{m(d-1)}},
    \end{gather*}
    and lastly  \[
         \int_{ \tau_i^\eps\times (-\rho_\eps, \rho_\epsilon)} |(\zeta_i^\eps)'(x_d) \phi_i(x')|^2 \, dx   \le   C \frac{ \eta_\eps   }{  2^{m(d-1)} \eps}.
    \] 
     Gathering the above inequalities and using  \eqref{eq:choice_of_rhoeps}, we arrive at 
    \begin{equation}\label{eqn:secondOrderVanishers}
        \epsilon \sum_i \int_{ \tau_i^\eps \times (-\rho_\eps, \rho_\epsilon)} |\nabla^2 v_\epsilon|^2 \, dx \leq C_\lambda  \left(\frac{\epsilon^4}{\eta_\epsilon^3} + \frac{\epsilon^2}{\eta_\epsilon} + \eta_\epsilon \right),
    \end{equation}
    where $C_\lambda  >0$ is independent of $m$.  In particular,  we use \eqref{eta-choice}
     and
    we combine \eqref{eqn:firstOrderContrie}, \eqref{eqn:secondOrderExpandies}, and \eqref{eqn:secondOrderVanishers} to obtain  
    \begin{equation}\label{eqn:almostTepsest}
    \limsup_{\epsilon \to 0} E_\epsilon[v_\epsilon; T_\eps] \leq 
         \limsup_{\epsilon \to 0} \sum_i \left(\int_{\omega_i \times (-\rho_\eps, \rho_\epsilon)} \frac{1}{\epsilon}W_F (x, \zeta_i^\eps (x_d) \otimes e_d ) + \epsilon |(\zeta_i^\eps)'(x_d) \otimes e_d \otimes e_d |_{F, x}^2 \, dx  \right).
    \end{equation}
    \textit{Step 2.3 (Localisation in $x$).}
    By \eqref{eq:uniform_boundedness_of_geodesics_and_path_length}, we note that  for $x \in \omega_i \times (-\rho_\eps, \rho_\eps)$
    \begin{align}\label{eq:Lipschitz_cond_1}
        |W_F(x, \zeta_i^\eps (x_d) \otimes e_d) -  W (x_i, \zeta_i^\eps (x_d) \otimes e_d) | \leq \tilde \varsigma(|x - x_i|) \leq \tilde \varsigma(2^{-m}),
    \end{align}
    where $\tilde \varsigma$ is the modulus of continuity for $W \colon \overline {\Omega\times B(0,C_0)}\to \R$, and, recalling Definition \ref{def:diffeomorphed_quantities},    
    \begin{align}\label{eq:Lipschitz_cond_2}
        ||\mathfrak T|_{F,x} - |\mathfrak T|_{ F, x_i}| \leq  C|(\nabla F\circ F^{-1})(x) - (\nabla F \circ F^{-1}) (x_i)|  |\mathfrak T| \leq \frac{C}{2^m}|\mathfrak T|
    \end{align}
        
    for all $\mathfrak T \in \R^{d \times d \times d}$. Here, we used that $\nabla F$ is Lipschitz on $\overline \Omega$. Inserting these latter estimates into \eqref{eqn:almostTepsest}, using \eqref{eq:uniform_boundedness_of_geodesics_and_path_length} with Lemma \ref{lemma:diffeomorphed_quantities_are_comparable}, and \eqref{eq:choice_of_rhoeps}, we can derive, with $\varsigma := \tilde \varsigma + |\cdot|$,
    \begin{align}\label{eqn:localLimsup}
        &\left(\int_{\omega_i \times (-\rho_\eps, \rho_\epsilon)} \frac{1}{\epsilon}W_F(x, \zeta_i^\eps (x_d) \otimes e_d ) + \epsilon |(\zeta_i^\eps)'(x_d) \otimes e_d \otimes e_d|^2_{ F, x} \, dx  \right) \nonumber \\
        & \leq  \left(\int_{\omega_i \times (-\rho_\eps, \rho_\epsilon)} \frac{1}{\epsilon}W_F(x_i, \zeta_i^\eps (x_d) \otimes e_d ) + \epsilon |(\zeta_i^\eps)'(x_d) \otimes e_d \otimes e_d|^2_{ F, x_i} \, dx  \right) + C_\lambda \mathcal{H}^{d-1}(\omega_i)\varsigma(2^{-m})
    \end{align}    
    Plugging \eqref{eqn:localLimsup} into \eqref{eqn:almostTepsest}, we apply Fubini's theorem and use \eqref{eq:near_optimality_condition_of_pushforward_of_geodesic} to conclude \eqref{limsup_equation}.

    \textit{Step 3 (Definition of $v_\eps$ and energy estimate  above $T_\epsilon$).}
    In this step, we show that we can define a suitable extension above the surface $\omega \times \{\rho_\eps\}$ which asymptotically carries no energy. First, we prove
    \begin{equation}\label{eqn:traceEnergy}
        \int_{\omega \times \{\rho_\epsilon\}} \left(\frac{1}{\epsilon}W_F(x,\nabla v_\epsilon) + \epsilon |\nabla^2 v_\epsilon|^2_F + |\nabla v_\epsilon - A^F|^2 +\frac{1}{\epsilon}|v_\epsilon - (\alpha^F - \alpha^F(0))|^2\right) d \mathcal{H}^{d-1} \to 0.
    \end{equation}
    By definition, the function $v_\epsilon$ has a well-defined trace on the upper boundary of $T_\epsilon$, which, for convenience, we denote by 
    \[
        v^\rho_\epsilon(x') : = v_\epsilon(x',\rho_\epsilon).
    \]
    By \eqref{eq:uniform_boundedness_of_geodesics_and_path_length}, \eqref{eq:rec1}, and \eqref{eqn:firstDerivatves}--\eqref{eqn:secondDerivatives},  we observe
    \begin{align}\label{eq:uniform_estimate_on_trace}
        \|v^\rho_\eps\|_\infty + {\eta_\eps}\|\nabla' v^\rho_\eps \|_\infty + \eta_\eps^2\|(\nabla')^2 v^\rho_\eps \|_\infty \leq C\rho_\eps.
    \end{align}
    We note that, because $\zeta_i^\eps(\rho_\eps)= \kappa(0)$ for all $i$, \eqref{eqn:firstDerivatves} simplifies to 
    \begin{equation}\label{eqn:firstDeriveTrace}
        \nabla v_\eps (x)  = \sum_i\left[\left(\int_0^{\rho_\eps} \zeta_i^\eps(t) \, dt \right)\otimes \nabla'\phi_i^\eps(x') \,\Big|\, \zeta_i^\eps(\rho_\eps)\phi_i^\eps(x') \right] 
        =[\nabla' v_\eps^\rho(x') \,|\, \kappa (0)] \quad \text{ for }x\in \omega\times \{\rho_\eps\} .
    \end{equation}
    By \eqref{eq:uniform_estimate_on_trace}, recalling that $A^F  (x) = \kappa(x_d)  \otimes e_d$ (see \eqref{symmetriewell}), \eqref{eq:choice_of_rhoeps}, \eqref{eta-choice}, and \eqref{eqn:firstDeriveTrace}, we derive
    \begin{equation}\nonumber
    \begin{aligned}
        \int_{\omega \times \{\rho_\epsilon\}}  \frac{1}{\epsilon}|\nabla v_\epsilon - A^F|^2\, dx \leq &\, C \int_{\omega \times \{\rho_\epsilon\}}\frac{1}{\eps}\left(|\nabla' v_\eps^\rho|^2 + |\kappa(\rho_\eps) - \kappa(0)|^2\right)\, d\mathcal{H}^{d-1}(x) \\
    \leq & \, C \left(  \frac{1}{\eps}\frac{\rho_\eps^2}{\eta_\eps^2} \sum_i\mathcal{H}^{d-1}(\tau_i^\eps \times \{\rho_\epsilon\}) \right) + C\frac{\rho_\eps^2}{\eps} 
    \leq  C_\lambda \left( \frac{\eps}{\eta_\eps} + \eps\right) \to 0,
    \end{aligned}
    \end{equation}
    where we have used the fact that $|\kappa(\rho_\eps) - \kappa (0)|\leq C\rho_\eps$ by the regularity of $A$. By Lemma  \ref{lemma:diffeomorphed_quantities_are_comparable}, $W_F$ satisfies \ref{H2}, and thus we infer from the last estimate    
    \[
        \frac{1}{\eps}\int_{\omega \times \{\rho_\epsilon\}} W_F(x,\nabla v_\epsilon) \, d\mathcal{H}^{d-1}  \to  0.
    \]
    Recall that    $\zeta_i^\eps(s) = \kappa(0)$ for all $s \ge \rho_\eps/2$, see below  \eqref{eq:choice_of_rhoeps}. Therefore, by \eqref{eqn:secondDerivatives} we have for all $x \in \omega \times \{ \rho_\eps \}$
    \begin{equation}
        \nabla^2 v_\epsilon(x) = \sum_i \left[
        \begin{array}{cc}
            \left(\int_{0}^{\rho_\eps}\zeta_i^\eps(t)\, dt\right) \otimes (\nabla')^2\phi_i^\eps(x')      &   [\kappa(0) \otimes \nabla'\phi_i^\eps(x')]^T\\
            \kappa(0) \otimes \nabla'\phi_i^\eps(x') &  0
        \end{array}
        \right]. \nonumber
    \end{equation}
    In particular, by Lemma \ref{lemma:diffeomorphed_quantities_are_comparable}, \eqref{eq:choice_of_rhoeps},  \eqref{eq:uniform_boundedness_of_geodesics_and_path_length}, \eqref{eta-choice}, and \eqref{eq:rec1}, it follows that 
    \[
        \int_{\omega \times \{\rho_\epsilon\}} \epsilon |\nabla^2 v_\epsilon|^2_F \, d\mathcal{H}^{d-1} \leq  C_\lambda  \left(\frac{\eps^3}{\eta_\eps^4} + \frac{\eps}{\eta_\eps^2} \right)  \to  0.
    \]
    Since $|v_\eps|\leq C\rho_\eps$ and $|\alpha(\rho_\eps) - \alpha(0)|\leq C\rho_\eps$,  we also have 
    \[
        \int_{\omega \times \{\rho_\epsilon\}} \frac{1}{\epsilon}|v_\epsilon - (\alpha - \alpha(0))|^2 \, d\mathcal{H}^{d-1} \to 0
    \]
    holds. This concludes the proof of \eqref{eqn:traceEnergy}.

    \textit{Substep 3.1 (First matching).} On the \emph{short interpolation layer} 
    $$I_\epsilon^{\rm short}:= \omega \times (\rho_\epsilon, \rho_\epsilon + \epsilon)$$ 
    we define 
    \begin{equation}\label{def:Ishort}
        v_\epsilon(x) : = \psi_\epsilon(x_d)\big(v_\epsilon^{\rho}(x') + \kappa(0)(x_d-\rho_\epsilon)\big)+ (1-\psi_\epsilon(x_d))\left(\alpha^F(x) - \alpha^F(\rho_\epsilon e_d)  + v^\rho_\epsilon(x')\right),
    \end{equation}
    for $x\in I_{\epsilon}^{\rm short}$,
    where $\psi_\epsilon$ is a smooth function transitioning from $1$ to $0$ on the interval $(\rho_\epsilon, \rho_\epsilon + \epsilon)$ with
    \begin{align}\label{psiprime}
        \psi'_\eps = 0 \quad \text{ in } (\rho_\epsilon, \rho_\epsilon + \epsilon)^c, \quad \text{and}\quad \quad  \epsilon |\psi_\epsilon'| + \epsilon^2 |\psi_\epsilon''|\leq C <\infty.
    \end{align}
    By \eqref{eqn:firstDeriveTrace}, we have that the trace of $\nabla v_\eps$ on $\omega\times \{\rho_\eps\}$ is $[\nabla' v_\eps^\rho(x) \,|\, \kappa(0)]$, so that definition \eqref{def:Ishort} ensures $v_\eps\in H^2(T_\eps\cup I_\eps^{\rm short})$.

     To show that the energy on $I_\eps^{\rm short}$ vanishes as $\epsilon\to 0$, we first compute the gradient and Hessian of $v_\eps$ in this region. Computing the derivatives of $v_\eps$, we find
    \begin{equation}\label{eqn:uepsIGrad}
    \begin{aligned}
        \nabla v_\epsilon (x) &= \psi_\epsilon(x_d)A^F(0)+ (1-\psi_\epsilon(x_d))A^F(x) + \nabla v^\rho_\epsilon(x')  \\
        & \hspace{3em} +  \psi_\epsilon'(x_d) \Big( \kappa(0)(x_d-\rho_\epsilon) - (\alpha^F(x) - \alpha^F(\rho_\epsilon e_d))\Big)\otimes e_d,
    \end{aligned}
    \end{equation}
    and
    \begin{equation}\nonumber
    \begin{aligned}
        \nabla^2 v_\epsilon (x) &= (1-\psi_\epsilon(x_d)) [(A^F)'(x)\otimes e_d ] + \nabla^2 v^\rho_\epsilon(x')  \\
        &\hspace{3em} +  2\psi_\epsilon'(x_d) \left[( A^F(0) - A^F(x))\otimes e_d\right] \\
        &\hspace{3em} +  \psi_\epsilon''(x_d) \left[\Big( \kappa(0)(x_d-\rho_\epsilon) - (\alpha^F(x) - \alpha^F(\rho_\epsilon e_d))\Big)\otimes e_d\right]\otimes e_d
    \end{aligned}
    \end{equation}
    for every $x\in I^{\textrm{short}}_\eps$.
    With this, we now show that 
    \begin{equation}\label{eqn:IshortVanish}
    \begin{aligned}
        \lim_{\eps\to 0}\int_{I_\epsilon^{\rm short}} \left(\frac{1}{\epsilon}W_F(x,\nabla v_\eps) + \epsilon |\nabla^2 v_\eps|_F^2  + |\nabla v_\eps - A^F|^2 +|v_\eps - (\alpha^F - \alpha^F(0))|^2\right) dx =0.
    \end{aligned}
    \end{equation}
    Indeed, as $W_F$ satisfies \ref{H2} (see  Lemma \ref{lemma:diffeomorphed_quantities_are_comparable}),  by \eqref{psiprime},   \eqref{eqn:uepsIGrad},  Poincar\'e's inequality with trace $0$ on $\omega\times \{\rho_\eps\}$, the fact that $I_\epsilon^{\rm short}$ has width $\eps$,  and also recalling  $A^F(x)   = \kappa  (x_d)  \otimes e_d$ from \eqref{symmetriewell},  we have 
    \begin{equation}\nonumber
    \begin{aligned}
    & \int_{I_\epsilon^{\rm short}} \frac{1}{\epsilon}W_F(x,\nabla v_\eps) \, dx\\
    &\leq \frac{C}{\epsilon} \int_{I_\epsilon^{\rm short}}\left(| A^F(0) -A^F(x)|^2 + |\nabla v^\rho_\epsilon (x')|^2 + \frac{1}{\epsilon^2}|( \kappa (0)(x_d-\rho_\epsilon) - (\alpha^F(x) - \alpha^F(\rho_\epsilon e_d))|^2\right) dx \\
    &\leq \frac{C}{\epsilon} \int_{I_\epsilon^{\rm short}}\left(|\nabla v_\eps^{\rho}(x')|^2 + |A^F(0) -A^F(x)|^2\right) dx \\
    &\leq C \int_{\omega\times\{\rho_\epsilon\}} W_F(x,\nabla v_\eps )  \, d \mathcal{H}^{d-1}+C\epsilon^2, 
    \end{aligned}
    \end{equation}
    where in the last inequality we have used that $|M' |^2\leq \min\{|M-A^F_x|^2,|M - B^F_x|^2\} \leq  CW_F(x,M)$ for all $x\in Q$ and $M\in \R^{n\times n}$ (with $M'$ being the matrix arising from removing the last column of $M$), $\nabla v_\eps^{\rho} = [\nabla' v_\eps^{\rho},0]$, and that $|A^F(x) -A^F(0)|\leq C|x_d|$ by the regularity of $\alpha$.  
    By (\ref{eqn:traceEnergy}), we conclude that 
    $$\int_{I_\epsilon^{\rm short}} \frac{1}{\epsilon}W_F(x,\nabla v_\eps) \, dx \to 0. $$
    Analogously, using again Lemma \ref{lemma:diffeomorphed_quantities_are_comparable}, one may show that
    \begin{align*}
        &  \eps \int_{I_\eps^{\rm short}} |\nabla^2 v_\eps|_F^2 \, dx  \\
        &  \leq  C\eps \int_{I_\eps^{\rm short}} \left(1 + |\nabla^2 v_\eps^\rho(x')|^2 + \frac 1{\eps^2} |A^F(0) - A^F(x)|^2 + \frac{1}{\eps^4}|\kappa(0)(x_d - \rho_\eps) -  ( \alpha^F(x) - \alpha^F( \rho_\epsilon e_d ))|^2\right) dx  \\
        &  \leq C\left(\eps + \eps \int_{\omega\times\{\rho_\epsilon\}}| (\nabla')^2 v_\eps^\rho(x')|^2 \, d\mathcal{H}^{d-1} \right) \to 0,
    \end{align*}
    where we have also used \eqref{eq:uniform_estimate_on_trace}. The rest of (\ref{eqn:IshortVanish}) is verified in a similar fashion. 
    
    \textit{Substep 3.2 (Second matching).} On the \emph{long interpolation layer} $I_\epsilon^{\rm long}:= \omega \times ( \rho_\epsilon + \epsilon, 1  )$ we define 
    \begin{equation}\nonumber
        v_\eps(x) : =  \alpha^F (x) - \alpha^F (\rho_\epsilon e_d) + \psi(x_d)v_\eps^{\rho}(x') \quad \text{ for }x\in I_{\epsilon}^{\rm long},
    \end{equation}
    where $\psi$ is a smooth function transitioning from $1$ to $0$ on the interval $(\rho_\epsilon + \epsilon, 1 )$ with 
    $$ 
        |\psi'| + |\psi''|\leq C <\infty.
    $$ 
    Similar computations as in Substep 3.1 yield 
    \begin{equation}\label{eqn:IlongVanish}
    \begin{aligned}
    \lim_{\eps \to 0} \int_{I_\epsilon^{\rm long}} \left(\frac{1}{\epsilon}W_F(x,\nabla v_\eps) + \epsilon |\nabla^2 v_\eps|^2_{F}  + |\nabla v_\eps -  A^F |^2 +|v_\eps - (\alpha^F - \alpha^F(0))|^2\right)\, dx = 0.
    \end{aligned}
    \end{equation}
    To see this, as before, by applying \eqref{eta-choice} and  \eqref{eq:uniform_estimate_on_trace} we have
    \begin{equation}\nonumber
    \begin{aligned}
        & \int_{I_\epsilon^{\rm long}} \frac{1}{\epsilon}W_F(x,\nabla v_\eps)\, dx \leq C \int_{\omega\times\{\rho_\epsilon\}} \frac{1}{\epsilon}\left(|\nabla' v_\eps^\rho|^2 + |v^\rho_\epsilon|^2 \right) \, d \mathcal{H}^{d-1} \leq C \left( \frac{\eps}{\eta_\eps^2} + \eps \right)\to 0.
    \end{aligned}
    \end{equation}
    and
    \begin{align*}
        \int_{I_\epsilon^{\rm long}} \eps |\nabla^2 v_\eps|_F^2 \, dx \leq C\left( \eps + \frac{\eps^3}{\eta_\eps^4} \right) \to 0.
    \end{align*}
    The convergence of the remaining terms in (\ref{eqn:IlongVanish}) is again checked in a similar fashion. This concludes the construction of the recovery sequence on the upper part $(-1,1)^{d-1} \times (-\rho_\eps,1)$. Extending $v_\eps$ to $(-1,1)^{d-1} \times (-1,1)$, along with the corresponding energy estimates, is done in an analogous way. Eventually, combining   \eqref{eqn:IshortVanish} and \eqref{eqn:IlongVanish} shows \eqref{eqn:vanishEnergy}.

    \textit{Step 4 (Construction for arbitrary $\omega$).} We briefly indicate the adaptations in the case that $\omega \subset \R^{d-1}$ with $\H^{d-1}(\partial \omega) = 0$, $h =0$, and $\delta > 0$ are arbitrary. We cover $\omega$ with   $(d-1)$-dimensional cubes $\omega_i$ of size $1/2^m$ with pairwise disjoint interiors and $\omega_i \cap \omega \neq \emptyset$. Instead of choosing the midpoints of the cubes, we choose arbitrary $x_i \in \omega_i \cap \omega$ and repeat the steps from Step 1 to Step 3.2 to define $v_\eps$ such that 
    \begin{align}\label{6.36}
        \limsup_{\epsilon\to 0} E_\epsilon^F[v_\epsilon; \omega_\delta]\leq \sum_{i}\mathcal{H}^{d-1}(\omega_i)\left(\sigma_{\rm geo}^F(x_i) + \varsigma(2^{-m}) + C\lambda \right).
    \end{align}
    The only difference is that the long transition in Substep 3.2 in performed an interval of length $\delta$. By continuity, we then obtain 
    \[
        \sum_{i}\mathcal{H}^{d-1}(\omega_i)\sigma_{\rm geo}^F(x_i) \xrightarrow{m \to \infty} \int_{\overline{\omega}} \sigma_{\rm geo}^F(x) \, d\H^{d-1}(x) = \int_{\omega} \sigma_{\rm geo}^F(x) \, d\H^{d-1}(x),
    \]
    where in the last step we used $\mathcal{H}^{d-1}(\partial \omega) = 0$. If $h \neq 0$, we repeat the exact construction at height $h$.
    \end{proof}

    Eventually, we come to the main result of this section, namely, recovery sequences on general domains. Once this is achieved, the proof of Theorem \ref{thm:main} is concluded.
  \begin{theorem}\label{thm:general-limsup}
        Let $\Omega \subset \R^d$ be a simply connected Lipschitz domain, and $u \in X(\Omega)$. Let $W$ satisfy \ref{H1}--\ref{H4}.  Then, there exists  $\lbrace u_\eps\rbrace_\eps\subset H^2(\Omega; \R^d)$ such that 
        \[
            u_\eps \to u
        \]
        in $L^1(\Omega; \R^d)$, and 
        \[
            \limsup_{\epsilon \to 0}E_\epsilon (u_\epsilon) \leq \int_{ J_{\nabla u}}\sigma_{\rm geo}(x')\, d \mathcal{H}^{d-1} (x'). 
        \]
    \end{theorem}

    \begin{proof}
    In view of Lemma \ref{lem:equivalence_of_gamma_limits_under_diffeomorphism}, it suffices to consider $v = u \circ F^{-1} $ with $\int_{J_v}\sigma_{\rm geo}^F \, d\mathcal{H}^{d-1}< \infty,$ and construct a recovery sequence  $\lbrace v_\eps\rbrace_\eps$ in the diffeomorphed setting.  The proof proceeds in three steps. In Step 1, we reduce to a target $v$ with finitely many connected components in the jump set. In Step 2, we construct a recovery sequence in the case that the jump set is contained in a single hyperplane. Finally, in Step 3, we use Step 2 to construct a curl-free field near hyperplanes containing the jump set and naturally extend the field far away from the jump set to define the gradient of a recovery sequence. 
    
    \emph{Step 1 (Simple target function $v$).}
        By Theorem \ref{thm:characterisation-of-piecewise-BV-space} applied to $v\in X(F(\Omega))$, defined with $A^F$ and $B^F$ in place of $A$ and $B$, there exists a sequence $\lbrace t_k\rbrace_k  \subset \R$ such that 
        \[
            J_{\nabla v} \subset \bigcup_{k \in \N} \{x \in F(\Omega) : x_d = t_k \}.
        \]
        Consequently, $F(\Omega) \setminus J_{\nabla v}$  consists of a countable number of connected components. By an approximation argument detailed in \cite[Theorem 5.10, Step 4]{contifonsecaleoni2002} we can approximate $v$ by a function which is affine on a \emph{finite} number of connected components. In particular, we can restrict ourselves to finitely many $t_k$, $k = 1, \ldots, k_0 $. We let $\overline{J_v}$ and $\partial J_v$ denote the relative closure and boundary, respectively, of $J_v$ within the union of hyperplanes determined by $t_k.$ 

        \textit{Step 2 (Interface contained in single plane).} In the following, we suppose $k_0 = 1$, so that the jump set $J_v$ is contained in a single hyperplane.  We construct a recovery sequence such that $\nabla v_\eps(x) = \nabla v(x)$  for each $x \in F(\Omega)$ with  $|x_d -  t_{k_0} | \not \in [0,\delta)$  where $\delta \in (0, 1/m)$ is a small chosen parameter.
        
		\emph{Substep 2.1 (Covering $J_v$ by cubes).}        
        We begin by covering  $\{x \in F(\Omega)\colon \,  x_d = t_{k_0} \}$ by $(d-1)$-dimensional cubes $\lbrace \omega_i\rbrace_{i \in I}$  with sidelength $2^{-m}$ as in Theorem \ref{thm:first-limsup}. Let $\eta_\eps \to 0^+$ such that $0 < \eta_\eps < 1/4$. We categorize the cubes into five different types as follows: We split the index set of the cubes into
        \begin{align}\label{eq:splitting_indexing}
            I = I_{A,B} \cup I_{B,A} \cup I_A \cup I_B \cup I_\partial
        \end{align}
        in the following way  (see also Figure \ref{fig:indexing}):
        \begin{itemize}
            \item (Transitional cubes) If $(1+\eta_\eps)\omega_i \subset J_v $, $\nabla v = A^F$ in $(1+ \eta_\eps )\omega_i \times (0,\delta) \cap F(\Omega)$ and $\nabla v = B^F$ in $(1+ \eta_\eps )\omega_i \times (-\delta, 0) \cap F(\Omega)$,  then $i \in I_{A,B}$. Analogously, we say  $ i \in I_{B, A}$ if $A^F$ is replaced by $B^F$.  
            \item (Pure phase cubes) If $(1+\eta_\eps)\omega_i \subset (F(\Omega)\cap \{x_d = t_{k_0} \}) \setminus  J_{\nabla v} $ and $\nabla v = A^F$ in $((1+ \eta_\eps)\omega_i \times (-\delta,\delta)) \cap F(\Omega) $, then $i \in I_{A}$. Analogously, we say $ i \in I_B$ if $A^F$ is replaced by $B^F$. 
            \item (Relative boundary cubes) If  $(1+\eta_\eps)\omega_i \cap \partial J_v \neq \emptyset$, then $i \in I_\partial$.           
        \end{itemize}
        \begin{figure}
            \centering
            \includesvg[width=0.7\linewidth]{images/cube_indexing_v2}
            \caption{The three different types of cubes detailed in the construction of the recovery sequence in the proof of Theorem \ref{thm:general-limsup}.}
            \label{fig:indexing}
        \end{figure}
        Note that any index $i$   falls exactly into one of the five categories above, provided that $m$ is large enough and $\delta > 0$ is sufficiently small.   
        Moreover,  a neighbor  of a cube either matches its behavior or is a boundary cube, in the sense that, for two indices  $i_1 , i_2 \in I$  of two neighboring cubes,   we have 
        \[
            i_1 \in I_A \implies i_2 \in I_A \cup I_\partial, \qquad \text{ and } \qquad  i_1 \in I_{A,B} \implies i_2 \in I_{A,B} \cup I_\partial, 
        \]
        and analogous statements with the roles of $A$ and $B$ switched. With this in mind, we will now adapt the construction of Theorem \ref{thm:first-limsup} (using also the same notation). 
        
        The main idea is as follows: if a cube is transitional, we will define $v_\eps$ near the cube in terms of an almost optimal geodesic as in Theorem \ref{thm:first-limsup}. If a cube is a pure phase, we will (essentially) define $v_\eps = v$ near the cube. Finally, given that a boundary cube may   have neighboring  cubes in all other four categories, we will define the transition $v_\eps$ explicitly in terms of $v$, but smooth out its gradient discontinuity. This will lead to an error term proportional to $\mathcal{H}^{d-1}(\omega_i)$ for the boundary cube. Then, using \ref{H4}(iii) we   control the number of boundary cubes present.        
        
        \emph{Substep 2.2 (Constructing recovery sequence $v_\eps$).}  
       Fix $\lambda > 0$. Consider a cube $\omega_i$ with center $x_i$. If $i \in I_{A,B}$, let $\zeta_i^\eps\colon (-\rho_\eps, \rho_\eps) \to \R^d$ be the curve connecting $A^F$ to $B^F$ given by Lemma \ref{lem:near_optimal_geodesic_diffeomorphed_domain}. For $i \in I_A$,  we define $\zeta_i^\eps\colon (-\rho_\eps, \rho_\eps) \to \R^d$ with $\zeta_i^\eps = \kappa(t_{k_0})$, and for $i \in I_B$ we set  $\zeta_i^\eps = - \kappa(t_{k_0})$. For the approximation on the relative boundary cubes, we use any (possibly suboptimal) phase transition with bounded energy that almost matches $v$ on top and bottom of the cube. For definiteness, we use the following transition:  First, set
        \[
            \Wtrunc^{k_0}(x) := \kappa(t_{k_0})  (x_d - t_{k_0} )  \chi_{\{\nabla v = A^F\}}  -\kappa (t_{k_0})  (x_d - t_{k_0} )   \chi_{\{\nabla v = B^F\}} \in H^1(F(\Omega);\R^d). 
        \]
        Then, let $\psi_\eps \in C^\infty([-\delta,\delta]; [0,1])$ be such that 
        \[
            \psi_\eps(0) = 0 \text{ in  a neighborhood of $0$}, \qquad \psi_\eps(s) = 1 \text{ for }s \in [-\delta, \delta]\setminus[-\rho_\eps, \rho_\eps],
        \]
        and 
        \[  
            \eps|\psi_\eps'| + \eps^2|\psi_\eps''| \le   C_\lambda
        \]
        for some $C_\lambda>0$ depending on $\lambda$, where we recall the definition of $\rho_\eps$ in \eqref{eq:choice_of_rhoeps}. 
        We then define
        \[
            \tilde v_\eps^{k_0}(x) := \psi_\eps \Wtrunc^{k_0} \in C^\infty\big( T_\eps;  \R^d\big),
        \]
        where $T_\eps := F(\Omega) \cap (\R^{d-1} \times ( t_{k_0} - \rho_\eps, t_{k_0} + \rho_\eps))$. 
        Define a suitable partition of unity $\phi^\eps_i$ as in Theorem~\ref{thm:first-limsup}, see  \eqref{eq:rec1}, and set
        \[
            v_\epsilon(x',x_d) : = \sum_{i \in I \setminus I_\partial} \left(\int_{0}^{x_d}\zeta_i(t)\, dt\right) \phi_i^\eps(x') + \sum_{i \in I_\partial} \tilde v^{k_0}_\eps(x) \phi_i^\eps(x') \quad \text{ for }x \in T_\epsilon.
        \]
                 We notice  that by construction
        \[
            \nabla v_\eps( x', t_k \pm \rho_\eps) =  \nabla V^{k_0}(x', t_{k_0} \pm \rho_\eps) = \kappa(\pm t_{k_0}) \otimes e_d.  
        \]
        Repeating the estimates from Theorem \ref{thm:first-limsup},  see in particular \eqref{6.36},  and applying \eqref{eq:Lipschitz_cond_1} and \eqref{eq:Lipschitz_cond_2} in the pure phase cubes, one derives   
        \begin{align}
            \limsup_{\epsilon\to 0} E_\epsilon^F[v_\epsilon; T_\epsilon]\leq & \sum_{i \in I_{A,B} \cup I_{B,A}}\mathcal{H}^{d-1}(\omega_i)\left(\sigma_{\rm geo}^F(x_i) + \varsigma( 2^{-m} ) + C\lambda \right) \nonumber \\
            &\quad + C\left(\sum_{i \in I_\partial}\H^{d-1}(\omega_i) + \sum_{i \in I_A \cup I_B}\H^{d-1}(\omega_i)\varsigma( 2^{-m}  )\right). \label{eqn:correctMiddleLayer}
        \end{align} 
        If $\delta  = \delta(m)$ with $\delta\to 0$ as $m \to \infty$ is chosen small enough (independently of $\eps$), we have 
        \begin{align*}
            \partial S_\delta \cap F(\Omega) = \big(\R^{d -1} \times \{t_{k_0}  - \delta, t_{k_0}  + \delta\}\big)\cap F(\Omega),
        \end{align*}
        where
        \begin{align}\label{eq:transition_layer_definition_general_proof}
            S_\delta :=  \left( \bigcup_{i \in I} \omega_i \times [ t_{k_0}  - \delta, t_{k_0}  + \delta]\right)  \cap F(\Omega).
        \end{align}
        Thus, for $\eps \ll \delta$ we have defined $v_\eps$ in all of $S_\delta.$ 
        On $S_\delta \setminus  T_\eps$ we proceed exactly   as in Step 3 of Theorem~\ref{thm:first-limsup} to extend $v_\eps$ to the whole of $ S_\delta$ with
        \begin{equation}\label{eqn:vanishingTopLayer}
            \limsup_{\epsilon\to 0} E_\epsilon^F[v_\epsilon; S_\delta \setminus T_\epsilon] = 0
        \end{equation}
        and that        \begin{equation}\label{eqn:gradMatching}
            \nabla v_\eps(x) = \nabla v(x)
        \end{equation}
      whenever $x \in F(\Omega) \cap \partial S_\delta $. Defining now
        \begin{align*} 
            M_\eps(x) := \begin{cases}
                \nabla v_\eps(x) & \text{for }  x \in S_\delta \cap F(\Omega) \\
                \nabla v(x) & \text{for }  x \in F(\Omega) \setminus S_\delta
            \end{cases}
        \end{align*}
        yields a curl-free field. Since the domain is simply connected, there exists a vector field $v_\eps \in H^2(\Omega; \R^d)$ (without renaming) such that $\nabla v_\eps =  M_\eps$ in $F(\Omega)$ since $F(\Omega)$ is simply connected.
        
        At this point, one could apply a diagonalization argument in $m$, $\lambda,$ and $\eps$ to show that $v_\eps$ forms a recovery sequence for $v$ in $F(\Omega)$. 
        
        To make use of this step for more complicated geometries, we phrase it as follows: We have shown that in $S_\delta$ there is a curl-free field $\nabla v_\eps$ matching $\nabla v$ near the top and bottom boundaries of $S_\delta$ such that the energetic estimates \eqref{eqn:correctMiddleLayer} and \eqref{eqn:vanishingTopLayer} hold. 
        
        \textit{Step 3 (Finitely many interfaces).} We now discuss how the above argument can be extended to a jump set that is contained in multiple hyperplanes, i.e., the number of level sets $k_0$ is bigger than $1$. For this, our objective is to construct a curl-free field representing $\nabla v_\eps.$

         For every $k = 1,\ldots,k_0$,  we apply Step 2 to construct a function $ v_\eps^{k}$ defined on $S_\delta^k \cap F(\Omega)$ via a partition $\{\omega_i^k\}_i$ of cubes with centers $x_i^k$, where $S_\delta^k$ are taken as in \eqref{eq:transition_layer_definition_general_proof} and we use index sets $I^k, I_A^k,$ etc., as in \eqref{eq:splitting_indexing}. (Here, all  notations include also the index $k$.)
         Importantly $v_\eps^{k}$ satisfies \eqref{eqn:correctMiddleLayer}, \eqref{eqn:vanishingTopLayer}, and \eqref{eqn:gradMatching}. Further, we may choose $\delta$ smaller, depending only on $v$, such that $\lbrace S_\delta^k\rbrace_{k=1}^{k_{0}}$  are pairwise disjoint. 
          Next, we define 
        \begin{align}\label{eq:def_rec_gradient_step3}
            M_\eps(x) = \begin{cases}
                \nabla v_\eps^k(x) & \text{ if } x \in F(\Omega) \cap S_\delta^k, \quad k=1,\ldots,k_0, \\
                \nabla v(x)           & \text{ if } x \in F(\Omega) \setminus \bigcup_{k=1}^{k_0} S_\delta^k.
            \end{cases}
        \end{align}
        By construction, this is curl-free. Therefore, since $\Omega$ is a simply connected Lipschitz set, there exists $v_\eps \in H^2(\Omega; \R^d)$ with $\nabla v_\eps = M_\eps$. Now, we observe that  
        \begin{align}\label{eq:rec_bound}
            \limsup_{\epsilon\to 0} E_\epsilon^F[v_\epsilon; F(\Omega)]  \leq & \sum_{k = 1}^{k_0} \left[ \sum_{i \in I_{A,B}^k \cup I_{B,A}^k}\mathcal{H}^{d-1}( \omega_i^k  )\left(\sigma_{\rm geo}^F(x_i^k) + \varsigma(2^{-  m }) + C\lambda \right)\right. \nonumber \\
            & \quad + C\left.\left( \sum_{i \in I_\partial^k}\H^{d-1}(  \omega_i^k  ) + \sum_{i \in I_A^k \cup I_B^k}\H^{d-1}( \omega_i^k)\varsigma(2^{- m }) \right) \right].
        \end{align} 
        Notice that
        \[
              \sum_{k = 1}^{k_0} \sum_{i \in I_\partial^k} \H^{d-1}( \omega_i^k  ) \to 0
        \]
        as $ m  \to \infty$ by \ref{H4}(iii). 
       Therefore, by choosing a diagonal sequence (without renaming), i.e.,  passing to the limits $m \to \infty$ and $\lambda \to 0$,  we infer  by the continuity of $\sigma_{\rm geo}^F$ (see Lemma \ref{lemma:relation_of_pushforward_of_geodesic_distance} and Lemma \ref{geodesic-distance-gradient-estimates} below) that
        \[
            \limsup_{\epsilon\to 0} E_\epsilon^F[v_\epsilon; F(\Omega)]  \leq   \int_{J_{\nabla v}} \sigma_{\rm geo}^F(x) \, d\H^{d-1}(x).
        \]
        Finally, we note that by construction  $\nabla v_\eps \to \nabla v$ in $L^2(F(\Omega);\R^{d\times d})$. Indeed, by \eqref{eq:rec_bound} and Theorem~\ref{mainthm:compactness} every subsequence of $\nabla v_\eps$ has a subsequence that converges to a gradient $\nabla w$ with $w \in X(\Omega)$. By our definition of $v_\eps$ in \eqref{eq:def_rec_gradient_step3} and since $\delta = \delta(m) \to 0$, we infer $\nabla w = \nabla v$. By the Urysohn subsequence principle the convergence of $\nabla v_\eps$ follows.   which by the Poincar\'e inequality, up to shifting $v_\eps$ by a constant, implies that $v_\eps \to v$ in $L^2(F(\Omega);\R^d).$ 
        As previously mentioned, this concludes the proof by Lemma \ref{lem:equivalence_of_gamma_limits_under_diffeomorphism}. 
    \end{proof}
    
  \section{Geodesics with bounded path lengths and consequences}
\label{sec:geodesics-disc}

  The main result of this section (Theorem \ref{existence-of-geodesics-with-bounded-path-length}) establishes the existence of geodesics with bounded path length. This result allows us to deduce four consequences which have been used crucially in the proof of  the  $\Gamma$-convergence for $E_\eps$: 
  \begin{enumerate}
        \item gradient bounds on the geodesic energy in terms of the endpoint (Lemma \ref{geodesic-distance-gradient-estimates-new}),
        \item Lipschitz continuity of $d_{\WtruncLI}$ with respect to paths $\psi$ (see Proposition \ref{comparision-geodesic-distances}),
        \item the fact that $d_W$ remains unchanged under truncation of the potential $W$ (Lemma \ref{lemma:trunction_does_not_matter}), and
        \item uniform bounds on the path length of a Modica--Mortola type reparameterization of nearly-optimal paths in terms of the end points (Lemma \ref{lemma:rescaling_of_nearly_geodesics}).
  \end{enumerate}

We formulate the results of this section in a slightly more general setting. We define the potential $\Wtrunccc \colon \overline{\Omega} \times \R^{d\times d} \to [0,\infty)$ by 
\begin{align}\label{frakdef}
\Wtrunccc(x,M) = \min \lbrace W(x,M), \alpha_* (|M|^{2\beta_*} + 1) \rbrace \quad \text{for } (x,M) \in \overline{\Omega} \times \R^{d \times d}, 
\end{align}
where $\alpha_* \ge 1$ and $\beta_* \ge 0$.  
In particular, for $\alpha_*$ large enough and $\beta_* =1$, we get $\Wtrunccc = W$ by \ref{H2}. On the other hand, for $\alpha_* \equiv \frac{C_0}{2}$ and $\beta_*=0$, we recover the potential $\WtruncLI$ introduced in \eqref{Wtrunc-neu}. 

\subsection{Statement of the main result and four consequences} 
We now formulate the main result of this section on the bounded length of nearly-optimal paths. This result is in the spirit of similar statements found in \cite{CristoferiDeutschPignatelli26, CrisFonGa23, CristoferiGravina2021}, although the exact structure of the geodesic distance function differs here. 
For convenience, we define the set of paths between two matrices $M, N \in \R^{d \times d}$ by  
\begin{align}\label{paths}
    {\rm Paths}(M, N) := \big\{\Phi \in W^{1,1}((-1, 1); \R^{d\times d}): \Phi(-1) = M, \, \Phi(1) = N \big\}.
\end{align}
and the set of admissible curves by
\begin{align}\label{admissable-control}
    {\rm Curv} := \big\{\psi \in W^{1,1}((-1, 1); \Omega)\colon \,  {\rm osc}(\psi) < \delta_0 \}
\end{align}
where  ${\rm osc}(\psi)$ denotes the oscillation of $\psi$ and $\delta_0$ is defined as 
\begin{align}\label{eq:definition_delta0}
    \delta_0 = \frac{\min_{x \in \overline{\Omega} }|A(x) - B(x)|}{4\max\{{\rm Lip}(A) + {\rm Lip}(B), 1\}}
\end{align}
with ${\rm Lip}(A), {\rm Lip}(B)$ being the Lipschitz constants of the wells $A$ and $B$. 

\begin{theorem}\label{existence-of-geodesics-with-bounded-path-length} 
    Let $W$ satisfy \ref{H1}--\ref{H3}. Let $M,N \in \R^{d \times d}$ and   $\psi \in {\rm Curv}$.  Then, there exists a sequence $\{ \Phi_m \}_m \subset {\rm Paths}(M,N)$  with 
    \[
       \int_{-1}^1   \Wtruncc(\psi, \Phi_m)|\Phi_m'| \,dt \leq d_\Wtrunccc(\psi, M, N) + \frac{1}{2^m}
    \]
    such that
    \begin{equation}\label{eqn:geoLengthBound}
         \int_{-1}^1  |\Phi_m'| \,dt \leq  C \min\lbrace 1+ |M|^{1-\beta_*} + |N|^{1-\beta_*}, \alpha_*  \rbrace (1 + |M|^{\beta_* +1} + |N|^{\beta_* +1} )
    \end{equation}
    with a constant $C >0 $ depending only on $A$,  $B$, and $\|\psi'\|_{L^1(-1,1)}$.
\end{theorem}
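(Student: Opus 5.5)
The plan is to take an arbitrary almost-minimizing sequence in ${\rm Paths}(M,N)$ for $d_\Wtrunccc(\psi,M,N)$ and modify each of its elements by surgery. The surgery should increase the cost by at most $2^{-m-1}$ and make the Euclidean length satisfy \eqref{eqn:geoLengthBound}.

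\emph{A priori energy bound.} First I bound $d_\Wtrunccc(\psi,M,N)$ from above by the cost of an explicit competitor. Go along a straight segment from $M$ to $A(\psi(-1))$, following it in time on a short interval. Then follow the well curve $t\mapsto A(\psi(t))$, which costs nothing because $\Wtrunccc(\psi(t),A(\psi(t)))=0$. Finally go along a straight segment from $A(\psi(1))$ to $N$. The middle piece has length at most ${\rm Lip}(A)\|\psi'\|_{L^1}$. For the segments, $\Wtruncc\le \min\{C(1+|M|),\sqrt{\alpha_*}(1+|M|^{\beta_*})\}$, so the competitor gives
\[
d_\Wtrunccc(\psi,M,N)\le C\min\{1+|M|^{1-\beta_*}+|N|^{1-\beta_*},\alpha_*\}\,(1+|M|^{\beta_*+1}+|N|^{\beta_*+1})=:\Lambda .
\]
Hence every near-minimizer $\Phi$ has cost at most $\Lambda+1$, and it remains to bound the length by $C\Lambda$.

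\emph{Length away from the wells.} Set $\delta:=\delta_0$, with $\delta_0$ as in \eqref{eq:definition_delta0}. Since ${\rm osc}(\psi)<\delta_0$, the moving wells stay in $\delta$-neighbourhoods of $A(\psi(-1))$ and $B(\psi(-1))$, and these neighbourhoods are disjoint. On the set of times where $\mathrm{dist}(\Phi(t),\{A(\psi(t)),B(\psi(t))\})\ge\delta$, \ref{H2} gives $\Wtruncc(\psi,\Phi)\ge c\min\{\delta, \sqrt{\alpha_*}\}$. This lower bound is uniform for bounded $|\Phi|$, and for large $|\Phi|$ it grows like $|\Phi|^{\beta_*}$. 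Therefore the length of $\Phi$ spent there is at most $C(\Lambda+1)$. The cost bound also confines the path to a ball of radius comparable to $1+|M|+|N|$ (up to the $\beta_*$-powers), which is where the factor $(1+|M|^{\beta_*+1}+|N|^{\beta_*+1})$ comes from.

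\emph{Length near the wells.} This is the main obstacle. Near the wells the weight degenerates, so the path may wiggle with large length at small cost. Moreover, the functional is not invariant under reparametrization, because $\psi$ depends on $t$. My approach is surgery. Fix a small radius $r=r_m$ with $C r_m^2\le 2^{-m-2}$, and call $\tau_1$ and $\tau_2$ the first entry time into and the last exit time from the tube $\{|\Phi(t)-A(\psi(t))|<r\}$. On $[\tau_1,\tau_2]$, replace $\Phi$ by three pieces: a short radial segment to $A(\psi(\tau_1))$ performed on a tiny time interval, then the zero-cost well curve $A(\psi(t))$, then a radial segment from $A(\psi(\tau_2))$ to $\Phi(\tau_2)$. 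Each radial piece costs at most $Cr^2$, because the weight is at most $Cr$ along it and its length is $r$; the well curve has length at most ${\rm Lip}(A)\|\psi'\|_{L^1}$. Do the same for $B$.

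This surgery leaves an intermediate annulus $r\le\mathrm{dist}\le\delta$ where only the weaker lower bound $cr$ holds. To treat it, I would split the annulus dyadically into shells $2^{-j-1}\delta\le\mathrm{dist}\le 2^{-j}\delta$ and perform the same first-entry/last-exit surgery at each dyadic level. The result is that each shell is crossed only in the inward and outward directions between the surgeries. The length spent in shell $j$ is then comparable to $2^{-j}\delta$ times the number of crossings, and the number of crossings is controlled by the cost, since each crossing of shell $j$ costs at least $c4^{-j}\delta^2$. Summing over $j$ gives a bound independent of $m$. If this counting fails, the fallback is to exploit that $t\mapsto \mathrm{dist}(\Phi(t),A(\psi(t)))$ is $W^{1,1}$, apply the coarea formula to it, and bound the tangential motion by the chain rule together with ${\rm Lip}(A)\|\psi'\|_{L^1}$.
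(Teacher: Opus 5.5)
Your a priori bound on $d_{\mathfrak W}(\psi,M,N)$ is fine, and so is the length estimate on the times where $\Phi$ is $\delta$-far from both moving wells. The innermost surgery is also fine; it is essentially the paper's Step 1. Write the replacement there as $A(\psi(t))$ plus a linear interpolation of the endpoint offsets, so that it glues continuously to the moving well.

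The gap is the intermediate region $r_m\le|\Phi-A(\psi)|\le\delta$. That is where the whole difficulty lies, and neither of your two arguments closes it.

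If ``surgery at each dyadic level'' means replacing the path inside the tube of radius $2^{-j}\delta$, it is not admissible at coarse levels. The replacement can exceed the original cost by an amount of order $4^{-j}\delta^2$, which is not small, so near-optimality is lost.

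If it only means decomposing time by entry/exit times, the path can still leave and re-enter a shell many times. More importantly, it can move tangentially inside the shell, around the moving well, with large length. The length in shell $j$ is therefore not comparable to $2^{-j}\delta$ times the number of radial crossings.

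Bounding the number of crossings of shell $j$ through the global cost $\Lambda+1$ does not help either. It gives $\lesssim 4^{j}\Lambda/\delta^2$ crossings, hence length $\lesssim 2^{j}\Lambda/\delta$ in shell $j$ and $\sim\Lambda/r_m$ in total, which diverges as $m\to\infty$.

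The coarea fallback has the same defect. It controls the radial variation of $t\mapsto|\Phi(t)-A(\psi(t))|$, but the chain rule does not bound the tangential motion by ${\rm Lip}(A)\|\psi'\|_{L^1}$.

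The missing ingredient is local near-optimality (Lemma \ref{geodesics-are-local-geodesics}), applied scale by scale against an explicit competitor.

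Go backwards from the first entry time into the $r_m$-tube, and let $s_k$ be the first time the path enters the tube of radius $2^k r_m$. These are the paper's annuli $\mathcal{A}^{A,m-k}$ with $r_m=2^{-m}\delta$.

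On $[s_{k+1},s_k]$ the path stays at distance $\gtrsim 2^k r_m$ from $A(\psi)$, so $\sqrt{\mathfrak W}(\psi,\Phi)\gtrsim 2^k r_m$ there. Take as competitor $A\circ\psi$ plus the linear interpolation of the two endpoint offsets, both of size $\lesssim 2^k r_m$. Its cost is $\lesssim 2^k r_m\bigl(2^k r_m+\int_{s_{k+1}}^{s_k}|\psi'|\,dt\bigr)$.

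Dividing the cost bound by the weight bound yields $\int_{s_{k+1}}^{s_k}|\Phi'|\,dt\lesssim 2^k r_m+\int_{s_{k+1}}^{s_k}|\psi'|\,dt+\mathrm{err}/(2^k r_m)$. This sums over $k$ to $C(1+\|\psi'\|_{L^1})$, provided the near-optimality error is $\lesssim r_m^2$. That is why the paper starts from $2^{-2m}$-minimizers while cutting at radius $2^{-m}\delta$.

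You also need Lemma \ref{lemma:admissibility_lemma}: a near-minimizer cannot visit the $\delta$-neighbourhoods in the order $A$, $B$, $A$. This ensures that on these time intervals the path does not approach $B(\psi)$, so the lower bound on $\sqrt{\mathfrak W}$ really is of order $2^k r_m$.
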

 
The proof of Theorem \ref{existence-of-geodesics-with-bounded-path-length} is based on methods already used in \cite{CrisFonGa23, CristoferiGravina2021, ZunigaSternberg2016}. One covers the domain of a nearly-optimal curve with suitable intervals where on each interval its   length is controlled by the length (with respect to $W$) of a suitable competitor for the geodesic distance. We remark that the setup is of different nature compared to the aforementioned literature since the geodesic distance is \textit{not} invariant under reparameterization and it depends on the curve $\psi$.E

 Before we present the proof of Theorem \ref{existence-of-geodesics-with-bounded-path-length}, we discuss its applications, i.e., we give the proofs of 
Proposition \ref{comparision-geodesic-distances},  Lemma \ref{lemma:trunction_does_not_matter}, Lemma \ref{lemma:rescaling_of_nearly_geodesics},  and Lemma \ref{geodesic-distance-gradient-estimates-new}.

\begin{proof}[Proof of Proposition \ref{comparision-geodesic-distances}]
    We first note  that by \ref{H3} the  function $\sqrt{\WtruncLI}$  is globally Lipschitz in $x$ with Lipschitz constant $C_{\rm Lip}  = C_{\rm Lip}(C_0)>0$ . Consider   $\Phi \in W^{1,1}( (-1,1);  \R^{d \times d})$ and observe
    \[
        \int_{-1}^{1} \sqrt{\WtruncLI}(\psi_1, \Phi)|\Phi'| \,ds \leq \int_{-1}^{1} \sqrt{\WtruncLI}( \psi_2, \Phi)|\Phi'| \,ds +   C_{\rm Lip}  \norm{\psi_1 -  \psi_2}_{\infty}\int_{-1}^{1} |\Phi'| \,ds.
    \]
      Now by taking $\Phi$ as a minimizing sequence for $d_{\WtruncLI}( \psi_2, M, N)$ provided in Theorem \ref{existence-of-geodesics-with-bounded-path-length} (for $\alpha_*  = \frac{C_0}{2}$ and  $\beta_*=0$)   we conclude.  
 \end{proof}
\begin{proof}[Proof of Lemma \ref{lemma:trunction_does_not_matter}]
 First, by definition of $V_0$  it clearly  holds  
\[
    d_{W}(x, A(x), B(x)) \geq d_{\WtruncLI}(x, A(x), B(x))  
\]
for all $x \in \overline \Omega$.   For the other inequality, by choosing $\alpha_* = \frac{C_0}{2}$ and $\beta_* =0$, $\Wtrunccc$ defined in \eqref{frakdef} coincides with $\WtruncLI$. With these parameters chosen, we use Theorem \ref{existence-of-geodesics-with-bounded-path-length} for every $x\in \overline{\Omega}$ to approximate $ d_{\WtruncLI}  (x, A(x), B(x))$ with a sequence of paths $\lbrace \Phi_m  \rbrace_m  \subset {\rm Paths}(A(x), B(x))$  satisfying
    \[
        \int_{-1}^1 |\Phi_m'| \,dt \leq  C
    \]
    where $  C > 0$ depends on $A$ and $B$, but not on  $  C_0 =2\alpha_*$. In particular, there exists  $R>0$ independent of $C_0$ such that all paths $\Phi_m$ are contained in   $B_R(0)$. Now, choosing $C_0$ large enough such that  $  C_0 > \sup_{(x,M) \in \overline{\Omega} \times B_R(0)}W (x,M)$ yields $  \WtruncLI (x, \Phi_m) = W(x, \Phi_m)$ and, hence, proves the claim.
\end{proof}

\begin{proof}[Proof of Lemma \ref{lemma:rescaling_of_nearly_geodesics}]
    The fact that nearly-optimal paths $\Phi$ can be chosen to satisfy $\Phi(t) = A(y_0)$ and $\Phi(-t) = B(y_0)$ for all $t \in (\eps  K_\lambda,   1)$  follows by a  Modica--Mortola reparameterization (cf.\ \cite[Lemma 4.5]{CristoferiGravina2021}). The bound on the length \eqref{boundonthelength} is a consequence of $A,B \in L^\infty(\Omega;\R^{d \times d})$ and   Theorem \ref{existence-of-geodesics-with-bounded-path-length} above, applied for $\alpha_*$ large enough and $\beta_* =1$ in \eqref{frakdef}, which guarantees $\Wtrunccc = W$. 
\end{proof}

Concerning Lemma \ref{geodesic-distance-gradient-estimates-new}, we prove the following more general version. Clearly, 
Lemma \ref{geodesic-distance-gradient-estimates-new} is an immediate corollary of it  (by choosing $\beta_*=0$ and a prototypical $W$, see before \eqref{eqn:dampened osc}).  

\begin{lemma}\label{geodesic-distance-gradient-estimates}
    Under the same assumptions of Theorem \ref{existence-of-geodesics-with-bounded-path-length}, we have that
    $f(x,M) := d_\Wtrunccc(x, A(x), M)$ 
    is locally Lipschitz in $\overline{\Omega}\times \R^{d\times d}$ and satisfies the gradient estimates 
    \begin{equation}\label{eqn:gradxbound0}
        |\nabla_M f(x,M)|  \leq  \Wtruncc (x, M)
    \end{equation}
    for almost all $(x,M) \in \overline{\Omega}\times \R^{d \times d}$  and, given $R>0$,      \begin{equation}\label{eqn:gradxbound}
       |\nabla_x f(x,M)|  \leq  C^R_{\rm Lip}
    \end{equation}
    for almost all $(x,M) \in \overline{\Omega}\times B_R(0)$,
    where  $ C^R_{\rm Lip}$ depends on $\alpha_*$, $\beta_*$, $W$, $A$, and $R$. In case $\beta_* = 0$, we have $C^R_{\rm Lip} = \bar{C}(1+R)$ for a constant $\bar{C}>0$ depending only on $\alpha_*$, $W$, and $A$.      
\end{lemma}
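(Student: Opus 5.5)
The plan is to prove the two gradient bounds separately. Each rests on a triangle-type inequality for $d_\Wtrunccc$ combined with a short competitor path; local Lipschitz continuity then follows, and Rademacher's theorem gives the almost-everywhere statements.

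\emph{Step 1 (estimate in $M$).} Fix $x$ and $M,N$. Concatenating (and reparametrizing onto $(-1,1)$) an almost optimal path from $A(x)$ to $M$ with the straight segment $t\mapsto M+\frac{t+1}{2}(N-M)$ gives
\[
f(x,N)\le f(x,M)+\int_0^1 \Wtruncc\big(x,M+s(N-M)\big)\,|N-M|\,ds .
\]
For the frozen point $x$ the functional $\int\Wtruncc(x,\Phi)|\Phi'|$ is invariant under reparametrization, so the concatenation costs nothing extra. By symmetry the same bound holds with $M,N$ exchanged. Since $\Wtrunccc\le \alpha_*(|M|^{2\beta_*}+1)$ is continuous, we get that $f(x,\cdot)$ is locally Lipschitz. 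Dividing by $|N-M|$ and letting $N\to M$ shows that at every differentiability point $|\nabla_M f(x,M)|\le \Wtruncc(x,M)$; here we use continuity of $W$, hence of $\Wtruncc$. (The constant $2$ in the definition of $d$ would only change the bound by a factor; I will track it so that it matches the stated normalization.)

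\emph{Step 2 (estimate in $x$).} Fix $x,y$ with $|x-y|<\delta_0$ and $|M|\le R$. The quantity $d_\Wtrunccc(x,A(x),M)$ is a geodesic distance with the constant curve $\psi\equiv x\in{\rm Curv}$. First compare endpoints: by Step 1, $|d_\Wtrunccc(y,A(x),M)-d_\Wtrunccc(y,A(y),M)|\le C|A(x)-A(y)|\le C|x-y|$, using boundedness of $\Wtruncc$ on bounded sets and the Lipschitz continuity of $A$. Then compare potentials at fixed endpoints. Take the almost optimal paths $\Phi_m$ from Theorem~\ref{existence-of-geodesics-with-bounded-path-length} for $d_\Wtrunccc(y,A(x),M)$, whose length is bounded by a constant $L(R)$ depending only on $R$, $A$, $B$, $\alpha_*$, $\beta_*$. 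These paths stay in a ball $B_{R'}$. By \ref{H3}, $\sqrt W$ is $C_{\rm Lip}(R')$-Lipschitz in $x$ on that ball, and the minimum with the $x$-independent function $\alpha_*(|M|^{2\beta_*}+1)$ keeps $\Wtruncc$ Lipschitz with the same constant. Hence
\[
d_\Wtrunccc(x,A(x),M)\le d_\Wtrunccc(y,A(x),M)+C_{\rm Lip}(R')\,L(R)\,|x-y|+2^{-m}.
\]
Letting $m\to\infty$ and exchanging $x$ and $y$ gives local Lipschitz continuity in $x$ uniformly for $|M|\le R$, and hence \eqref{eqn:gradxbound}. Joint local Lipschitz continuity follows by combining the two estimates, and Rademacher's theorem gives the a.e.\ gradient bounds.

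\emph{Step 3 (the case $\beta_*=0$).} This is the delicate point: we must make the dependence on $R$ linear. With $\beta_*=0$ the potential $\Wtrunccc\le 2\alpha_*$ is bounded. The geodesics then need not leave a fixed ball, since any detour costs at least its length times a positive lower bound of $\Wtruncc$ away from the wells. So I would use the global Lipschitz constant of $\sqrt{\Wtrunccc}$ in $x$, as in the proof of Proposition~\ref{comparision-geodesic-distances}. For that I need the Lipschitz constant of $\sqrt{W}$ in $x$ to be uniform on the region where $W\le 2\alpha_*$, which is a bounded set by \ref{H2}; outside this region the truncation is inactive in $x$. The path-length bound \eqref{eqn:geoLengthBound} with $\beta_*=0$ reads $C\min\{\cdot,\alpha_*\}(1+|A(x)|+|M|)\le C(1+R)$. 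The endpoint term contributes only $C|x-y|$. Altogether this gives $C^R_{\rm Lip}=\bar C(1+R)$. I expect the main obstacle to be this uniform-in-$M$ Lipschitz control of the truncated $\sqrt{\Wtrunccc}$ in $x$; in particular I need to check that the minimum does not spoil Lipschitz continuity near the switching set, which holds because a minimum of two Lipschitz functions is Lipschitz.
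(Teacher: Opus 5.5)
Your proposal is correct and follows essentially the same route as the paper. The $M$-estimate comes from concatenating a near-optimal path with a straight segment at the frozen point. The $x$-estimate uses near-optimal paths of bounded length from Theorem~\ref{existence-of-geodesics-with-bounded-path-length}, together with \ref{H3} and a straight segment between $A(x)$ and $A(y)$; the paper merges these into a single concatenated competitor instead of your two-step triangle inequality. In the case $\beta_*=0$ you also argue explicitly that $\sqrt{\mathfrak W}$ is Lipschitz in $x$ uniformly in $M$, since the truncation is independent of $x$ once $|M|$ is large, which usefully justifies a point the paper only asserts.
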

  
\begin{proof}
We start with the Lipschitz property with respect to the  $x$ variable.    Let $M \in B_R(0)$ and $x, \tilde x \in {\Omega}$. Consider $\Phi \in {\rm Paths}(M, A(\tilde x))$ satisfying \eqref{eqn:geoLengthBound}. Further, define the straight line 
    \[
        \Theta(s) := sA(x) + (1-s)A(\tilde x) \quad \text{for } s \in [0,1],
    \]
  and 
    \[
        \Psi := \begin{cases}
            \Phi\big(2s+1   \big), & s \in (-1, 0], \\
            \Theta(s ), & s \in (0, 1).
        \end{cases}
    \]
    Notice that $\Psi \in {\rm Paths}(M, A(x))$. 
Defining
${\rho} :=  C\min\lbrace 1 +  R^{1-\beta_*} + \Vert A \Vert_\infty^{1-\beta_*}, \alpha_*  \rbrace (1 + R^{\beta_* +1} + \Vert A \Vert_\infty^{\beta_* +1} )$ we find  by \eqref{eqn:geoLengthBound} that $\Psi$ is contained in $B_\rho(0)$ for a sufficiently large universal constant $C$.
We observe that    by \ref{H3} the  function $\sqrt{\Wtrunccc}$  is  Lipschitz in $x$ on $\overline{\Omega} \times B_\rho(0)$ with Lipschitz constant $C_{\rm Lip}  = C_{\rm Lip}(\rho, \alpha_*, \beta_*)>0 $. Moreover, we denote by $C_{\rm max}$ the maximum of $\sqrt{\Wtrunccc}$  on $\overline{\Omega} \times \overline{B_\rho(0)}$. A simple substitution gives
    \begin{align*}
        &~ f(x, M) - \int^1_{-1}  \Wtruncc  (\tilde x, \Phi(s))|\Phi'(s)| \, ds  \\
        & \leq \int^1_{-1}   \Wtruncc  (x, \Psi(s))|\Psi'(s)| \, ds - \int^1_{-1}  \Wtruncc  (\tilde x, \Phi(s))|\Phi'(s)| \, ds \\
        & \leq \int^1_{-1} \big(  \Wtruncc  (x, \Phi(s)) -   \Wtruncc  (\tilde x, \Phi(s))\big)|\Phi'(s)| \, ds + \int^1_{0}   \Wtruncc  (x, \Theta(s))|\Theta'(s)| \, ds \\
        &\leq  C_{\rm Lip}  |x - \tilde x| \int^1_{-1} |\Phi'(s)| \, ds +  C_{\rm max}\,\text{Lip}(A)|x -\tilde x|.
    \end{align*}    
    As the above estimates apply to a minimizing sequence for $d_W(\tilde{x}, A(\tilde{x}), M)$ from Theorem \ref{existence-of-geodesics-with-bounded-path-length}, we find 
    \[
        f(x, M) - f(\tilde x, M) \leq   \big( C_{\rm Lip} \, \rho  + C_{\rm max} {\rm Lip}(A)  \big) |x - \tilde x|.
    \]
        We derive the Lipschitz continuity in the $x$-variable by exchanging the roles of $f(x, M)$ and $f(\tilde x, M)$. This also implies the gradient bound \eqref{eqn:gradxbound}. In the case $\beta_*  = 0$, $C_{\rm Lip}$ and $C_{\rm max}$ only depend on $\alpha_*$ and $W$, and ${\rho} $ depends on $\alpha_*$, $\Vert A\Vert_\infty$, and  $R$, with a linear dependence on $R$. 
    
    The Lipschitz continuity with respect to the variable $M$ follows by analogous arguments. 
    In particular, given $M, \tilde{M} \in \R^{d \times d}$, repeating the above computations   (though it is now just the reverse triangle inequality)  with 
    \[
        \Theta(s) := sM + (1-s)\tilde M \quad \text{for } s \in [0,1],
    \]
    we obtain
    \[
        \frac{f(x,M) - f(x, \tilde M)}{|M - \tilde M|} \leq \int^1_0  \Wtruncc  (x, \Theta(s)) \, ds,
    \]
    and thus the Lipschitz continuity.     At a point of differentiability $(x,M)$ of $f$,  we let $\tilde M \to M$ in the above equation, and thereby $\Theta \to M$ uniformly, to recover the  gradient estimate \eqref{eqn:gradxbound0}.   
\end{proof}
 
\subsection{Proof of Theorem \ref{existence-of-geodesics-with-bounded-path-length}}  
We now proceed towards the proof of Theorem \ref{existence-of-geodesics-with-bounded-path-length}. In order to obtain nearly-optimal paths with bounded path length, we potentially need to modify the path to a simpler path with controlled length. This procedure consists in replacing parts of the original paths.  As for this reason, we need to deal with time subintervals of $(-1,1)$, it is helpful to introduce the following notation: 
For $M, N \in \R^{d \times d}$ and   $-1 \leq t_1 \leq t_2 \leq 1$ we set 
    \[
   {\rm Paths}(M, N, t_1, t_2) := \big\{\Phi \in W^{1,1}((t_1, t_2); \R^{d\times d})\colon \,  \Phi(t_1) = M, \, \Phi(t_2) = N\big\}.
    \]
    For a curve $\psi \in W^{1,1}((-1,1); \overline{\Omega})$, we further define
    \[
        d_\Wtrunccc(\psi, M, N, t_1, t_2) := \inf\left\{ \int_{t_1}^{t_2} \Wtruncc (\psi(s), \Phi(s))|\Phi'(s)| \, dt\colon \,  \varphi \in {\rm Paths}(M, N, t_1, t_2) \right\}.
    \]
We note that for $t_1 = -1$ and $t_2 =1$ this is consistent with the notation introduced in  \eqref{eqn:geoGeneral} and \eqref{paths}. 
The next lemma shows that geodesics for $d_\Wtrunccc(\psi, M, N)$ are also locally optimal in every subinterval $[t_1,t_2]\subset [-1 ,1]$. We omit its proof as it follows from classical arguments by contradiction.
\begin{lemma}\label{geodesics-are-local-geodesics}
  Let   $\eps>0$, and $M, N \in \R^{d \times d}$ be fixed. Let $\psi \in W^{1,1}((-1,1); \overline{\Omega})$.   Suppose that $\Phi \in {\rm Paths}(M,N)$ is such that  \[
       \int_{-1}^1  \Wtruncc(\psi, \Phi)|\Phi'|\,dt \leq d_\Wtrunccc(\psi, M, N) + \epsilon.
    \]
    Then,  
    \[
        \int_{t_1}^{t_2} \Wtruncc(\psi, \Phi)|\Phi'|\,dt \leq d_\Wtrunccc(\psi, \Phi(t_1), \Phi(t_2), t_1, t_2) + \epsilon
    \]
    for all  $-1\leq t_1 < t_2 \leq 1$.
\end{lemma}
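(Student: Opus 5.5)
We argue by contradiction, using a cut-and-paste construction on the subinterval $[t_1,t_2]$. Suppose that for some $-1\le t_1<t_2\le 1$ we had
\[
\int_{t_1}^{t_2} \Wtruncc(\psi,\Phi)|\Phi'|\,dt > d_\Wtrunccc(\psi,\Phi(t_1),\Phi(t_2),t_1,t_2) + \eps .
\]
Then there is $\tilde\Phi\in{\rm Paths}(\Phi(t_1),\Phi(t_2),t_1,t_2)$ with
\[
\int_{t_1}^{t_2}\Wtruncc(\psi,\tilde\Phi)|\tilde\Phi'|\,dt < \int_{t_1}^{t_2} \Wtruncc(\psi,\Phi)|\Phi'|\,dt - \eps .
\]
Note that $\Phi\in W^{1,1}$ has a continuous representative, so the values $\Phi(t_1)$ and $\Phi(t_2)$ are well defined.

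Define the glued path $\hat\Phi:=\Phi$ on $(-1,1)\setminus[t_1,t_2]$ and $\hat\Phi:=\tilde\Phi$ on $[t_1,t_2]$. Since the pieces agree at $t_1$ and $t_2$, $\hat\Phi$ is continuous and piecewise $W^{1,1}$, hence $\hat\Phi\in W^{1,1}((-1,1);\R^{d\times d})$. Its endpoints are unchanged, so $\hat\Phi\in{\rm Paths}(M,N)$.

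The key point is that $\psi$ is \emph{not} reparametrized: the same time variable $t$ is used both inside and outside $[t_1,t_2]$. The integrand $\Wtruncc(\psi(t),\cdot)$ therefore matches exactly on each piece, and the functional is additive over the subintervals. This gives
\[
d_\Wtrunccc(\psi,M,N)\le \int_{-1}^1\Wtruncc(\psi,\hat\Phi)|\hat\Phi'|\,dt < \int_{-1}^1\Wtruncc(\psi,\Phi)|\Phi'|\,dt-\eps \le d_\Wtrunccc(\psi,M,N).
\]
This is a contradiction.

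Two technical points need care:
\begin{itemize}
\item The infimum $d_\Wtrunccc(\psi,\cdot,\cdot,t_1,t_2)$ is taken over curves on the same interval $(t_1,t_2)$ with the same $\psi$, so additivity holds exactly. This is the only place where non-invariance under reparametrization could cause trouble, and it is avoided because the competitor class uses the same time interval.
\item Finiteness of all the integrals follows from the continuity of $\Wtruncc$ and the boundedness of $W^{1,1}$ paths.
\end{itemize}
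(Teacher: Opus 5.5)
Your proof is correct and is exactly the classical cut-and-paste argument by contradiction that the paper invokes (and omits): you replace $\Phi$ on $[t_1,t_2]$ by a strictly better competitor defined on the same time interval, leaving $\psi$ untouched, and obtain a path in ${\rm Paths}(M,N)$ whose energy falls below $d_{\Wtrunccc}(\psi,M,N)$. Your observation that the competitor class uses the same time interval, so that the lack of reparametrization invariance plays no role, is precisely why the classical argument applies here.
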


The next lemma tells us that, if we have a nearly-optimal paths, whose energy has low spatial oscillations, then the geodesic cannot jump between the wells $A$ and $B.$ Recall the definition of $\delta_0$ in \eqref{eq:definition_delta0}.  

\begin{lemma}\label{lemma:admissibility_lemma} 
    Let    $M, N \in \R^{d \times d}$ be fixed, and let $\psi \in {\rm Curv}$. Then, there exist $\eps_1 >0$ and $\delta_1 \in (0,\delta_0)$  such that for all $\eps \in (0,\eps_1)$, all $\delta \in (0,\delta_1)$, and all $\Phi \in {\rm Paths}(M,N)$ with 
    \begin{align}\label{neqeq} 
        \int_{-1}^{1} \Wtruncc(\psi, \Phi)|\Phi'|\,dt \le  d_\Wtrunccc(\psi, \Phi(-1), \Phi(1)) + \eps
    \end{align}
   we have  that  either
    \[
        \sup \big\{ t \in (-1,1) \colon \, |\Phi(t) - A(\psi(t))| < \delta \big\} < \inf\big\{ t \in (-1,1) \colon \,  |\Phi(t) - B(\psi(t))| < \delta \big\},
    \]
    or 
    \[
        \sup \big\{ t \in (-1,1) \colon \,  |\Phi(t) - B(\psi(t))| < \delta \big\} < \inf\big\{ t \in (-1,1) \colon \,  |\Phi(t) - A(\psi(t))| < \delta \big\}.
    \]
  \end{lemma}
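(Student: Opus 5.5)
The plan is to argue by contradiction. The claim is that a nearly-optimal path cannot first come close to one well, then close to the other, and then return close to the first. If it did, we could shortcut the path and save a definite amount of energy. That saving would exceed $\eps$ once $\eps<\eps_1$, contradicting the near-optimality \eqref{neqeq} via Lemma \ref{geodesics-are-local-geodesics}.

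Set $T_A:=\{t\colon |\Phi(t)-A(\psi(t))|<\delta\}$ and define $T_B$ analogously. Since $\delta<\delta_0$ and ${\rm osc}(\psi)<\delta_0$, the choice \eqref{eq:definition_delta0} gives, for all $s,t$,
\[
|A(\psi(s))-B(\psi(t))|\ge |A-B|_{\min}-\big({\rm Lip}(A)+{\rm Lip}(B)\big)\delta_0\ge \tfrac34 |A-B|_{\min}.
\]
Hence the $\delta$-neighbourhoods $\{|M-A(\psi(s))|<\delta\}$ and $\{|M-B(\psi(t))|<\delta\}$ are disjoint and lie a fixed distance $c_0>0$ apart, uniformly in $s,t$. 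In particular $T_A\cap T_B=\emptyset$.

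Suppose neither alternative in the statement holds. Then, after possibly exchanging the roles of $A$ and $B$, there are times $t_1<t_2<t_3$ with $t_1,t_3\in T_A$ and $t_2\in T_B$. The curve therefore travels from near $A(\psi(t_1))$ to near $B(\psi(t_2))$ and back to near $A(\psi(t_3))$. Along each leg it must cross the annular region $\{\delta\le \dist(\Phi,\{A(\psi),B(\psi)\})\}$, on which $\Wtrunccc(\psi,\cdot)\ge c_1>0$ by \ref{H2} and the definition \eqref{frakdef}. Using the lower bound $\Wtrunccc\ge\min\{c\,\dist^2,\alpha_*\}$, the energy on $[t_1,t_3]$ is at least a constant $c_2>0$ depending only on $A$, $B$, $W$, and not on $\delta$ once $\delta$ is small. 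Explicitly, by a Modica--Mortola type estimate, each crossing from the $\delta$-neighbourhood of $A$ out to distance $c_0/2$ costs at least $\int_\delta^{c_0/2}\sqrt{c}\,r\,dr$, which is bounded below uniformly for $\delta\le\delta_1$.

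For the competitor, we replace $\Phi|_{[t_1,t_3]}$ by a path that goes straight from $\Phi(t_1)$ to $A(\psi(t_1))$, then follows $s\mapsto A(\psi(s))$ for $s\in[t_1,t_3]$ (after reparametrisation), and finally goes straight to $\Phi(t_3)$. This path stays inside the $\delta$-tube around the $A$-branch. Its energy is bounded by two pieces. The straight segments cost at most $C\delta^2$, since $\sqrt{\Wtrunccc}\le C\delta$ there and their length is at most $\delta$. The middle piece costs at most
\[
\int_{t_1}^{t_3} \sqrt{\Wtrunccc(\psi(s),A(\psi(s)))}\,\big|\tfrac{d}{ds}A(\psi(s))\big|\,ds=0,
\]
because $\Wtrunccc(\psi,A(\psi))=0$. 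The one point to watch is that the potential is evaluated at the time-dependent point $\psi(s)$ along this branch; this is exactly why we follow $A(\psi(s))$ rather than a fixed matrix.

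Since $\Wtrunccc(\psi(t),\cdot)$ vanishes along this branch, the competitor has energy at most $C\delta^2$. By Lemma \ref{geodesics-are-local-geodesics} applied on $[t_1,t_3]$, this gives $c_2\le C\delta^2+\eps$. Choosing $\delta_1$ with $C\delta_1^2<c_2/2$ and $\eps_1=c_2/2$ yields the contradiction.

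The main obstacle is making the lower bound $c_2$ uniform. The crossings have to be measured with the moving potential $\Wtrunccc(\psi(t),\cdot)$, and the curve might partially exploit the motion of the wells. The way to handle this is to bound $\sqrt{\Wtrunccc(\psi,\Phi)}\ge \sqrt c\,\min\{\dist(\Phi,\{A(\psi),B(\psi)\}),1\}$, where the constant $1$ stands in for $\sqrt{\alpha_*}$. We then estimate the length of the traversal through the annulus via $|\Phi'|$ directly. The wells move at most $\mathrm{Lip}\cdot\delta_0\le c_0/4$ over the whole time interval, so the curve still has to cover a Euclidean distance comparable to $c_0$ inside the region where the weight is bounded below.
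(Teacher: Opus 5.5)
Your strategy matches the paper's. You argue by contradiction from an $A,B,A$ triple $t_1<t_2<t_3$, restrict to $[t_1,t_3]$ via Lemma \ref{geodesics-are-local-geodesics}, build a cheap competitor that rides along the moving well $A(\psi(t))$, and get a $\delta$-independent lower bound from the separation of the moving wells. Two steps, however, fail as you wrote them.

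\emph{The competitor.} A path in ${\rm Paths}(\Phi(t_1),\Phi(t_3),t_1,t_3)$ is weighted by $\sqrt{\mathfrak{W}(\psi(t),\cdot)}$ at the \emph{same} time $t$. Once the two straight segments consume some time, the reparametrised middle piece sits at $A(\psi(\sigma(t)))$ with $\sigma(t)\neq t$. Its cost is therefore not $\int\sqrt{\mathfrak{W}(\psi,A(\psi))}\,|(A\circ\psi)'|=0$: the reparametrisation destroys exactly the synchronisation you flag as ``the one point to watch''. You can repair this by letting the segment durations $\tau\to0$; uniform continuity of $\psi$ then makes the middle cost vanish in the limit. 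The clean fix is the paper's: put the correction into an offset, $\Theta(t)=\frac{t_3-t}{t_3-t_1}P_1+\frac{t-t_1}{t_3-t_1}P_3+A(\psi(t))$ with $P_i=\Phi(t_i)-A(\psi(t_i))$. Then $|\Theta(t)-A(\psi(t))|<\delta$ at every time, and the cost is at most $C\delta\big(|P_3-P_1|+\int_{t_1}^{t_3}|(A\circ\psi)'|\big)\le C\delta$, with $C$ depending on $\|\psi'\|_{L^1}$. If you let the offsets decay to $0$ on short initial and final intervals instead, you recover your $O(\delta^2)$ bound, independent of $\psi$. Either bound suffices here, since $\psi$ is fixed.

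\emph{The lower bound.} The ``Modica--Mortola'' crossing estimate $\int_\delta^{c_0/2}\sqrt c\,r\,dr$ is not valid with moving wells. The function $r(t)=|\Phi(t)-A(\psi(t))|$ only satisfies $|r'|\le|\Phi'|+|(A\circ\psi)'|$, and the error term is not small. Drop it and use what you sketch in your last paragraph, which is precisely the paper's argument. Outside the enlarged neighbourhoods $B_{\delta_0}(A(\psi([-1,1])))$ and $B_{\delta_0}(B(\psi([-1,1])))$, the weight $\sqrt{\mathfrak{W}(\psi(t),\Phi(t))}$ is bounded below by a fixed constant, using \ref{H2} and $\alpha_*\ge1$. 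Because ${\rm osc}(\psi)<\delta_0$, these two sets are at distance at least $\frac34\min_{\overline\Omega}|A-B|-2\delta_0\ge\frac14\min_{\overline\Omega}|A-B|$. Each leg $[t_1,t_2]$ and $[t_2,t_3]$ must cross this gap, which gives the uniform lower bound.
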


\begin{proof}
     We argue by contradiction. Let $\eps_1 > 0$ and $\delta_1 \in (0, \delta_0)$ be fixed, chosen sufficiently small at the end of the proof. Let $\Phi \in {\rm Paths}(M,N)$ be such that  \eqref{neqeq} holds  for some $\eps \in (0, \eps_1)$.
     Suppose without restriction that there are $t_1 < t_2 < t_3$ such that for $\delta \in (0, \delta_1)$ 
    \begin{equation}\label{eqn:contraHyp}   
        \text{  $\Phi(t_1) \in B_\delta(A( \psi(t_1)))$, \quad  $\Phi(t_2) \in B_\delta(B(\psi(t_2)))$, \quad  and $\Phi(t_3)  \in B_\delta(A(\psi (t_3))$.}
    \end{equation}
    Then, by  Lemma \ref{geodesics-are-local-geodesics}
    \begin{align}\label{combini}
        \int_{t_1}^{t_3} \Wtruncc(\psi, \Phi)|\Phi'|\,dt \le  d_\Wtrunccc(\psi, \Phi(t_1), \Phi(t_3),t_1,t_3) + \eps.
    \end{align}
    We use a straight line as a competitor to estimate the geodesic distance. To this end, we define $ P_i :=   \Phi(t_i)  -  A(\psi(t_i))$ for $i=1,3$, and note that $|P_i| < \delta$. We   let  $\Theta$ be the straight line from $P_1$ to $P_3$ corrected by $A(\psi)$, i.e.,
    \[
        \Theta(t) := \frac{t_3 - t}{t_3 - t_1}P_1 + \frac{t - t_1}{t_3 - t_1}P_3 + A(\psi(t)) \quad \text{for } t\in [t_1,t_3].
    \]
     Notice that, due to the quadratic growth in \ref{H2},   we derive
    \begin{equation}\nonumber
            \int_{t_1}^{t_3} { \Wtruncc(\psi, \Theta)}|\Theta'| \,dt \leq  C\delta \left( (|P_3 - P_1| + \int_{t_1}^{t_3}|(A \circ \psi)'| \,dt\right)  \leq C\delta_1,
    \end{equation}
where  we simply use $\delta<\delta_1$ and  the constant depends on ${\rm Lip}(A)$ and $\psi$. Combining with \eqref{combini}
   yields  
   \begin{equation}\label{eqn:competit1}
        \int_{t_1}^{t_3} \Wtruncc(\psi, \Phi)|\Phi'|\,dt \le  C\delta_1   + \eps_1.
   \end{equation}  
   On the other hand, since $\psi \in {\rm Curv}$, see \eqref{admissable-control},  and $\delta< \delta_0$ (cf.\ \eqref{eq:definition_delta0})  we have
   \[
        |A(\psi(t)) - B(\psi(s))| > \frac{3}{4}\min_{x \in \overline \Omega}|A(x) - B(x)|
   \]
   for all $s,t \in [-1,1]$. Consequently, defining 
   \[
        c_0 := \min \big\{ W(x,M)\colon x \in \Omega, M \in  \big(B_{\delta_0}(A(x)) \cup B_{\delta_0}(B(x))\big)^c \big\} > 0,  
    \]
    we use  $\delta< \delta_0$  to find 
    \begin{align*}
        \int_{t_1}^{t_3} \Wtruncc(\psi, \Phi)|\Phi'|\,dt &= \int_{t_1}^{t_2} \Wtruncc(\psi, \Phi)|\Phi'|\,dt + \int_{t_2}^{t_3} \Wtruncc(\psi, \Phi)|\Phi'|\,dt \\
        &\geq 2c_0\dist\Big(B_{\delta_0}\big(A( \psi([-1,1]))\big), B_{\delta_0}\big(B(\psi ([-1,1])) \big) \Big) \\
        &\geq c_0\left( \frac{3}{4}\min_{x \in \overline\Omega}|A(x) - B(x)| - 2\delta_0 \right) \\
        &\geq \frac{c_0}{4}\min_{x \in \overline \Omega}|A(x) - B(x)|,
    \end{align*}
    where in the first inequality we used that \eqref{eqn:contraHyp} implies that on $[t_1,t_2]$ the curve $\Phi$ travels (at least) from $\partial B_{\delta_0}(A(\psi(t_1)))$ to $\partial B_{\delta_0}(B(\psi(t_2)))$ (likewise on $[t_2,t_3]$).
    Comparing this with \eqref{eqn:competit1}, we find that $\frac{c_0}{4}\min_{x \in  \overline\Omega}|A(x) - B(x)| \leq C\delta_1   + \eps_1.$ 
    Thus, if $\delta_1 > 0$ and $\eps_1 > 0$ are chosen sufficiently small, we obtain a contradiction, with which we conclude.
\end{proof}
As a final preparation for the proof of Theorem \ref{existence-of-geodesics-with-bounded-path-length}, let us choose  a  parameter $\delta \in (0,  \delta_1 )$, with $\delta_1$ defined in Lemma \ref{lemma:admissibility_lemma}, small enough such that we have
\begin{align}\label{Ma1}
\delta \le \frac{1}{4}\min_{x\in \overline{\Omega}}\textrm{dist}\,(A(x),B(x)),
\end{align}
as well as
\begin{align}\label{Ma2}
\Wtrunccc(x,M) = W(x,M) \quad  \text{for all $x\in \Omega$ and $M \in B_{\delta}(A(x))\cup B_{\delta}(B(x))$   }
\end{align}
see \ref{H2} and the definition of $\Wtrunccc$ in \eqref{frakdef}.

\begin{proof}[Proof of Theorem \ref{existence-of-geodesics-with-bounded-path-length}]
    Let $\{ \tilde \Phi_m \}_m$ be a sequence such that
  \begin{align}\label{tildephi}
       \int_{-1}^1 \Wtruncc(\psi, \tilde \Phi_m)|\tilde \Phi_m'| \,dt \leq d_\Wtrunccc(x, M, N) + \frac{1}{2^{2m}} .
    \end{align}
    Throughout  the proof, $m$ will be fixed. We will first modify  $\tilde \Phi_m$, and then we estimate the geodesic and Euclidian length of the resulting path. As a preparation, for $L \in \lbrace A,B\rbrace$ and  $k \in \N_{\rm 0}$, we define the \emph{$k$-th annulus centered at $L$} as  
 
\[
        \mathcal{A}^{L,k}  := \left\{ (x, M) \in \Omega \times \R^{d \times d}: \frac{\delta}{2^{k+1}}   \leq |M - L(x)| <  \frac{\delta}{2^{k}}   \right\},
    \]
    where $\delta$ is fixed in \eqref{Ma1}--\eqref{Ma2}.  This ensures $\mathcal{A}^{A,k} \cap \mathcal{A}^{B,k'} = \emptyset$ for all $k,k'\in \N_0 $  by \eqref{Ma1}.   Now, for   $L \in \{A,B\}$   we set 
    \[
        t_0^L := \inf \, (\psi, \tilde \Phi_m)^{-1} \left(\bigcup\nolimits_{k \geq m}   \mathcal{A}^{L,k}   \right) \quad \text{ and } \quad
        t_1^L := \sup \, (\psi, \tilde \Phi_m)^{-1} \left(\bigcup\nolimits_{k \geq m}   \mathcal{A}^{L,k}  \right).
    \]
    Here, $t_0^L$ and $t_1^L$ are the first and the last time, respectively, such that  the pair $(\psi, \tilde \Phi_m)$ satisfies $|\tilde \Phi_m(t) - L(\psi(t))|\leq 2^{-m} \delta $. 
    For convenience, we can assume that these values are well-defined. (If $(\psi, \tilde \Phi_m)$ never takes values in $  \mathcal{A}^{L,k}$, $k \ge m$, the following analysis can be simplified.)  By Lemma \ref{lemma:admissibility_lemma} we get that  $[t_0^A,t_1^A]$ and $[t_0^B,t_1^B]$ are disjoint intervals. Without loss of generality, we may assume $ t_1^A  \leq t_0^B$. 
    
    The idea is now to first substitute $\tilde \Phi_m$ with a (perturbed)  `straight path' in  $\bigcup_{k \geq m}   \mathcal{A}^{L,k}$, $L \in \lbrace A,B \rbrace$, whereby increasing the Euclidean path length and  the $\Wtrunccc$-geodesic-length only by a small amount (Step~1). Then, on each $   \mathcal{A}^{L,k}  $, $k<m,$ we control the length of $ \tilde \Phi_m$ in this annulus by considering a suitable competitor  (Step~2). Lastly, we estimate the path length of the remaining path outside of all annuli (Step~3).  
     
    \emph{Step 1 (Straight paths modifications close the wells).} Consider the curve 
\[
        \Phi_m := \begin{cases}
            \Theta_{A,m} & \text{ in } [t_0^A,t_1^A], \\
            \Theta_{B,m} & \text{ in } [t_0^B,t_1^B], \\
            \tilde \Phi_m & \text{ otherwise, }
        \end{cases}
    \]
    where $\Theta_{A,m}$ and $\Theta_{B,m}$ are defined as follows:
    Let   $ P_i := \tilde \Phi_m(t_i^A)  -  A(\psi(t^A_i))$ for $i=0,1$, and note that $|P_i|\leq   2^{-m} \delta$ by  the definition of $t_i^A$. Similarly as in the proof of Lemma \ref{lemma:admissibility_lemma}, we define $\Theta_{A,m}$ as the straight line from $P_0$ to $P_1$ corrected by $A(\psi)$, i.e.,
    \begin{align}\label{similarcon}
        \Theta_{A,m}(t) := \frac{t_1^A - t}{t_1^A - t_0^A}P_0 + \frac{t - t_0^A}{t_1^A - t_0^A}P_1 + A(\psi(t)).
    \end{align}
    We define    $\Theta_{B,m}$ analogously. 
    Notice that, due to the quadratic growth in \ref{H2}, we derive
    \begin{align} \label{similarargument}
        \int_{t_0^A}^{t_1^A} { \Wtruncc(\psi, \Theta_{A,m})}|\Theta_{A,m}'| \,dt &\leq \frac{C\delta}{ 2^{m}}\left( (|P_0 - P_1| + \int_{t_0^A}^{t_1^A}|(A \circ \psi)'| \,dt\right) \notag \\
        &\leq \frac{C}{2^{m}} \Big(1 + \max\{\text{Lip}(A) \} \int_{t_0^A}^{t_1^A}  |\psi'| \, dt\Big),
    \end{align}
    where we absorbed $\delta$ in the constant $C$.
  An analogous estimate holds for $\Theta_{B,m}$. This shows that the above substitution only increases the geodesic length by a small amount. 
    In a similar fashion, we get a  bound on the Euclidean path length of $\Theta_{A,m}$ and $\Theta_{B,m}$, namely  
    \begin{equation}\label{eqn:thetaBound}
        \int_{t_0^A}^{t_1^A}|\Theta_{A,m}'| \,dt  +  \int_{t_0^B}^{t_1^B}|\Theta_{B,m}'| \,dt \leq C\Big(1 + \text{Lip}(A),  \int_{t_0^A}^{t_1^A}  |\psi'| \, dt + \text{Lip}(B) \int_{t_0^B}^{t_1^B}  |\psi'| \, dt   \Big) .
    \end{equation}

    \emph{Step 2 (Path length in annuli $\mathcal{A}^{A,k}$ for $k<m$).} 
    We now want to estimate the path length of $\Phi_m$ in the interval $[-1, t^A_0]$. Note that on this set it holds  $\Phi_m = \tilde \Phi_m$. Without loss of generality we can assume $t^A_0 > -1$ as otherwise the path length in $[-1, t^A_0]$ is clearly zero.
    We proceed by inductively defining a decreasing sequence $s_1> s_2 > \cdots > s_{k_0}$ in the following way: Set
    \[
        s_0 := t^A_0,
    \]
   and
    \[
        s_{k} := 
            \inf \, \Big[ [-1, s_{k - 1}] \cap (\psi, \Phi_m)^{-1}(\mathcal{A}^{A, m - k}) \Big]
    \]
   for every $k\in \mathbb{N}$ such that the above number is  not $+\infty$. In   particular, among the indices $k\leq m$ we denote the largest of these indices by $k_0$ (which makes $s_{k_0}$ as small as possible). We stress that  $k_0 < m$ is possible. In this case, we  have $s_{k_0} = -1$. Indeed, if   $s_{k_0} > -1$,  we would have $[-1, s_{k_0}] \cap (\psi, \Phi_m)^{-1}(\mathcal{A}^{A, m - (k_0+1)}) \neq \emptyset$
   by the continuity of $(\psi, \Phi_m)$.

    Similarly as before, we define  $  P^k_0 =   \Phi_m(s_{k+1}) - A(\psi(s_{k+1}))$ and $ P^k_1 =  \Phi_m(s_{k}) -  A(\psi(s_{k}))$. Note that $|P^k_i| \le  2^{-m+k+1}$ for $i=0,1$. Let $ \Gamma_k$ be the straight line connecting $P^k_0$ and $P^k_1$ as in \eqref{similarargument}.  Define 
    \begin{equation}\label{eqn:annPaths}
         \Psi_m := \Gamma_k + A \circ \psi \quad  \text{ on } \quad [s_{k+1}, s_k] \ \ \ \text{for } 0 \le k \le k_0 -1,
    \end{equation}
    see \eqref{similarcon} for a similar construction. 
    We use $\Psi_m$ as a competitor to estimate the geodesic length and Euclidean length of $\Phi_m$. Notice   that by the definition of $s_k$ and $s_{k+1}$ we have 
    \[
        |\Phi_m  - A(\psi(\cdot))| \ge \frac{\delta}{ 2^{m-k-1}}  \ \ \text{ on } [s_{k + 1}, s_{k}] \ \ \ \text{for } 0 \le k \le k_0 -1. 
    \]
    By \ref{H2} and \eqref{Ma2} this yields 
    \begin{equation}\label{eqn:annuliCoercive}
        \Wtruncc(\psi,  \Phi_m )|_{[s_{k + 1}, s_{k}]} \geq \frac{\delta}{\sqrt{C_*} 2^{m - k - 1}}.
    \end{equation}
    In view of Lemma \ref{geodesics-are-local-geodesics} and \eqref{tildephi},  for every $0 \leq k \le k_0 - 1$ we have that
    \[
        \int_{s_{k + 1}}^{s_{k}} \Wtruncc(\psi,  \Phi_m  )| \Phi_m ' | \,dt \leq d_\Wtrunccc\left(\psi,  \Phi_m  (s_{k + 1}),  \Phi_m  (s_{k}), s_{k + 1}, s_{k }\right) + \frac{1}{2^{2m}}.
    \]
    Applying \eqref{eqn:annuliCoercive}, we estimate
    \begin{align*}
        \frac{\delta}{ \sqrt{C_*} 2^{m - k - 1}} \int_{s_{k + 1}}^{s_{k}}| \Phi_m ' |\, dt 
        &\leq \int_{s_{k + 1}}^{s_{k}} \Wtruncc(\psi,  \Phi_m  )| \Phi_m ' |\, dt \\
        &\leq d_\Wtrunccc \left(\psi,  \Phi_m  (s_{k + 1}),  \Phi_m  (s_{k}), s_{k + 1}, s_{k }\right) + \frac{1}{2^{2m}} \\
        &\leq \int_{s_{k + 1}}^{s_{k}}\Wtruncc(\psi,   \Psi_m  )|  \Psi_m' |\, dt + \frac{1}{2^{2(m - k)}},
    \end{align*}
    where in the last line we used $\Psi_m$ as a competitor in the definition of geodesic distance and $2^{-2m} \leq 2^{-2(m - k)}$ for later convenience.
    In view of \ref{H2}, \eqref{eqn:annPaths}, and the fact that   $|P^k_i| \le  2^{-m+k+1}$ for $i=0,1$  we also find 
    \begin{align*}
        \int_{s_{k + 1}}^{s_{k}} \Wtruncc(\psi,   \Psi_m  )|  \Psi_m ' |\, dt & \leq  \sqrt{C_*}  \int_{s_{k + 1}}^{s_{k}}| \Psi_m  -A(\psi)|| \Psi_m ' |\, dt   \leq   \sqrt{C_*}   \frac{\delta}{2^{m - k + 1}}\int_{s_{k + 1}}^{s_{k}}|  \Psi_m ' |\, dt \\
        &\leq \frac{C}{2^{m-k}}\left(\frac{1}{2^{m - k}} +    \int_{s_{k + 1}}^{s_{k}}   |\psi'| \, dt \right),
    \end{align*}
    where $C$  denotes a generic constant, also depending on $\text{Lip}(A)$ and $\delta$, see also \eqref{similarargument} for a similar calculation.  
    By combining the above inequalities we infer 
    \[
        \int_{s_{k+1}}^{s_k}| \Phi_m' |\, dt \leq C\left(\frac{1}{2^{m - k}} +  \int_{s_{k + 1}}^{s_{k}}  |\psi'| \, dt \right).
    \]
    Eventually, summing over the intervals $[s_{k+1},s_k]$ for $0 \le k \le k_0-1$, recalling $k_0 \le m$,  we obtain the estimate
    \begin{align}\label{gather1}
        \int_{s_{k_0}}^{s_{0}} | \Phi_m' | \, dt &\leq C\Big(1 + \int_{s_{k_0}}^{s_{0}} |\psi'| \, dt  \Big).
    \end{align}

   \emph{Step 3 (Estimating the path length of $\Phi_m$ on the remaining intervals).} Let us now estimate the path length of $\Phi_m$ on  ${[-1, s_{k_0}]}$.    As already observed, if $k_0 < m$, then $s_{k_0} = -1$ and there is nothing to prove. We thus consider   $k_0  =  m$ and $s_{m}>0$.   By repeating the argument in \eqref   {eqn:annuliCoercive}
    we get $\Wtruncc(\psi,  \Phi_m )|_{[0, s_{m}]} \geq c_0$ for some constant $c_0$ depending on $\delta$ and $C_*$.  Thus, we get  by Lemma \ref{geodesics-are-local-geodesics} and \eqref{tildephi} 
    \begin{align*}
        c_0\int_{-1}^{s_{m}} | \Phi_m ' | \, dt &\leq \int_{-1}^{s_m} \Wtruncc(\psi,  \Phi_m )| \Phi_m ' |\, dt  \leq d_\Wtrunccc \left(\psi,  \Phi_m  (-1),  \Phi_m  (s_{m}), -1, s_{m}\right) + \frac{1}{2^{2m}}.  
    \end{align*}
We observe that $\Phi_m  (-1) = M$ and that by construction $|\Phi_m  (s_m)| \le \delta + \Vert A \Vert_\infty$. Using the straight line between $\Phi_m  (-1)$ and $\Phi_m   (s_m)$ as competitor we get, by \eqref{frakdef} and \ref{H2},
\begin{align*}
        c_0 \int_{-1}^{s_{m}}  | \Phi_m ' | \, dt  
        &\leq d_\Wtrunccc \left(\psi,  \Phi_m  (-1),  \Phi_m (s_{m}), -1, s_{m}\right) + \frac{1}{2^{2m}} \\ & \le C\big( 1 + \min\lbrace |M|, \sqrt{\alpha_*} |M|^{\beta_*}\rbrace \big) |M -  \Phi_m  (s_m)| + \frac{1}{2^{2m}} \\
        & \le  C\big( 1 + \min\lbrace |M|,  {\alpha_*} |M|^{\beta_*}\rbrace \big) (1+ |M|)   \le C \min\lbrace  1+|M|^{1-\beta_*}, \alpha_*  \rbrace (1 + |M|^{\beta_* +1}),
    \end{align*}
where $C>0$  also depends on $\delta$ and     $\Vert A \Vert_\infty$.   
 Combining this with \eqref{eqn:thetaBound} and \eqref{gather1}, we   derive 
    \[
        \int_{-1}^{t_1^A} | \Phi_m'| \, dt \leq C\Big(1 + \int_{-1}^{1}|\psi'| \, dt  \Big) + C \min\lbrace 1 +  |M|^{1-\beta_*}, \alpha_*  \rbrace (1 + |M|^{\beta_* +1}).
    \]
    In a completely analogous way, one can estimate the remaining path length on $[t_1^B,1]$ and $[t_1^A,t_0^B]$.  With this, we conclude \eqref {eqn:geoLengthBound}, where the constant $C$ also depends on $\|\psi'\|_{L^1(-1,1)}$.
\end{proof}

\section*{Acknowledgements}
The research of E.D.\ was supported by the Austrian Science Fund (FWF) through the projects  \href{https://www.doi.org/10.55776/Y1292}{10.55776/Y1292}, \href{https://www.doi.org/10.55776/P35359}{10.55776/P35359}, and \href{https://www.doi.org/10.55776/F100800}{10.55776/F100800}.  
K.S. was supported by funding from the NSF (USA) RTG grant DMS-2136198.
For open-access purposes, the authors have applied a CC BY public copyright
license to any author-accepted manuscript version arising from this
submission.

\bibliographystyle{abbrv} 
\bibliography{references.bib}

\end{document}